\DeclareMathOperator*{\esssup}{ess\,sup}
\declaretheorem[name=Proposition,numberwithin=section]{proposition}
\declaretheorem[name=Lemma,numberwithin=section]{lemma}
\declaretheorem[name=Theorem,numberwithin=section]{theorem}
\declaretheorem[name=Assumption]{assumption}
\newcounter{subassumption}
\newcommand\vard{d}
\newcommand\varM{M}
\title{\textbf{Opinion Dynamics with Continuous Age Structure}}
\author[1,2]{Andrew Nugent}
\author[2]{Susana N. Gomes}
\author[2]{Marie-Therese Wolfram}
\affil[1]{MathSys CDT, University of Warwick \protect\\ Email: \texttt{a.nugent@warwick.ac.uk}}
\affil[2]{Mathematics Institute, University of Warwick}
\begin{document}

\maketitle

\begin{abstract}
We extend a classical model of continuous opinion formation to explicitly include an age-structured population. We begin by considering a stochastic differential equation model which incorporates ageing dynamics and birth/death processes, in a bounded confidence type opinion formation model. We then derive and analyse the corresponding mean field partial differential equation and compare the complex dynamics on the microscopic and macroscopic levels using numerical simulations. We rigorously prove the existence of stationary states in the mean field model, but also demonstrate that these stationary states are not necessarily unique. Finally we establish connections between this and other existing models in various scenarios. 
\end{abstract}

\textbf{Mathematics Subject Classification:}

\textbf{35Q91} PDEs in connection with game theory, economics, social and behavioral sciences

\textbf{91D30} Social networks; opinion dynamics

\textbf{91C20} Clustering in the social and behavioral sciences

\textbf{Highlights:}
\renewcommand{\labelitemi}{{\tiny$\bullet$}}
\begin{itemize}
\itemsep-0.5em
    \item Introduce an individual-based model for opinion dynamics with an explicit continuous age structure.
    \item Derive the mean-field partial differential equation and provide examples of novel complex dynamics. 
    \item Rigorously prove the existence of steady states and discuss conditions for their uniqueness. 
    \item Establish connections with several existing models in various scenarios. 
\end{itemize}

\hrulefill

\section{Introduction} \label{Section: Introduction}

Opinion dynamics modelling aims to capture the mechanisms through which a population of individuals form and update their opinions, and answer questions about if, and on what scale, agreement emerges. The majority of models focus on consensus dynamics, in which interactions between individuals cause their opinions to move closer together. This raises the key question, initially posed by \cite{axelrod1997dissemination}, of why consensus is not more prevalent. 

One commonly accepted answer is that individuals have bounded confidence, meaning that they are only willing to interact with those who already share a sufficiently similar opinion. The models of Hegselmann and Krause \citep{hegselmann2002opinion} and Deffuant and Weisbuch \citep{deffuant2000mixing} both introduce this effect and the resulting clustering of opinions has been well studied in agent-based models \citep{lorenz2007continuous,hegselmann2015opinion,proskurnikov2018tutorial,blondel20072r}, ordinary and stochastic differential equation models for individuals opinions \citep{blondel2010continuous,motsch2014heterophilious,nugent2024bridging}, and partial differential equation models describing population-level opinion distributions \citep{garnier2017consensus,wang2017noisy,goddard2022noisy}. Furthermore, other contributing factors such as the presence of leaders or stubborn individuals \citep{during2015opinion,zhao2016bounded}, the impact of network structure \citep{amblard2004role,gabbay2007effects,kan2023adaptive,nugent2023evolving}, and the interactions between opinions on multiple topics \citep{jacobmeier2005multidimensional,fortunato2005vector,rodriguez2016collective} have been studied. 

An important effect is the addition of noise, with the simplest approach being to add a small amount of noise to each interaction, causing opinion diffusion \citep{garnier2017consensus,pineda2011diffusing}. Alternatively noise may be added to the opinions communicated between individuals or to the size of opinion updates \citep{steiglechner2024noise,nugent2024bridging}, or as `events' that may affect the entire population \citep{condie2021stochastic}. In 2009 \cite{pineda2009noisy} introduced `free will in the form of noisy perturbations', meaning that individuals opinions were occasionally randomly updated to a new randomly selected opinion. With this type of noise the opinion formation process continues to be dynamic, rather than remaining in the typically observed clusters. A similar approach has been taken by \cite{carletti2008birth} and \cite{grauwin2012opinion} in which this effect is described specifically as modelling the death/exit and birth/entry of individuals into the population. 

In this paper we model the effect of ageing more explicitly, defining an age for each individual and establishing at the microscopic (individual) level the mechanism through which individuals exit and re-enter the population with new opinions. This also allows us to incorporate the impact of individuals' ages on the way they interact with others. In addition, we look at the macroscopic (population) level by describing the evolution of the joint density over age and opinion. This approach is closer to the way that births and deaths are typically included in population dynamics and certain epidemiological models (see for example \cite{inaba2017age,de2023approximating,keyfitz1997mckendrick,perthame2006transport}) where individuals' ages are included explicitly in the model rather than being viewed solely as a source of noise. 

It is worth noting that this approach is very different from including a memory-dependent transition rate between opinion states, as in e.g. \cite{stark2008decelerating,llabres2024aging}, in which context an individuals `age' reflects the length of time they have held their current opinion rather than the biological age considered here. 

We begin by introducing the microscopic model in Section \ref{Section: Microscopic Model} and providing examples of interesting behaviours made possible by introducing continuous age structure. In Section \ref{Section: Macroscopic Model} we derive the macroscopic PDE model and provide examples of the corresponding behaviours. We then focus on the steady states of the macroscopic model in Section \ref{Section: steady states}, exploiting a connection with the classical mean-field model to show their existence, propose an efficient method to find them, and discuss when they are unique. In Section \ref{Section: PDE dynamics} we explore properties of the macroscopic system and in Section \ref{Section: Connection to other models} examine the connection with other existing models, before concluding and discussing future directions in Section \ref{Section: Conclusion}.

\section{Microscopic Model}  \label{Section: Microscopic Model}

Consider a finite population of $N$ individuals. The state of individual $i$ is described by the pair $(a_i,x_i)$ where $a_i\in[0,A)$ is the age of individual $i$ and $x_i\in U$ is their opinion. Throughout this paper we typically normalise to $A=1$ and consider $U = (-1,1)$ to represent the level of (dis)agreement with some statement, but the model could be rescaled for any bounded interval $U\subset\mathds{R}$ and $A>0$. The initial age of each individual is chosen uniformly at random in $[0,A)$ and the initial opinions are chosen randomly according to some given distribution $\rho_0$. These states evolve according to 
\begin{subequations} \label{Eqn: SDE model}
\begin{align}
    da_i &= \tau \, dt \,,\\
    dx_i &= \Bigg( \frac{1}{N} \sum_{j=1}^N \varM(a_i,a_j) \, \phi(x_j - x_i)\, (x_j - x_i) \Bigg) \, dt + \sigma \, d\beta_i & i=1,\dots,N\,,
\end{align}
\end{subequations}
except that when an individual reaches age $a_i = A$ their age is reset to $a_i = 0$ and a new opinion is chosen according to a given age-zero opinion distribution $\mu$. The interaction function $\phi:[-2,2]\rightarrow[0,1]$ is an odd function that describes the extent to which individuals interact based on the distance between their opinions. The age-interaction kernel $\varM:[0,A)^2 \rightarrow \mathds{R}_{\geq0}$ describes the strength/frequency of interactions between individuals of different ages, with the simplest case being $\varM\equiv1$ in which individuals' ages do not affect their interactions. The constant $\tau$ determines the timescale on which individuals age (relative to the change in their opinions). The constant $\sigma$ describes the amount of external noise affecting individuals opinions' and $\beta_i$ (for $i=1\dots,N$) are $N$ independent standard Brownian motions, modelling external influences on individuals' opinions. In general we are concerned with relatively small $\sigma$, so that the opinion dynamics is mostly driven by opinion interactions. We impose reflecting boundary conditions at the boundary of $U$, here $x=\pm1$. This model maintains a finite, fixed population size $N$.

\begin{assumption} \label{Assumption group: Standard}
    We make the following assumptions throughout:
    \begin{enumerate}[label={\textbf{\Alph*:}},ref={\theassumption\Alph*}]
        \item The interaction function $\phi \in C^2(\mathds{R})$. 
        \item The age-interaction kernel $\varM$ is Lipschitz continuous.
    \end{enumerate}
\end{assumption}

Under these Assumptions the system \eqref{Eqn: SDE model} is well-posed \citep{ikeda2014stochastic}. The age jump times are deterministic and can be determined from the initial conditions. As initial ages are selected uniformly at random in $[0,A)$ the probability of two individuals having the same age, and therefore jumping (resetting their opinions) at the same time, is zero. Between each jump the SDE system is well-posed as the vector field is Lipschitz continuous. Therefore a solution can be constructed by splitting the time interval according to the known jump times and piecing together the solution in each time interval. 

Note that due to the random replacement of individuals' opinions when they reach age $A$, this system cannot exhibit stationary states on the microscopic level. This provides one motivation for studying the mean-field model which, as will we see in Section \ref{Section: steady states}, exhibits possibly non-unique stationary states. 

This model could be altered by introducing stochastic death timing, meaning that each agent dies according to a Poisson process with a prescribed age-dependent death rate $d(a)$. This would allow a more realistic age distribution in the population. However, as is discussed in greater detail in Section \ref{Section: MK ageing}, a similar effect can be achieved by multiplying the age-interaction kernel $\varM(a,b)$ by the population-level age distribution resulting from this death rate. In fact, in the large-population limit these approaches are equivalent, provided that the death rate $d(a)$ ensures bounded ages, which is a realistic assumption in the context of opinion dynamics. 

In Figure \ref{fig:SDE examples} we provide a small number of examples that highlight several interesting features of the model \eqref{Eqn: SDE model}. These examples all take $\varM\equiv1$ and use a population of size $N=500$. The interaction function $\phi$ as well as $\sigma$ and $\tau$ are varied to show different behaviours, with the simulation length $T$ changed accordingly. We use a mollified version of the classical bounded confidence interaction function \citep{hegselmann2002opinion} so that $\phi$ satisfies Assumption \ref{Assumption group: Standard}. Specifying $r_2 > r_1 > 0 $ we take
\begin{align} \label{eqn: phi for examples}
    \phi(r) = 
    \begin{cases}
        1 & \text{ if } |r| < r_1 \,,\\
        \Tilde{\phi}(r) & \text{ if } r_1 \leq |r| \leq r_2 \,,\\
        0 & \text{ if } |r| > r_2 \,,
    \end{cases}
\end{align}
where $\Tilde{\phi}$ is a function that smoothly interpolates between $0$ and $1$.  

\begin{figure}
    \centering
    \begin{subfigure}{0.49\linewidth}
        \centering
        \includegraphics[width = \linewidth]{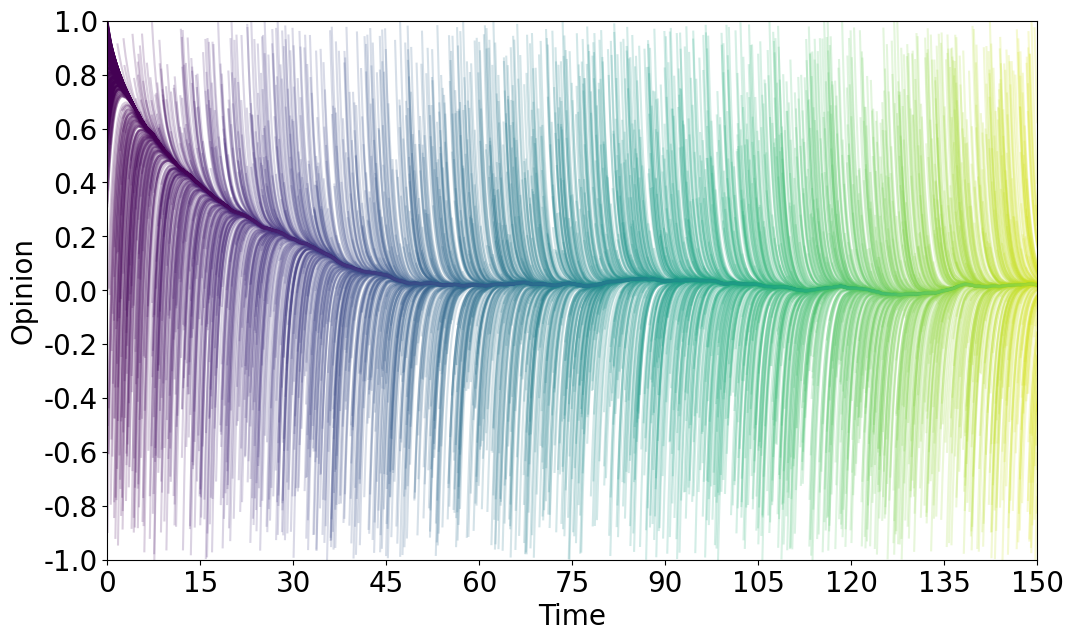}
        \caption{$\phi \equiv 1$, $\tau = 0.05$, $\sigma = 0.05$, $T = 150$.}
        \label{fig:SDE moving consensus}
    \end{subfigure}
    \begin{subfigure}{0.49\linewidth}
        \centering
        \includegraphics[width = \linewidth]{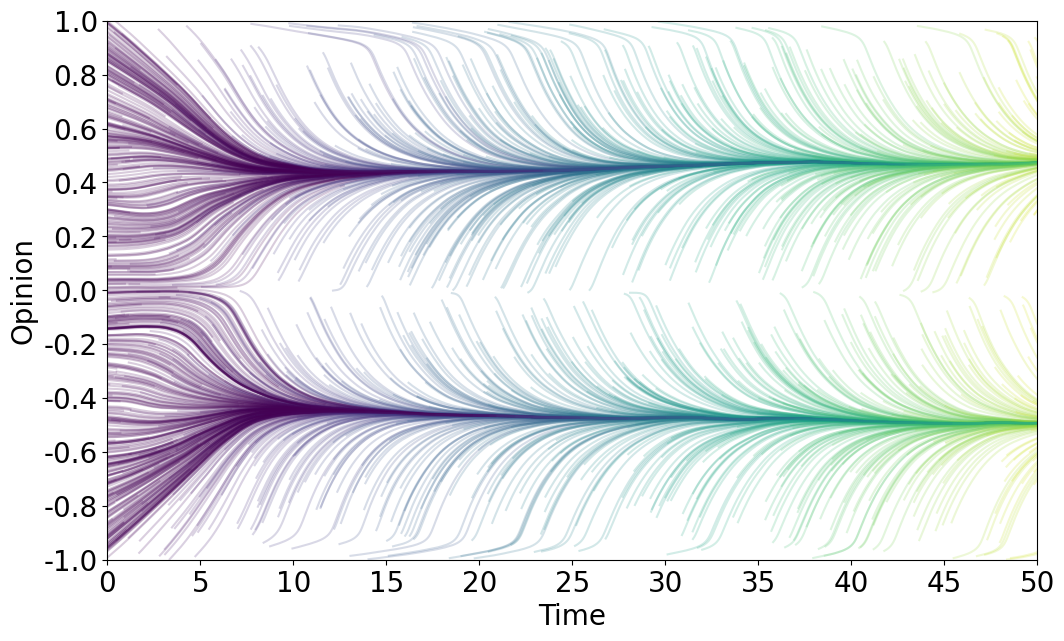}
        \caption{$r_1 = 0.4, r_2 = 0.5$, $\tau = 0.05$, $\sigma = 0.015$, $T = 50$.}
        \label{fig:SDE two clusters}
    \end{subfigure}
    \begin{subfigure}{0.49\linewidth}
        \centering
        \includegraphics[width = \linewidth]{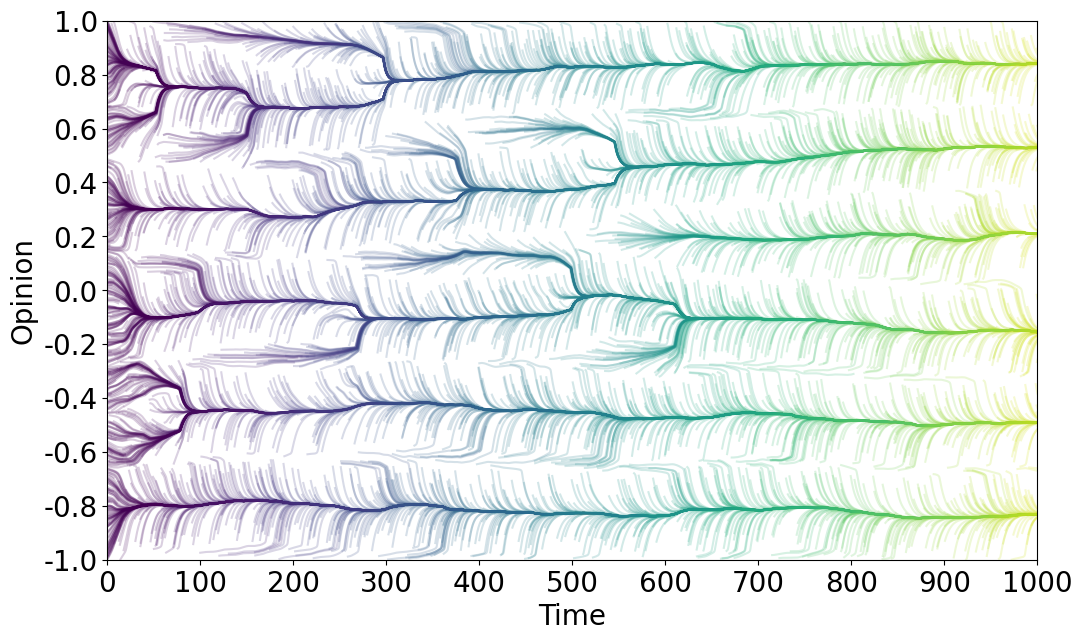}
        \caption{$r_1 = 0.14, r_2 = 0.16$, $\tau = 0.01$, $\sigma = 0.001$, $T = 1000$.}
        \label{fig:merging and emerging clusters}
    \end{subfigure}
    \begin{subfigure}{0.49\linewidth}
        \centering
        \includegraphics[width = \linewidth]{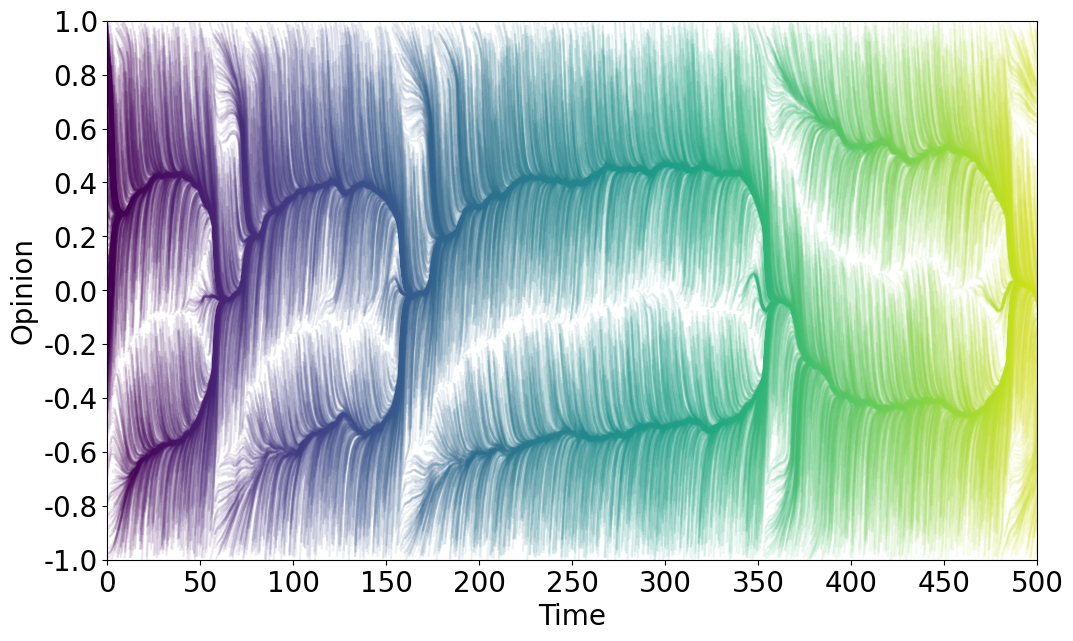}
        \caption{$r_1 = 0.48, r_2 = 0.58$, $\tau = 0.1$, $\sigma = 0.015$, $T = 500$.}
        \label{fig:SDE wiggles}
    \end{subfigure}
    \caption{Example numerical solutions to \eqref{Eqn: SDE model} using an Euler-Maruyama scheme with timestep $\Delta t = 0.01$. Individuals' opinion trajectories are plotted over time, with colour indicating the time at which individuals joined the population, with initially present individuals shown in purple. All examples use an interaction function $\phi$ given by \eqref{eqn: phi for examples}, $\varM \equiv 1$ and a uniform $\mu$. Figure \ref{fig:SDE two clusters} and Figure \ref{fig:merging and emerging clusters} also use a uniform $\rho_0$, while Figure \ref{fig:SDE moving consensus} uses an exponentially distributed $\rho_0$ \eqref{Eqn: positive skewed rho0} and Figure \ref{fig:SDE wiggles} uses a bimodal $\rho_0$ with peaks at $x=0$ and $x=-0.8$ \eqref{Eqn: bimodal rho0}. The values of parameters $r_1, r_2$, $\tau$ and $\sigma$ are varied to demonstrate different behaviours, with the final time $T$ changed accordingly.}
    \label{fig:SDE examples}
\end{figure}

Figures \ref{fig:SDE moving consensus} shows an example with $\phi\equiv1$, meaning all individuals interact with each other at all timepoints. The initial opinion distribution is given by 
\begin{align} \label{Eqn: positive skewed rho0}
    \rho_0(a,x) =  \kappa\, e^{-10\, (1-x)^2} \,,
\end{align}
where $\kappa$ is a normalisation constant. This means the initial distribution is heavily weighted towards positive opinions and indeed we observe in Figure \ref{fig:SDE moving consensus} that initially a consensus forms at a positive opinion. However, as new individuals enter the population with uniformly distributed opinions and quickly join this consensus, it slowly shifts to lie near $x=0$ with only minor deviations due to stochasticity.

In Figure \ref{fig:SDE two clusters} we consider a relatively large confidence bound with $r_2 = 0.5$ and a uniform $\rho_0$. As would be expected in the model without ageing this causes the population to split into two clusters. The bounded confidence radius of these two clusters partitions the opinion space, meaning each individual joining the population simply joins the nearest cluster and these are therefore stable over time. This indicates that, even though individuals may die there is the possibility of observing more structured macroscopically stable patterns. 

In Figure \ref{fig:merging and emerging clusters} the confidence bound is reduced to $r_2 = 0.16$ (again with a uniform $\rho_0$) and we observe many more smaller clusters. As these clusters are smaller and closer together there is significantly more merging of clusters due to stochasticity and individuals joining the population acting as `bridges' between nearby clusters, as seen in \cite{cahill2024modified}. In addition, new individuals entering the population provides the opportunity for new clusters to emerge, often at more extreme opinions or in the vacuum created following cluster merges. Towards the end of the simulation the population again settles into evenly spaced clusters. 

In contrast, Figure \ref{fig:SDE wiggles} shows a population that does not settle towards a macroscopically stationary pattern. Again we begin with a non-uniform initial opinion distribution, given by 
\begin{align} \label{Eqn: bimodal rho0}
    \rho_0(a,x) =  \kappa\, \big( e^{-10\, (0.8-x)^2} + e^{-10x^2} \big) \,,
\end{align}
where $\kappa$ is a normalisation constant. In this case the specific parameters chosen lead to the repeated formation of two clusters that then combine in a brief but unstable consensus before reforming again. The interaction function is similar to that in Figure \ref{fig:SDE two clusters} but the confidence bound has been increased slightly to $r_2 = 0.58, r_1 = 0.48$, allowing some interaction between the two clusters. Eventually they interact sufficiently to combine. However, the bounded confidence radius of the resulting consensus does not cover the whole opinion space (as it does in Figure \ref{fig:SDE moving consensus}) hence there is a `vacuum' in which new clusters form (as observed in Figure \ref{fig:merging and emerging clusters}). Therefore two new clusters emerge and the cycle begins again. Due to the stochasticity and random initial opinions of new individuals the opinion distribution is typically not symmetric, the consensus may form off-centre or one new cluster may merge into it, but the overall pattern appears to be stable. When considering the macroscopic PDE model this will translate into a predictable, periodic pattern. 

While the inclusion of births/deaths in the SDE model \eqref{Eqn: SDE model} creates various interesting behaviours it also creates new challenges in its analysis as the process no longer reaches a stationary distribution on the individual level. However, as we have observed in Figure \ref{fig:SDE examples} stability can emerge on the macroscopic level, hence we next consider a mean-field PDE model obtained in the large-population limit. 

\section{Macroscopic Model}  \label{Section: Macroscopic Model}

Motivated by the emergence of macroscopic behaviours observed in Section \ref{Section: Microscopic Model}, we would now like to obtain a mean-field limit of the form $\rho(t,a,x)$, which describes the density of individuals with age $a$ and opinion $x$ at time t. In particular we will be interested in the steady state behaviour of this model and the extent to which it mirrors the dynamics of the microscopic system \eqref{Eqn: SDE model}. 

We first set some notation to be used throughout this section. Define
\begin{equation} \label{Eqn: Definition of varphi}
    \varphi(y-x):=\phi(y-x)\,(y-x) \,.
\end{equation}
Note that since $\phi$ is odd the same is true of $\varphi$. In addition, $\varphi$ is also twice continuously differentiable. 

Recall that $A>0$ is the age at which individuals have their opinions reset, $U=(-1,1)$ is the opinion space and denote by $U_A = (0,A) \times U$ the joint age-opinion space. 

\subsection{Formal Derivation of mean-field limit} \label{Section: Formal Derivation}

We begin by considering the model without noise (i.e. $\sigma = 0$) and deriving a PDE satisfied by the empirical density, defined by
\begin{equation}
    \rho(t,a,x) = \frac{1}{N} \sum_{i=1}^N \delta_{a_i(t)}(a) \, \delta_{x_i(t)}(x) \,,
\end{equation}
where $\delta_{z}$ denotes a Dirac delta centred at $z$. Let $\zeta(a,x)$ be an infinitely differentiable, compactly supported test function. We formally compute 
\begin{align*}
    \int_U & \int_0^A \partial_t\rho(t,a,x) \,\zeta(a,x) \, da \, dx \\
    &= \frac{d}{dt} \int_U \int_0^A \frac{1}{N} \sum_{i=1}^N \delta_{a_i(t)}(a) \, \delta_{x_i(t)}(x) \, \zeta(a,x) \, da \, dx \\
    &= \frac{d}{dt} \frac{1}{N} \sum_{i=1}^N \zeta\big(a_i(t), x_i(t)\big) \\
    &= \frac{1}{N} \sum_{i=1}^N \partial_a \zeta\big(a_i(t), x_i(t)\big) \,\tau  + \frac{1}{N} \sum_{i=1}^N \partial_x \zeta\big(a_i(t), x_i(t)\big) \Bigg( \frac{1}{N} \sum_{j=1}^N \varM(a_i,a_j) \, \varphi(x_j - x_i) \Bigg) \\
    &= \int_U \int_0^A \frac{1}{N} \sum_{i=1}^N \delta_{a_i(t)}(a) \, \delta_{x_i(t)}(x) \Bigg( \partial_a \zeta(a, x) \,\tau + \partial_x \zeta(a, x) \, \frac{1}{N} \sum_{j=1}^N  \varM(a,a_j) \, \varphi(x_j - x) \Bigg) \, da \, dx \\
    &= \int_U \int_0^A \rho(t,a,x) \, \partial_a \zeta(a, x) \,\tau + \partial_x \zeta(a, x) \, \rho(t,a,x) \bigg( \int_U \int_0^A \varM(a,b) \, \varphi(y - x) \, \rho(t,b,y) \, db \, dy \, \bigg) \, da \, dx \\
    &= - \int_U \int_0^A \zeta(a, x) \, \partial_a\rho(t,a,x) \,\tau + \zeta(a, x) \, \partial_x \Bigg( \rho(t,a,x) \bigg( \int_U \int_0^A \varM(a,b) \, \varphi(y - x) \, \rho(t,b,y) \, db \, dy \, \bigg) \Bigg) \, da \, dx 
\end{align*}
Note that in the final line the integration by parts produces no boundary terms since $\zeta$ is compactly supported. This gives that $\rho$ is a weak solution to
\begin{equation}
    \partial_t\rho(t,a,x) + \tau \partial_a\rho(t,a,x) + \partial_x \Bigg( \rho(t,a,x) \bigg( \int_U \int_0^A \varM(a,b) \, \varphi(y - x) \, \rho(t,b,y) \, db \, dy \, \bigg) \Bigg) = 0 \,.
\end{equation}
When considering the model with noise the derivation is more complex as the empirical density no longer satisfies the limiting PDE. Indeed the stochastic effects only disappear in the large-population limit \citep{jabin2017mean}. In this case we note that the structure of \eqref{Eqn: SDE model} is similar to that of the second order Cucker-Smale model for bird flocking \citep{cucker2007emergent} and the general second order model discussed in \cite{jabin2017mean}, in which alignment and noise are only present in the evolution of velocities (here opinions). The derivation of the mean-field limit in this setting can be found in \cite{sznitman1991topics,graham1996asymptotic}. While we do not show this rigorously here, the similarity in structure implies that the corresponding mean-field limit for \eqref{Eqn: SDE model} is given by
\begin{equation} \label{Eqn: PDE with noise}
    \partial_t\rho + \tau \partial_a\rho + \partial_x \Bigg( \rho(t,a,x) \bigg( \int_U \int_0^A \varM(a,b) \, \varphi(y - x) \, \rho(t,b,y) \, db \, dy \, \bigg) \Bigg) - \, \frac{\sigma^2}{2} \partial_{x^2} \rho = 0 \,.
\end{equation}
Defining the flux, 
\begin{align}\label{Eqn: Flux}
    F[\rho](t,a,x) = \rho(t,a,x) \bigg( \int_U \int_0^A \varM(a,b) \, \varphi(y - x) \, \rho(t,b,y) \, db \, dy - \, \frac{\sigma^2}{2} \partial_{x}\log\big(\rho(t,a,x)\big) \bigg) \,,
\end{align}
then the PDE can be written as 
\begin{align*}
    \partial_t \rho + \tau \partial_a \rho + \partial_x F[\rho] = 0 \,.
\end{align*}
We also need to specify the initial and boundary conditions. We assume that the initial condition is a given probability distribution $\rho_0(a,x)$. As we imposed reflecting boundary conditions in the SDE model we impose no-flux boundary conditions on the boundary of $U$ (here $x=\pm1$) \citep{erban2007reactive,goddard2022noisy}. It is also necessary to specify the opinion distribution at age-zero. As with the microscopic system we assume that this follows a given distribution $\mu$. In order to maintain a constant population size we therefore set
\begin{align*}
    \rho(t,0,x) = \mu(x) \, \int_U \rho(t,1,y) \, dy \,,
\end{align*}
as the integral 
\begin{align*}
    \int_U \rho(t,1,y) \, dy \,,
\end{align*}
describes the total volume of individuals whose opinions are reset at time $t$. 

The complete initial-boundary value problem can be written 
\begin{subequations} \label{Eqn: PDE with ageing}
\begin{align} 
    \partial_t \rho(t,a,x) + \tau \partial_a \rho(t,a,x) +  \partial_x F[\rho](t,a,x) &= 0 \\[0.5em]
    F[\rho](t,a,-1) = F[\rho](t,a,1) &= 0 \\
    \rho(t,0,x) &= \mu(t,x) \, \int_U \rho(t,1,y) \, dy \label{Eqn: PDE with ageing age-zero BC} \\
    \rho(0,a,x) &= \rho_0(a,x)
\end{align}
\end{subequations}
Having derived this PDE problem we next look at several example solutions, before moving onto our analysis of its steady states. 

\subsection{Numerical simulations of the mean-field PDE} \label{Section: PDE examples}

In this Section we show numerical solutions of \eqref{Eqn: PDE with ageing}, some of which mirror the examples in Section \ref{Section: Microscopic Model}. A description of the numerical scheme used can be found in Appendix \ref{Appendix: Numerics}. 

In each of the examples shown in Figure \ref{fig:PDE examples} we take both the initial distribution $\rho_0(a,x)$ and the age-zero distribution $\mu$ to be uniform (with the exception of Figure \ref{fig:PDE wiggles}). We again use the mollified bounded confidence interaction function \eqref{eqn: phi for examples}.
By varying the confidence bounds, ageing rate and the strength of the noise/diffusion we observe different clustering behaviours. In each figure we show the evolution over time of the total opinion density,
\begin{align} \label{Eqn: Total opinion denstiy}
    P(t,x) = \int \rho(t,a,x) \, da \,,
\end{align}
for $x\in U,\, t \geq 0$. This allows a more direct comparison with the behaviours observed in the microscopic model. 

\begin{figure}[ht!]
    \centering
    \begin{subfigure}{0.49\linewidth}
        \centering
        \includegraphics[width = \linewidth]{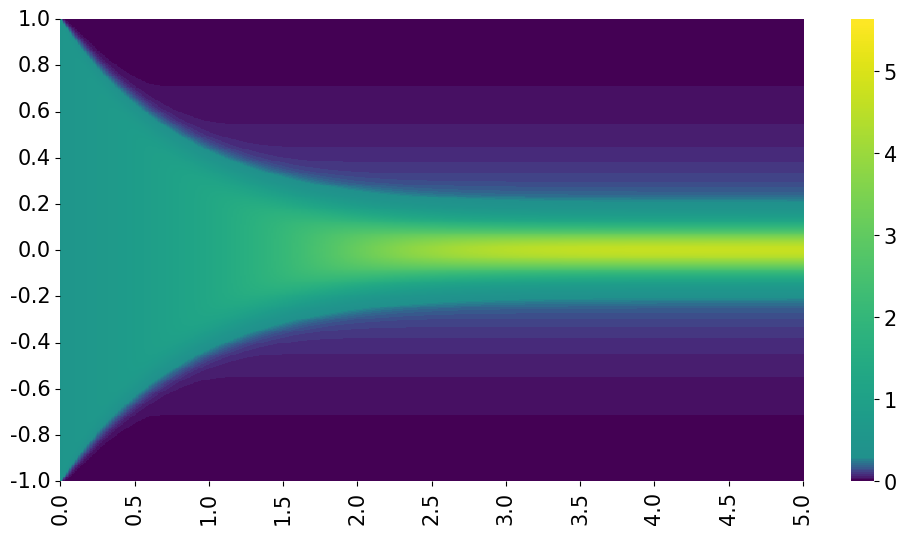}
        \caption{$\phi \equiv 1$, $\tau = 0.1$, $\sigma = 0.1$, $T = 5$.}
        \label{fig:PDE consensus}
    \end{subfigure}
    \begin{subfigure}{0.49\linewidth}
        \centering
        \includegraphics[width = \linewidth]{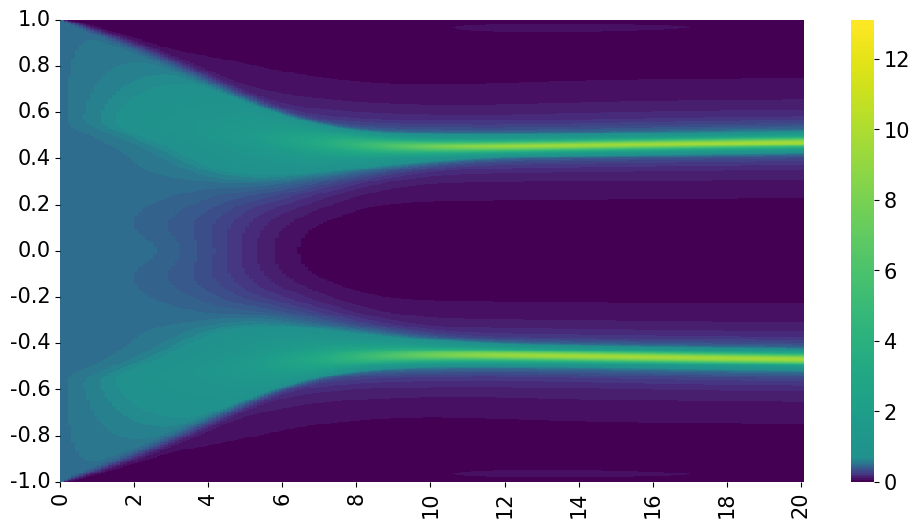}
        \caption{$r_1=0.4, r_2 = 0.5$, $\tau=0.05$, $\sigma=0.015$, $T=20$}
        \label{fig:PDE clusters}
    \end{subfigure}
    \begin{subfigure}{0.49\linewidth}
        \centering
        \includegraphics[width = \linewidth]{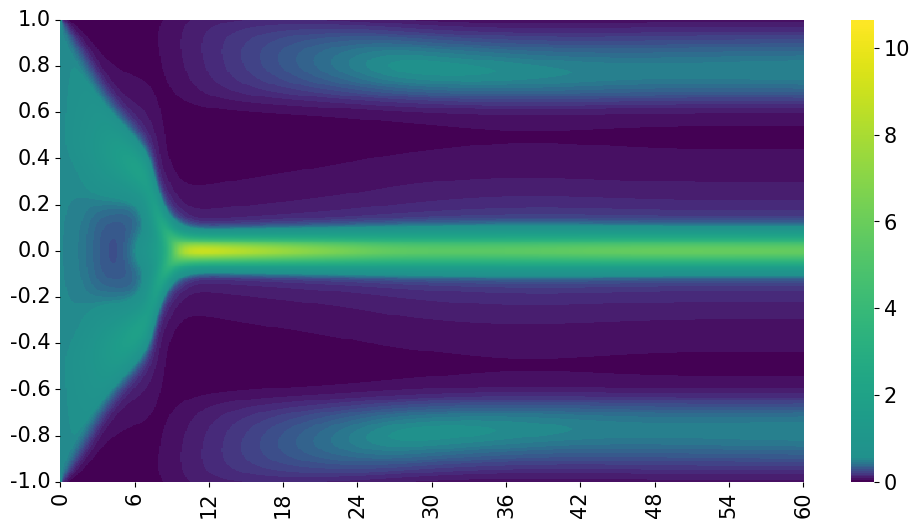}
        \caption{$r_1=0.5, r_2 = 0.6$, $\tau=0.05$, $\sigma=0.05$, $T=60$}
        \label{fig:PDE consensus with wings}
    \end{subfigure}
    \begin{subfigure}{0.49\linewidth}
        \centering
        \includegraphics[width = \linewidth]{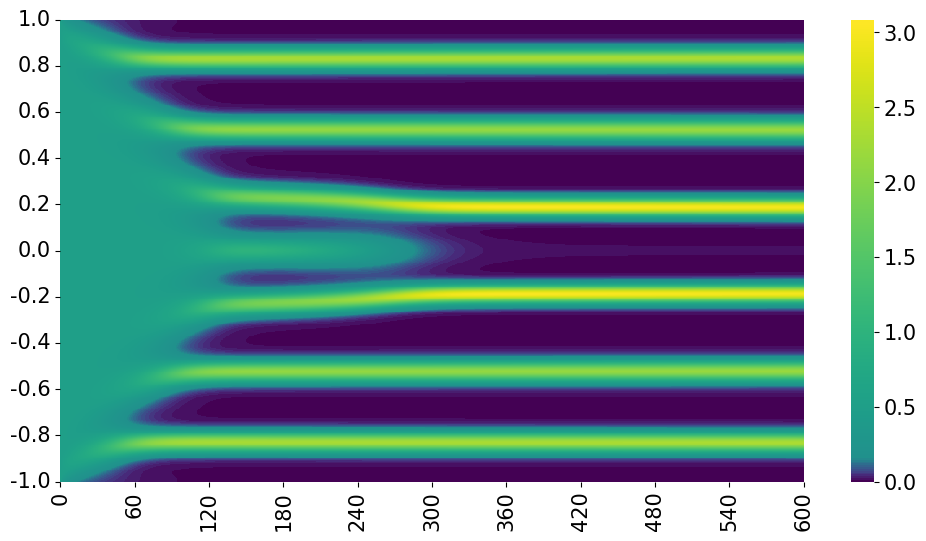}
        \caption{$r1 = 0.1, r2 = 0.11$, $\tau=0.001$, $\sigma=0.015$, $T=600$}
        \label{fig:PDE disappearing clusters}
    \end{subfigure}
    \begin{subfigure}{0.49\linewidth}
        \centering
        \includegraphics[width = \linewidth]{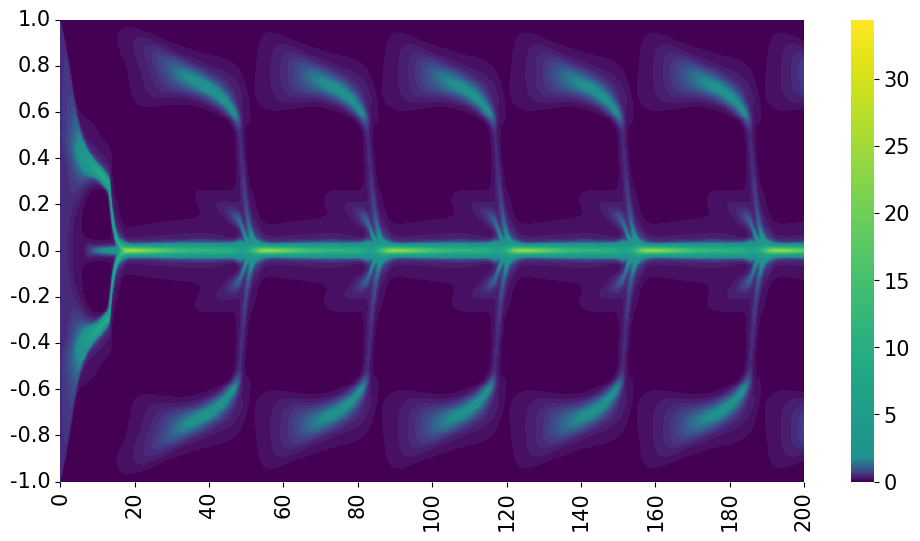}
        \caption{$r_1=0.48, r_2 = 0.58$, $\tau=0.05$, $\sigma=0.015$, $T=200$}
        \label{fig:PDE periodic consensus}
    \end{subfigure}
    \begin{subfigure}{0.49\linewidth}
        \centering
        \includegraphics[width = \linewidth]{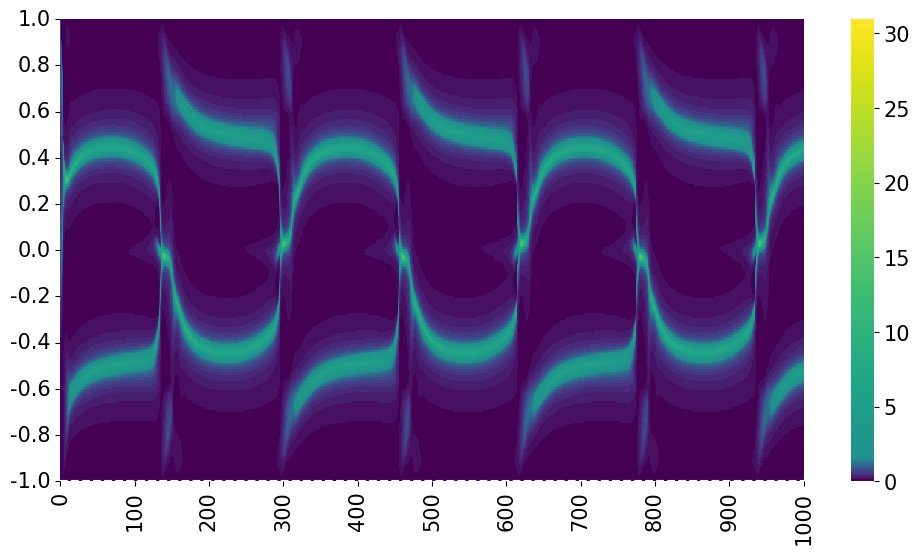}
        \caption{$r_1=0.48, r_2 = 0.58$, $\tau=0.1$, $\sigma=0.015$, $T=1000$}
        \label{fig:PDE wiggles}
    \end{subfigure}
    \caption{Example numerical solutions to \eqref{Eqn: PDE with ageing}. Colour indicates the total opinion density $P(t,x)$ \eqref{Eqn: Total opinion denstiy}. The colour map is shifted to make areas of low density more clearly visible. All examples use an interaction function $\phi$ given by \eqref{eqn: phi for examples}, $\varM \equiv 1$ and a uniform $\mu$ and $\rho_0$, with the exception of Figure \ref{fig:PDE wiggles} which uses \eqref{Eqn: bimodal rho0} for $\rho_0$. The values of parameters $r_1, r_2, \tau$ and $\sigma$ are varied to demonstrate different behaviours, with the final time $T$ changed accordingly. All solutions use the numerical scheme described in Appendix \ref{Appendix: Numerics} with $J_x = J_a = 500$ and $\Delta t = 0.001$.}
    \label{fig:PDE examples}
\end{figure}

Figure \ref{fig:PDE consensus} and Figure \ref{fig:PDE clusters} show examples in which the population quickly moves towards one and two stable clusters respectively, with Figure \ref{fig:PDE clusters} mirroring the behaviour observed in the microscopic system in Figure \ref{fig:SDE two clusters}. Here the ageing rate $\tau$ and level of noise $\sigma$ have a relatively minor effect compared to the strong clustering. Reducing either $\tau$ or $\sigma$ leads to more strongly peaked clusters, as individuals either have longer lifespans in which to concentrate their opinions or diffuse less away from the centre of the cluster. These effects can be seen by comparing Figure \ref{fig:PDE consensus} and Figure \ref{fig:PDE clusters}, in the former $\tau$ and $\sigma$ are relatively larger and the cluster(s) formed are therefore broader. In Figure \ref{fig:PDE consensus with wings} we show an intermediate point between the formation of one and two clear clusters. A strong central cluster forms, initially bringing the whole population to consensus. However, as new individuals enter the population, those with more extreme opinions are not brought into this consensus. This leads to the formation of two smaller, less concentrated, clusters at the extremes. 

In Figure \ref{fig:PDE disappearing clusters} we consider a smaller confidence bound. Initially the population remains near the uniform distribution due to the relatively strong diffusion, except at the boundary of the opinion space. As individuals at the boundary can only interact with those with less extreme opinions, small clusters form near the boundary, causing a subsequent decrease in density on the opposite side of the cluster. This in turn prompts the formation of another cluster closer to the centre, repeating until these clusters reach the centre of the opinion space. We also observe that by time $t = 400$ this central cluster cannot be maintained and all individuals in this cluster either die or join the clusters on either side. As in Figure \ref{fig:PDE clusters} the population reaches a steady state with almost evenly spaced, although not evenly sized, clusters. 

Figure \ref{fig:PDE periodic consensus} and Figure \ref{fig:PDE wiggles} show instances of periodic behaviour, similar to that seen in the microscopic model in Figure \ref{fig:SDE wiggles}. In Figure \ref{fig:PDE periodic consensus}, as in \ref{fig:PDE consensus with wings}, a strong central cluster quickly forms leaving a vacuum at the extreme opinions in which new, smaller clusters form. In this case these eventually combine with the central cluster before the cycle repeats again. This effect is only possible due to the influx of new individuals to the population, as without it the initial consensus would simply persist. The parameter range for which this behaviour occurs is relatively small, as it lies on the boundary between the population forming into two clusters (as in Figure \ref{fig:PDE clusters}) and the population forming one large central cluster and two smaller clusters that do not merge (as in Figure \ref{fig:PDE consensus with wings}). 

In Figure \ref{fig:PDE wiggles} we begin with a non-uniform initial condition given by \eqref{Eqn: bimodal rho0}, which has two peaks centred at $0$ and $-0.8$. Due to the uniform influx of new individuals these peaks move to become more symmetric. As in Figure \ref{fig:PDE periodic consensus} these clusters periodically combine and reform. However, in this case a lack of symmetry persists, meaning that after the two clusters combine only one new extreme cluster forms (rather than one forming near each boundary). This leads to a periodic shifting between positive and negative clusters that form a brief off-centre consensus that cannot be maintained. This directly mirrors the behaviour observed on the individual level in Figure \ref{fig:SDE wiggles}.

These examples demonstrate the rich variety of complex behaviours that are made possible by the inclusion of a continuous age-structure. In the following Section \ref{Section: steady states} we move to studying the possible stationary states of this model. 

\section{Stationary States} \label{Section: steady states}

To address the question of macroscopic stability in the SDE model \eqref{Eqn: SDE model}, as observed for example in Figure \ref{fig:SDE two clusters}, and the long-term behaviour seen in Figures \ref{fig:PDE clusters} and \ref{fig:PDE disappearing clusters}, we hope to establish the existence of steady states of the PDE model \eqref{Eqn: PDE with ageing}. That is, we are looking for a probability density $\rho(a,x)$ such that 
\begin{subequations} \label{Eqn: Steady state}
\begin{align}
    \tau \partial_a\rho(a,x) + \partial_x \Bigg( \rho(a,x) \bigg( \int_U \int_0^A \varM(a,b) \, \varphi(y - x) \, \rho(b,y) \, db \, dy \, \bigg) \Bigg) - \, \frac{\sigma^2}{2} \partial_{x^2} \rho(a,x) &= 0 \,, \label{Eqn: Steady state PDE}\\
    \rho(0,x) - \mu(x) \, \int_U \rho(1,y) \, dy  &= 0 \,,\label{Eqn: Steady state BC}\\
    \rho(a,1) \bigg( \int_U \int_0^A \varM(a,b) \, \varphi(y - 1)\, \rho(b,y) \, db \, dy \, \bigg) - \, \frac{\sigma^2}{2} \partial_{x} \rho(a,1) &= 0 \,,\\
    \rho(a,-1) \bigg( \int_U \int_0^A \varM(a,b) \, \varphi(y + 1) \, \rho(b,y) \, db \, dy \, \bigg) - \, \frac{\sigma^2}{2} \partial_{x} \rho(a,-1) &= 0 \,.
\end{align}    
\end{subequations}
If we assume that initially there is a uniform density in age then this will be preserved, meaning \eqref{Eqn: Steady state BC} can be replaced with
\begin{equation}
    \rho(0,x) = \mu(x) \,.
\end{equation}
Note that \eqref{Eqn: Steady state PDE} is similar in structure to the classical mean-field limit, with the age variable $a$ acting as `time'. However, the integral over $db$ inside the flux would then correspond to a time evolution in which individuals interacted with the future and past opinions of the population. Thus the PDE \eqref{Eqn: Steady state} cannot be solved forwards in age/time like the classical mean-field limit. To overcome this problem we will instead construct a mapping whose fixed point corresponds to a solution of \eqref{Eqn: Steady state}. 

For certain parameter regimes it can also take some time for the model to converge towards a steady state in numerical simulations, or we observe seemingly periodic behaviour. To this end we aim to find a simpler method to identify steady states that avoids solving either \eqref{Eqn: PDE with noise} or \eqref{Eqn: Steady state} directly. In proving the validity of this method we also prove the existence of steady states. 

\subsection{Existence of steady states} \label{Section: Existence of steady states}

In this section we make the following assumptions:
\begin{assumption} \label{Assumption group: Fixed point} Assume the following:
    \begin{enumerate}[label={\textbf{\Alph*:}},ref={\theassumption\Alph*}]
        \item For all $a\in[0,A]$ \label{Assumption: Initial age profile uniform.}
        \begin{align*}
            \int_U \rho_0(a,x) \, dx = 1 \,.
        \end{align*} 
        \item The age-interaction kernel $\varM(a,b)$ depends only upon $b$. That is, we can consider it as a function $\varM:[0,A]\rightarrow\mathds{R}_{\geq0}$, $b\mapsto \varM(b)$. \label{Assumption: A(a,b)=A{A}(b)}
        \item \label{Assumption: Integral of M is 1} The integral 
        \begin{align}
            \int_0^A \varM(b) \, db = 1 \,.
        \end{align} 
    \end{enumerate}
\end{assumption}
Assumption \ref{Assumption: Initial age profile uniform.} means that the initial ($t=0$) age profile is uniform. As will be seen in Section \ref{Section: Age transport} the age profile of the population satisfies a transport equation, so Assumption \ref{Assumption: Initial age profile uniform.} ensures that the age profile remains uniform at all times. Assumption \ref{Assumption: A(a,b)=A{A}(b)} ensures that all individuals view the population-level opinion density in the same way, regardless of their own age. This assumption will be crucial in performing the analysis below. As discussed in Section \ref{Section: MK ageing}, even with these two assumptions the model can still effectively capture realistic ageing effects and non-uniform population level age distributions. Note that Assumption \ref{Assumption: Integral of M is 1} can be achieved by appropriately rescaling $A$, $\tau$ and $\sigma$ and is made to reduce the number of parameters. 

As the age-interaction kernel $\varM$ is Lipschitz continuous on a finite domain $U$, it is also bounded and we denote the maximum value of $\varM$ by 
\begin{align*}
    \varM_{\max} = \max\limits_{0\leq b \leq A} \varM(b) \,.
\end{align*}

We can now rewrite \eqref{Eqn: Steady state PDE} as 
\begin{align*}
    \tau \partial_a\rho(a,x) + \partial_x \Bigg( \rho(a,x) \bigg( \int_U \varphi(y - x) \bigg( \int_0^A \varM(b) \, \rho(b,y) \, db \bigg) \, dy \, \bigg) \Bigg) - \, \frac{\sigma^2}{2} \partial_{x^2} \rho(a,x) &= 0 \,.
\end{align*}
To show the existence of a steady state we will use a fixed point argument. To this end assume temporarily that the value of the following integral, which is constant in $a$, is known
\begin{align*}
    \lambda(y) = \int_0^A \varM(b) \, \rho(b,y) \, db \,.
\end{align*}
This integral describes the population-level opinion distribution with which individuals interact. From this, we define $\Lambda:U\rightarrow\mathds{R}$ by
\begin{align} \label{Eqn: Definition of Lambda}
    \Lambda(x) := \int_U \varphi(y - x) \, \lambda(y) \, dy \,,
\end{align}
and let $\Tilde{\rho}(a,x)$ be a solution to the initial-boundary value problem
\begin{subequations} \label{Eqn: lambda steady state}
\begin{align}
    \partial_a\rho(a,x) + \partial_x \Big( \rho(a,x) \Lambda(x) \Big) - \, \frac{\sigma^2}{2} \partial_{x^2} \rho(a,x) &= 0 \,, \label{Eqn: lambda steady state PDE}\\
    \rho(0,x) &= \mu(x) \,,\label{Eqn: lambda steady state BC}\\
    \rho(a,1) \, \Lambda(1) - \, \frac{\sigma^2}{2} \partial_{x} \rho(a,1) &= 0 \,,\\
    \rho(a,-1) \, \Lambda(-1) - \, \frac{\sigma^2}{2} \partial_{x} \rho(a,-1) &= 0 \,.
\end{align}    
\end{subequations}
From this solution, define
\begin{align}
    \Tilde{\lambda}(y) &= \int_0^A \varM(b) \, \Tilde{\rho}(b,y) \, db \,,  \label{Eqn: lambda consistency equation} 
\end{align}
and note that if $\Tilde{\lambda} = \lambda$ we have in fact recovered \eqref{Eqn: Steady state PDE}. That is, there is a one-to-one correspondence between stationary states of \eqref{Eqn: PDE with ageing} and fixed points of the mapping $\lambda \mapsto \Tilde{\lambda}$ described above. We now rigorously show the existence of such stationary states through the following steps:
\begin{itemize}
    \item[] \textbf{Step 1:} Determine conditions for the existence of solutions to \eqref{Eqn: lambda steady state} in an appropriate space. 
    \item[] \textbf{Step 2:} Introduce the mapping $\lambda \mapsto \Tilde{\lambda} =: \mathcal{F}(\lambda)$ and show that $\mathcal{F}:L^\infty(U)\rightarrow L^\infty(U)$ is well-defined.
    \item[] \textbf{Step 3:} Find an appropriate set $K\subset L^\infty(U)$ so that we may apply Schauder's Fixed Point Theorem to $\mathcal{F}:K\rightarrow K$.
\end{itemize}

\underline{\smash{\textbf{Step 1:}}} Determine conditions for the existence of solution to \eqref{Eqn: lambda steady state} in an appropriate space.

We will see that it is sufficient to consider weak solutions of \eqref{Eqn: lambda steady state}. We therefore begin by defining and setting the notation for weak derivates and several useful function spaces, following the definitions in \cite{evans2022partial} and \cite{ladyzhenskaia1968linear}. 

For $1\leq p \leq \infty$ let $L^p(U_A)$ be the Banach space of all measurable functions on $U_A$ with finite norm
\begin{align*}
    \| u \|_{L^p(U_A)} = 
    \begin{cases}
        \bigg( \int_0^A \int_U |u(a,x)|^p \, dx \, da \bigg)^{1/p} & p < \infty \,, \\
        \esssup_{(a,x)\in U_A} |u(a,x)| & p = \infty \,.
    \end{cases}
\end{align*}
We use the shorthand notation $\| u \|_p$ for this norm. 

Suppose $u \in L^1(U_A)$. We say that $v \in L^1(U_A)$ is the weak partial $x$-derivative of $u$, written $D_x u = v$ provided
\begin{align*}
    \int_0^A \int_U u(a,x) \, D_x \zeta(a,x) \, dx \, da = - \int_0^A \int_U v(a,x) \, \zeta(a,x) \, dx \, da
\end{align*}
for all test functions $\zeta:U_A\rightarrow\mathds{R}$ that are infinitely differentiable with compact support in $U_A$. The second weak $x$-derivative and the weak $a$-derivative, denoted $D^2_x$ and $D_a$ respectively, are defined similarly. 

Denote by $W_2^{2,1}(U_A)$ the Sobolev space consisting of the elements of $L^2(U_A)$ having weak derivatives of the form $D_a^r D_x^s$ for any $r$ and $s$ with $2r + s \leq 2$. The norm in this space is 
\begin{align*}
    \| u \|_{W_2^{2,1}(U_A)} = \bigg( \| u \|_2^2 + \| D_x u \|_2^2 + \| D_x^2 u \|_2^2 + \| D_a u \|_2^2 \bigg)^{1/2} \,.
\end{align*}
While $u \in W_2^{2,1}(U_A)$ are functions of both age and opinion, it will at times be convenient to think of them instead as mapping ages $(0,A)$ into a space of functions over opinions. Denote by $H^1(U)$ the Sobolev space consisting of the elements of $L^2(U)$ with weak $x$-derivative and norm
\begin{align*}
    \| \nu \|_{H^1(U)} = \bigg( \| \nu \|_{L^2(U)}^2 + \| D_x \nu \|_{L^2(U)}^2 \bigg)^{1/2} \,.
\end{align*}
For any $u \in W_2^{2,1}(U_A)$, $u(a,\cdot)$ must lie in $H^1(U)$ for almost every $a\in(0,A)$. Thus we can associate $u$ with a function $\underline{u}:(0,A)\rightarrow H^1(U)$ given by $\underline{u}(a) = u(a,\cdot)$ (after possibly redefining $u$ on a set of measure zero). Such functions are elements of a Bochner space: for $1\leq p < \infty$ let $L^p(0,A;H^1(U))$ denote the Bochner space of functions $\underline{u}:(0,A) \rightarrow H^1(U)$ with finite norm
\begin{align*}
    \|\underline{u}\|_{L^p(0,A;H^1(U))} := \bigg( \int_0^A \|\underline{u}(a)\|_{H^1(U)}^p \, da \bigg)^{1/p} \,.
\end{align*}
By comparing the definitions of each norm, we see that for any $u \in W_2^{2,1}(U_A)$, $\underline{u} \in L^2(0,A;H^1(U))$ with 
\begin{align} \label{Eqn: bound on u in $L^2(0,T;H1(U))$}
    \|\underline{u}\|_{L^2(0,A;H^1(U))} \leq \|u\|_{W_2^{2,1}(U_A)} \,.
\end{align}
(See Lemma \ref{Lemma: underline well defined} in Appendix \ref{Appendix: Proofs} for details).

We aim to apply Theorem 9.1 from Chapter IV Section 9 of \cite{ladyzhenskaia1968linear} to guarantee the existence and uniqueness of weak solutions to \eqref{Eqn: lambda steady state}. Using the notation established above, we now clarify the notion of a weak solution to \eqref{Eqn: lambda steady state} as an element $\rho\in W^{2,1}_2(U_A)$.

Assume first that \eqref{Eqn: lambda steady state} has a classical solution $\rho(a,x)$. Take a test function $\zeta(a,x)$ that is twice continuously differentiable, multiply $\rho(a,x)$ by $\zeta(a,x)$ and integrate over $U_A$ to give 
\begin{align}
    0 &= \int_0^A \int_U \bigg( \tau\partial_a \rho + \partial_x ( \rho \Lambda ) - \frac{\sigma^2}{2} \partial_{x^2} \rho \bigg) \zeta \, dx \, da \nonumber\\
    &= \tau\int_0^A \int_U \partial_a \rho \, \zeta \, dx \, da + \int_0^A \int_U \partial_x \Big( \rho \Lambda - \frac{\sigma^2}{2} \partial_x \rho \Big) \, \zeta \, dx \, da \nonumber\\
    &= \tau\int_0^A \int_U \partial_a \rho \, \zeta \, dx \, da 
    + \int_0^A \Big[ \Big(\rho \Lambda - \frac{\sigma^2}{2}\partial_x \rho \Big)\zeta \Big]_{x=-1}^{x=1} \, da - \int_0^A \int_U \Big(\rho \Lambda - \frac{\sigma^2}{2}\partial_x \rho \Big) \, \partial_x\zeta \, dx \, da \label{Eqn: developing weak formulation}
\end{align}
For an element $\rho \in W^{2,1}_2(U_A)$ the weak partial derivatives $D_a \rho$ and $D_x \rho$ exist and, since $\rho$ is twice weakly differentiable in $x$ we may apply the trace theorem to $\rho$ and $D_x \rho$ to ensure their values at the boundary $x \in \{-1,1\}$ are well-defined. Thus we can prescribe the boundary condition 
\begin{align*}
    \rho \Lambda - \frac{\sigma^2}{2} D_x \rho = 0 \,,
\end{align*}
on $S_A = (0,A)\times\{-1,1\}$, hence
\begin{align*}
    \int_0^A \Big[ \Big(\rho \Lambda - \frac{\sigma^2}{2} D_x \rho \Big)\zeta \Big]_{x=-1}^{x=1} \, da = 0 \,.
\end{align*}
We therefore call $\rho \in W^{2,1}_2(U_A)$ a weak solution of \eqref{Eqn: lambda steady state} if the following holds for any test function $\zeta$: 
\begin{subequations} \label{Eqn: Weak formulation of lambda steady state eq}
    \begin{align}
        \int_0^A \int_U D_a \rho \, \zeta \, dx \, da  - \frac{1}{\tau} \int_0^A \int_U \Big(\rho \Lambda - \frac{\sigma^2}{2} D_x \rho \Big) \, D_x\zeta \, dx \, da &= 0 \,,\\[0.2em]
        \rho(a,1) \Lambda(a,1) - \frac{\sigma^2}{2} D_x \rho(a,1) &= 0  \,,\\[0.3em]
        \rho(a,-1) \Lambda(a,-1) - \frac{\sigma^2}{2} D_x \rho(a,-1) &= 0  \,,\\[0.3em]
        \rho(0,x) &= \mu(x) \,.
    \end{align}
\end{subequations}

In order to apply the desired existence and uniqueness Theorem we will require the following the following conditions::
\begin{enumerate}
    \item[(L1)] $\Lambda \in L^3(U_A)$.
    \item[(L2)] $\Lambda' \in L^2(U_A)$, where $'$ denotes a derivative.
    \item[(L3)] $\|\Lambda\|_{L_3(U_{t,t+s})} \rightarrow 0 $ as $s \rightarrow 0$. 
    \item[(L4)] $\|\Lambda'\|_{L^2(U_{t,t+s})} \rightarrow 0 $ as $s \rightarrow 0$.
    \item[(L5)] $\mu \in L^2(U)$\,.
\end{enumerate}

If we assume that $\lambda \in L^\infty(U)$ then we have immediately that $\Lambda \in L^\infty(U) \subset L^3(U)$, with the inclusion holding since $U$ is bounded. In addition as $\phi\in C^2(U)$ we may use that $\lambda \in L^\infty(U)$ to exchange the integration and differentiation below to obtain 
\begin{align}
    \Lambda'(x) 
    &= \frac{d}{dx} \int_U \varphi(y - x)\, \lambda(y) \, dy \,,\nonumber\\
    &= \int_U \frac{d}{dx} \varphi(y - x)\, \lambda(y) \, dy \,,\nonumber\\
    &= - \int_U \Big( \phi'(y - x)\,(y-x) + \phi(y-x) \Big)\, \lambda(y) \, dy \,, \label{Eqn: Definition of Lambda'}
\end{align}
hence $\Lambda'\in L^\infty(U) \subset L^2(U)$. 

In addition to the assumption that $\lambda \in L^\infty(U)$, we note that we intend to define $\mathcal{F}(\lambda)$ as the integral of $\rho$, which is itself a probability density. Hence we also assume that $\lambda$ is positive almost everywhere with
\begin{align*}
    \|\lambda\|_{L^1(U)} = \int_U \lambda(y) \, dy = 1 \,.
\end{align*}
To summarise, we assume that 
\begin{align*}
    \lambda \in K_1 := \{ \lambda \in L^\infty(U) : \lambda \geq 0 \text{ a.e.}, \| \lambda \|_{L^1(U)}=1 \} \,.
\end{align*}
From this we can now write the following Proposition \ref{Prop: Uniform bounds on Lambda}. 

\begin{proposition} \label{Prop: Uniform bounds on Lambda}
    The functions $\Lambda$ and $\Lambda'$, defined by \eqref{Eqn: Definition of Lambda} and \eqref{Eqn: Definition of Lambda'} respectively, are bounded uniformly with respect to $\lambda \in K_1$. That is, there exists a constant $C_{\phi'}$, depending on $\phi'$ and $U$ only, such that for all $\lambda \in K_1$ and all $x \in U$, 
    \begin{align*}
        |\Lambda(x)| \leq 2 \quad\text{and}\quad |\Lambda'(x)| \leq C_{\phi'} \,.
    \end{align*}
\end{proposition}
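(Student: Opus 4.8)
The plan is to estimate both $\Lambda$ and $\Lambda'$ pointwise by direct calculation, exploiting the fact that $\lambda$ enters only through its $L^1$-norm, which is fixed at $1$ for every $\lambda \in K_1$. The three ingredients are: the boundedness of $\phi$ and $\phi'$ (the former since $\phi$ maps into $[0,1]$, the latter since $\phi \in C^2(\mathds{R})$ and $[-2,2]$ is compact), the bound $|y-x| \leq 2$ valid for all $x,y \in U = (-1,1)$, and the normalisation $\|\lambda\|_{L^1(U)} = 1$.

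For the first estimate I would move the modulus inside the integral defining $\Lambda$ in \eqref{Eqn: Definition of Lambda} and bound the kernel directly: since $\phi \in [0,1]$ and $|y-x| \leq 2$ on $U \times U$, we have $|\varphi(y-x)| = |\phi(y-x)|\,|y-x| \leq 2$. Hence
\begin{align*}
    |\Lambda(x)| \leq \int_U |\varphi(y-x)|\,\lambda(y)\,dy \leq 2 \int_U \lambda(y)\,dy = 2 \,,
\end{align*}
using $\lambda \geq 0$ a.e.\ and $\|\lambda\|_{L^1(U)} = 1$. The resulting bound is independent both of $x$ and of the particular $\lambda \in K_1$.

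For the second estimate I would use the explicit expression \eqref{Eqn: Definition of Lambda'} for $\Lambda'$ and bound the integrand via the triangle inequality:
\begin{align*}
    \big| \phi'(y-x)\,(y-x) + \phi(y-x) \big| \leq |\phi'(y-x)|\,|y-x| + |\phi(y-x)| \leq 2\,\|\phi'\|_{L^\infty([-2,2])} + 1 =: C_{\phi'} \,.
\end{align*}
Since $\phi \in C^2$, the quantity $\|\phi'\|_{L^\infty([-2,2])}$ is finite, so $C_{\phi'}$ is a finite constant depending only on $\phi'$ and (through the factor $2 = \operatorname{diam}(U)$) on $U$. Integrating against $\lambda$ and again invoking $\|\lambda\|_{L^1(U)} = 1$ then yields $|\Lambda'(x)| \leq C_{\phi'}$ for every $x \in U$, uniformly in $\lambda \in K_1$.

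There is no substantive obstacle here: the argument reduces to a pair of elementary integral estimates. The only point requiring a little care is to confirm that the constants genuinely do not depend on $\lambda$ — and this is precisely why we restrict to $K_1$ rather than to a general subset of $L^\infty(U)$, since the estimates use only the fixed value $\|\lambda\|_{L^1(U)} = 1$ and the sign of $\lambda$, never the (possibly large) value of $\|\lambda\|_{L^\infty(U)}$. This uniformity is exactly what will later be needed in Step~3 to construct the invariant set $K$ for Schauder's Fixed Point Theorem.
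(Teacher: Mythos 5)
Your proof is correct and follows essentially the same route as the paper's: both estimates move the modulus inside the integral, bound the kernel pointwise using $|\phi|\leq 1$, the boundedness of $\phi'$ on the compact set of attainable arguments, and $|y-x|\leq 2$, and then invoke $\lambda\geq 0$ a.e.\ with $\|\lambda\|_{L^1(U)}=1$. The only difference is cosmetic: you make the constant $C_{\phi'}=2\|\phi'\|_{L^\infty([-2,2])}+1$ explicit, whereas the paper leaves it abstract.
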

\begin{proof}
    For any fixed $x \in U$ we have
    \begin{align*}
        |\Lambda(x)| 
        &= \bigg| \int_U \varphi(y - x) \, \lambda(y) \, dy \bigg| \\
        &\leq \int_U \big| \varphi(y - x) \big| \, \big|\lambda(y)\big| \, dy \\
        &\leq 2 \int_U \, \big|\lambda(y)\big| \, dy \\
        &= 2 \,.
    \end{align*}
    Since $|\phi|\leq 1$ and $\phi'$ is continuous, and therefore bounded on $U$, there is also some constant $C_{\phi'}$, which depends on $\phi$ and $U$ only, such that \[|\phi'(y - x)\,(y-x) + \phi(y-x)| \leq C_{\phi'}\] for all $x,y$. Hence an almost identical argument to that above shows the second inequality. 
\end{proof}

Since both $\Lambda$ and $\Lambda'$ are constant in time, we have therefore satisfied conditions L1-L4 above. It will be necessary to later verify that the set $K_1$, or some subset of it, is invariant under $\mathcal{F}$ to ensure that these conditions are maintained. 

We can now apply Theorem 9.1 from Chapter IV Section 9 of \cite{ladyzhenskaia1968linear} (with $f\equiv0, \Phi \equiv 0$) to conclude the existence of a unique solution $\rho \in W_2^{2,1}(U_A)$ to \eqref{Eqn: lambda steady state} that additionally satisfies 
\begin{align}
    \|\rho\|_{W_2^{2,1}(U_A)} \leq c_1 \, \| \mu \|_{L^2(U)} \,, \label{Eqn: bound on rho in Sob space}
\end{align}
where $\mu \in L^2(U)$ is the opinion distribution specified at age zero. Note that the constant $c_1$ depends upon the final age $A$ but is bounded for any $A < \infty$. The constant $c_1$ also depends upon the coefficients of \eqref{Eqn: lambda steady state} but since, by Proposition \ref{Prop: Uniform bounds on Lambda}, these coefficients are bounded uniformly for $\lambda\in K_1$, we may also take a constant $c_1$ such that \eqref{Eqn: bound on rho in Sob space} holds for any $\lambda \in K_1$. 

\underline{\smash{\textbf{Step 2:}}} Introduce the mapping $\lambda \mapsto \Tilde{\lambda} =: \mathcal{F}(\lambda)$ and show that $\mathcal{F}:L^\infty(U)\rightarrow L^\infty(U)$ is well-defined.

Letting $\rho \in W_2^{2,1}(U_A)$ be the unique solution to \eqref{Eqn: lambda steady state} with a given $\lambda \in L^\infty(U)$, we define
\begin{align} \label{Eqn: Definition of F mapping}
    \mathcal{F}(\lambda) := \int_0^A \varM(b) \, \underline{\rho}(a) \, da \,,
\end{align}
where the integral is a Bochner integral (see for example \textsection 2.2 of \cite{diestel1977vector} for further details). As $\rho\in W_2^{2,1}(U_A)$, \eqref{Eqn: bound on u in $L^2(0,T;H1(U))$} gives that $\underline{\rho}$ is a well-defined element of $L^2(0,A;H^1(U))$. As $[0,A]$ is bounded, $L^2(0,A;H^1(U)) \subset L^1(0,A;H^1(U))$ and so H\"older's inequality gives 
\begin{align} \label{Eqn: bounding L1(0,T;H1) by L2(0,T;H1)}
    \| \underline{\rho} \|_{L^1(0,A;H^1(U))} \,\leq \sqrt{A} \| \underline{\rho} \|_{L^2(0,A;H^1(U))} \,.
\end{align}
For each age $a\in(0,A)$, $\varM(a)$ is simply a constant, thus
\begin{align*}
    \| \varM\underline{\rho} \|_{L^1(0,A;H^1(U))} =  \int_0^A \|\varM(a)\,\underline{\rho}(a)\|_{H^1(U)}^2 \, da =  \int_0^A \varM(a)\,\|\underline{\rho}(a)\|_{H^1(U)}^2 \, da \leq   \varM_{\max} \,\int_0^A \|\underline{\rho}(a)\|_{H^1(U)}^2 \, da \,.
\end{align*}
Hence $\varM\underline{\rho} \in L^1(0,A;H^1(U))$, so Theorem 2 in \textsection 2.2 of \cite{diestel1977vector} gives that the Bochner integral \eqref{Eqn: Definition of F mapping} is well-defined. 

\underline{\smash{\textbf{Step 3:}}} Find an appropriate set $K\subset L^\infty(U)$ so that we may apply Schauder's Fixed Point Theorem to $\mathcal{F}:K\rightarrow K$.

The above also shows that $\mathcal{F}(\lambda) \in H^1(U)$, although this alone does not guarantee that $\mathcal{F}(\lambda)$ remains in $L^\infty(U)$ as required. For this we apply the following Lemma \ref{Lemma: Bounding lambda in L infinity with the norm of rho in Sob}. 

\begin{lemma} \label{Lemma: Bounding lambda in L infinity with the norm of rho in Sob}
    There exists a constant $c_2$ such that for any $u \in W_2^{2,1}(U_A)$ the following estimate holds
    \begin{align}
        \bigg\| \int_0^A \underline{u}(a) \, da \, \bigg\|_{L^\infty(U)} \leq c_2 \sqrt{A} \, \|u\|_{W_2^{2,1}(U_A)} \,.
    \end{align}
\end{lemma}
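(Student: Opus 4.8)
The plan is to estimate the $L^\infty(U)$ norm in two stages: first control the $H^1(U)$ norm of the Bochner integral $v := \int_0^A \underline{u}(a)\,da$, and then invoke the one-dimensional Sobolev embedding $H^1(U) \hookrightarrow L^\infty(U)$, which is available precisely because $U=(-1,1)$ is a bounded interval.

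For the first stage, I would use the standard norm inequality for the Bochner integral, namely
\[
\Big\| \int_0^A \underline{u}(a)\,da \Big\|_{H^1(U)} \le \int_0^A \|\underline{u}(a)\|_{H^1(U)}\,da = \|\underline{u}\|_{L^1(0,A;H^1(U))} \,.
\]
Applying H\"older's inequality in the age variable exactly as in \eqref{Eqn: bounding L1(0,T;H1) by L2(0,T;H1)} gives $\|\underline{u}\|_{L^1(0,A;H^1(U))} \le \sqrt{A}\,\|\underline{u}\|_{L^2(0,A;H^1(U))}$, and the estimate $\|\underline{u}\|_{L^2(0,A;H^1(U))} \le \|u\|_{W_2^{2,1}(U_A)}$ established in Step 1 then yields
\[
\|v\|_{H^1(U)} \le \sqrt{A}\,\|u\|_{W_2^{2,1}(U_A)} \,.
\]

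For the second stage, I would apply the Sobolev embedding theorem in dimension one: there is a constant $C = C(U)$, depending only on the interval $U$, such that $\|w\|_{L^\infty(U)} \le C\,\|w\|_{H^1(U)}$ for every $w \in H^1(U)$. Concretely, an element of $H^1(U)$ has an absolutely continuous representative, and the fundamental theorem of calculus together with the Cauchy--Schwarz inequality gives H\"older continuity with exponent $1/2$ and hence a uniform bound. Combining this with the $H^1$ estimate above and setting $c_2 := C$ completes the proof, since $\|v\|_{L^\infty(U)} \le C\,\|v\|_{H^1(U)} \le C\sqrt{A}\,\|u\|_{W_2^{2,1}(U_A)}$.

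The only delicate points are bookkeeping rather than substance. One must confirm that the Bochner integral $v$, a priori an element of $H^1(U)$, has as its $L^\infty$ norm exactly the quantity appearing in the statement; this follows because the Bochner integral commutes with the continuous embedding $H^1(U)\hookrightarrow L^\infty(U)$ (a bounded linear map), so $v$ agrees almost everywhere with $x \mapsto \int_0^A u(a,x)\,da$. The genuinely essential ingredient is the one-dimensional Sobolev embedding: it is the low dimension of the opinion space $U$ that allows $H^1$ control to upgrade to $L^\infty$ control, and this is the step that fixes the constant $c_2 = C(U)$ and would fail for a higher-dimensional opinion variable. I therefore expect this embedding to be the main point to invoke carefully, with everything else reducing to the inequalities already assembled in Steps 1 and 2.
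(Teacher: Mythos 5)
Your proof is correct and uses the same ingredients as the paper's: the norm inequality for the Bochner integral, H\"older's inequality in age via \eqref{Eqn: bounding L1(0,T;H1) by L2(0,T;H1)} and \eqref{Eqn: bound on u in $L^2(0,T;H1(U))$}, and the one-dimensional embedding $H^1(U)\hookrightarrow L^\infty(U)$ (which the paper obtains as a Gagliardo--Nirenberg interpolation inequality). The only difference is the order of operations --- you apply the embedding once to the integrated function $v$, whereas the paper applies it inside the age integral and must then separately note that $\underline{u}$ is Bochner-integrable as an $L^\infty(U)$-valued map --- but this is an inessential commutation, not a different argument.
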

\begin{proof}
    We apply a version of the Gagliardo–Nirenberg interpolation inequality to compare the $L^\infty$ norm to the $H^1$ norm on a bounded interval $U$. The specific form used can be found in the Comments on Chapter 8 of \cite{brezis2011functional}, setting $p=\infty, q=r=2, a=1/2$. There exists a constant $c_2$ such that for any $v \in H^1(U)$
    \begin{align*}
        \|v\|_{L^\infty(U)} \leq c_2 \|v\|_{L^2(U)}^{1/2} \|u\|_{H^1(U)}^{1/2} \,.
    \end{align*}
    Note that $\|v\|_{L^2(U)} \leq \|v\|_{H^1(U)}$ so we can replace the above with
    \begin{align}
        \|v\|_{L^\infty(U)} \leq c_2 \|v\|_{H^1(U)} \,. \label{Eqn: GN inequality on U}
    \end{align}
    Applying this to $\underline{u}(a)$ for almost every $0\leq a \leq A$ gives
    \begin{align} \label{Eqn: Applying GN inequality inside integral}
        \int_0^A \| \underline{u}(a) \|_{L^\infty(U)} \, da \leq c_2 \int_0^A \| \underline{u}(a) \|_{H^1(U)} \, da < \infty \,,
    \end{align}
    since $\underline{u} \in L^1(0,A;H^1(U))$. Hence $\underline{u}$ is integrable in $L^\infty(U)$ and so, combining the following inequalities gives
    \begin{align*}
        \bigg\| \int_0^A \underline{u}(a) \, da \bigg\|_{L^\infty(U)} 
        &\leq \int_0^A \| \underline{u}(a) \|_{L^\infty(U)} \, da &\text{\textsection 2 Thm. 4 of \cite{diestel1977vector}}\\
        &\leq c_2 \int_0^A \| \underline{u}(a) \|_{H^1(U)} \, da & \text{by \eqref{Eqn: Applying GN inequality inside integral}} \\
        &\leq c_2 \sqrt{A} \| \underline{u} \|_{L^2(0,A;H^1(U))} \, & \text{by \eqref{Eqn: bounding L1(0,T;H1) by L2(0,T;H1)}}\\
        &\leq c_2 \sqrt{A} \|u\|_{W_2^{2,1}(U_A)} \, & \text{by \eqref{Eqn: bound on u in $L^2(0,T;H1(U))$}}
    \end{align*}
    thus establishing the estimate. 
\end{proof}

We can now combine Lemma \ref{Lemma: Bounding lambda in L infinity with the norm of rho in Sob} with an estimate on the solution $\rho$ of \eqref{Eqn: lambda steady state} to bound $\mathcal{F}(\lambda)$ uniformly in $L^\infty(U)$. Specifically we show:
\begin{proposition} \label{Prop: F uniformly bounded}
    Equation \eqref{Eqn: Definition of F mapping} defines a mapping $\mathcal{F}:L^\infty(U) \rightarrow L^\infty(U)$. Moreover this mapping is uniformly bounded in $L^\infty(U)$ as there exists some constant $C$ such that for any $\lambda \in L^\infty(U)$,
    \begin{align} \label{Eqn: uniform L infty bound on F(lambda)}
        \| \mathcal{F}(\lambda) \|_{L^\infty(U)} \leq C \,.
    \end{align}
\end{proposition}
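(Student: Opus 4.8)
The plan is to obtain \eqref{Eqn: uniform L infty bound on F(lambda)} by chaining three ingredients already in place: the solvability estimate \eqref{Eqn: bound on rho in Sob space} from Step~1, the Gagliardo--Nirenberg-type bound of Lemma \ref{Lemma: Bounding lambda in L infinity with the norm of rho in Sob}, and the uniform coefficient bounds of Proposition \ref{Prop: Uniform bounds on Lambda}. First, for a given $\lambda$ I would take $\rho\in W_2^{2,1}(U_A)$ to be the unique solution of \eqref{Eqn: lambda steady state} furnished by Step~1. This is legitimate for every $\lambda\in L^\infty(U)$, since $\Lambda,\Lambda'\in L^\infty(U)$ makes conditions (L1)--(L5) hold, and it already delivers the well-definedness half of the statement: the Bochner integral \eqref{Eqn: Definition of F mapping} exists and, by Lemma \ref{Lemma: Bounding lambda in L infinity with the norm of rho in Sob}, lands in $L^\infty(U)$.

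For the quantitative bound I would estimate the Bochner integral directly, pulling $\varM(a)\le\varM_{\max}$ out and then applying the pointwise-in-age inequality \eqref{Eqn: GN inequality on U}:
\begin{align*}
\|\mathcal{F}(\lambda)\|_{L^\infty(U)}
&= \Big\|\int_0^A \varM(a)\,\underline{\rho}(a)\,da\Big\|_{L^\infty(U)}
\le \varM_{\max}\int_0^A \|\underline{\rho}(a)\|_{L^\infty(U)}\,da \\
&\le \varM_{\max}\,c_2\int_0^A \|\underline{\rho}(a)\|_{H^1(U)}\,da
\le \varM_{\max}\,c_2\,\sqrt{A}\,\|\rho\|_{W_2^{2,1}(U_A)}\,,
\end{align*}
where the final two steps reuse exactly the chain in the proof of Lemma \ref{Lemma: Bounding lambda in L infinity with the norm of rho in Sob}, namely \eqref{Eqn: bounding L1(0,T;H1) by L2(0,T;H1)} and \eqref{Eqn: bound on u in $L^2(0,T;H1(U))$}. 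Pulling $\varM_{\max}$ out at the start is what keeps this clean: applying Lemma \ref{Lemma: Bounding lambda in L infinity with the norm of rho in Sob} to $u=\varM\rho$ instead would force one to control $\|\varM\rho\|_{W_2^{2,1}(U_A)}$, and hence $D_a(\varM\rho)=\varM'\rho+\varM D_a\rho$, which drags in the (bounded, since $\varM$ is Lipschitz) derivative $\varM'$; the route above sidesteps this entirely. Substituting \eqref{Eqn: bound on rho in Sob space} then yields
\begin{align*}
\|\mathcal{F}(\lambda)\|_{L^\infty(U)} \le \varM_{\max}\,c_2\,\sqrt{A}\,c_1\,\|\mu\|_{L^2(U)} =: C\,,
\end{align*}
which is finite because $\mu\in L^2(U)$ by (L5).

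The one step carrying genuine content, and which I would treat with care, is the uniformity of $C$. The constant $c_1$ in \eqref{Eqn: bound on rho in Sob space} depends on the coefficients of \eqref{Eqn: lambda steady state}, namely on $\Lambda$ and $\Lambda'$, so a priori $C=C(\lambda)$. This is exactly where Proposition \ref{Prop: Uniform bounds on Lambda} enters: on $K_1$ the coefficients are controlled by the $\lambda$-independent constants $2$ and $C_{\phi'}$, so $c_1$, and therefore $C$, may be chosen independently of $\lambda$. I would therefore be explicit that the uniformity is over $K_1$ --- the set on which the normalisation $\|\lambda\|_{L^1(U)}=1$ holds --- rather than over all of $L^\infty(U)$ with a single constant; over an arbitrary $L^\infty$ ball the identical argument gives a constant depending only on the radius (through $\|\lambda\|_{L^1(U)}\le 2\|\lambda\|_{L^\infty(U)}$), which is all the Schauder argument of Step~3 will require. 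Since the interpolation and parabolic-regularity inputs are otherwise routine, this uniform-coefficient estimate is the crux of the proposition.
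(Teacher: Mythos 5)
Your argument is correct and is essentially the paper's own proof: the same chain $\|\mathcal{F}(\lambda)\|_{L^\infty(U)} \le c_2 \sqrt{A}\,\varM_{\max}\,\|\rho\|_{W_2^{2,1}(U_A)} \le c_1 c_2 \sqrt{A}\,\varM_{\max}\,\|\mu\|_{L^2(U)}$ obtained from Lemma \ref{Lemma: Bounding lambda in L infinity with the norm of rho in Sob} together with the solvability estimate \eqref{Eqn: bound on rho in Sob space}. Your closing remark that $c_1$ is only shown to be $\lambda$-independent on $K_1$ via Proposition \ref{Prop: Uniform bounds on Lambda}, so that the uniformity asserted for all of $L^\infty(U)$ really holds set-by-set (which suffices for the Schauder argument on $K$), is a fair sharpening of a point the paper passes over silently.
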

\begin{proof}
    Let $\rho \in W_2^{2,1}(U_A)$ be the unique solution of \eqref{Eqn: lambda steady state} for a given $\lambda \in L^\infty(U)$ and $\underline{\rho}$ the corresponding function in $L^1(0,A;H^1(U))$. Then  
    \begin{align*}
        \| \mathcal{F}(\lambda) \|_{L^\infty(U)} 
        &= \bigg\| \int_0^A \varM(a)\,\underline{\rho}(a) \, da \,\bigg\|_{L^\infty(U)} \\
        &\leq \varM_{\max} \bigg\| \int_0^A \underline{\rho}(a) \, da \,\bigg\|_{L^\infty(U)} \\
        &\leq c_2 \sqrt{A} \varM_{\max}\, \| \rho \|_{W_2^{2,1}(U_A)} \,, & \text{by Lemma \ref{Lemma: Bounding lambda in L infinity with the norm of rho in Sob}}\\
        &\leq c_1 c_2 \sqrt{A} \varM_{\max}\, \| \mu \|_{L^2(U)} \,, & \text{by \eqref{Eqn: bound on rho in Sob space}}.
    \end{align*}
    Letting $C = c_1 c_2 \sqrt{A} \varM_{\max}\, \| \mu \|_{L^2(U)}$ completes the proof.
\end{proof}

Next we use properties of the PDE \eqref{Eqn: lambda steady state} to look for a subset $K \subset L^\infty(U)$ that is invariant under $\mathcal{F}$, with the goal of applying Schauder's Fixed Point Theorem to conclude the existence of a fixed point of $\mathcal{F}$ in $K$. Applying a maximum principle, for example Lemma 5 in Chapter 2 of \cite{friedman2008partial}, gives that $\rho \geq 0$ and so $\lambda \geq 0$ (almost everywhere). As \eqref{Eqn: lambda steady state} preserves mass we also have that $\|\lambda\|_1 = \|\mu\|_1 = 1$. Hence, recalling \eqref{Eqn: uniform L infty bound on F(lambda)}, the set $K \subset K_1$ given by 
\begin{align} \label{Eqn: Definition of K}
    K = \{ \lambda \in L^\infty(U) : \lambda \geq 0 \text{ a.e.}, \| \lambda \|_{L^1(U)}=1, \, \| \lambda \|_{L^\infty(U)}\leq C \}
\end{align}
is invariant under $\mathcal{F}$. A major obstacle is that, in the norm topology on $L^\infty(U)$, it is not clear if the set $K$ is compact. Alternatively we may be able to apply Schauder's Fixed Point Theorem in the weak* topology on $L^\infty(U)$. The following Proposition \ref{Prop: K weakly compact and weakly closed} and Proposition \ref{Prop: F weakly continuous} guarantee the necessary properties. 

\begin{proposition} \label{Prop: K weakly compact and weakly closed}
    The set $K$ is weakly compact and weakly closed in $L^\infty(U)$. \textit{(Proof in Appendix \ref{Appendix: Proofs}).}
\end{proposition}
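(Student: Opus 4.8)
The plan is to read the word ``weakly'' as referring to the weak* topology on $L^\infty(U)$, exploiting the identification $L^\infty(U) = \big(L^1(U)\big)^*$, which is valid because $U$ is a bounded interval and hence a finite (in particular $\sigma$-finite) measure space. With this identification in place, the strategy is to realise $K$ as a weak* closed subset of a weak* compact ball; both assertions of the Proposition then follow simultaneously, since in a Hausdorff topology a compact subset is automatically closed.

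First I would invoke the Banach--Alaoglu theorem to conclude that the closed ball $B_C := \{\lambda \in L^\infty(U) : \|\lambda\|_{L^\infty(U)} \leq C\}$ is weak* compact. Because $U$ is bounded, $L^1(U)$ is separable, so the weak* topology restricted to the bounded set $B_C$ is metrizable; this lets me carry out the remaining closedness checks with sequences rather than nets. Next I would write $K = B_C \cap N \cap H$, where $N = \{\lambda \in L^\infty(U) : \lambda \geq 0 \text{ a.e.}\}$ and $H = \{\lambda \in L^\infty(U) : \int_U \lambda \, dx = 1\}$, and verify that $N$ and $H$ are each weak* closed. For $H$, observe that the constant function $1$ lies in $L^1(U)$ (as $U$ has finite measure), so $\lambda \mapsto \int_U \lambda \, dx = \langle \lambda, 1 \rangle$ is weak* continuous by the very definition of the weak* topology; hence $H$, being the preimage of the single point $\{1\}$, is weak* closed. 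For $N$, if $\lambda_n \to \lambda$ in the weak* sense with each $\lambda_n \geq 0$, then for every nonnegative $g \in L^1(U)$ we have $\int_U \lambda \, g \, dx = \lim_n \int_U \lambda_n \, g \, dx \geq 0$; taking $g = \mathbf{1}_E$ for an arbitrary measurable $E \subset U$ (which lies in $L^1(U)$ by finiteness of the measure) forces $\lambda \geq 0$ a.e., so $N$ is weak* closed. Note also that on $N$ the constraint $\int_U \lambda \, dx = 1$ coincides with $\|\lambda\|_{L^1(U)} = 1$, so this decomposition does capture $K$ exactly.

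Finally I would conclude by pure topology: $K$ is the intersection of the weak* compact set $B_C$ with the two weak* closed sets $N$ and $H$, hence a weak* closed subset of a weak* compact set, and therefore itself weak* compact; being compact in the Hausdorff weak* topology, it is also weak* closed. The only genuine subtlety I anticipate is bookkeeping around the topology itself: confirming the duality $L^\infty(U) = (L^1(U))^*$ and the separability of $L^1(U)$ so that sequential arguments are legitimate, and checking that the almost-everywhere nonnegativity condition is preserved under weak* limits. Everything else reduces to a direct application of Banach--Alaoglu together with the observation that the two defining constraints of $K$ are, respectively, a level set of a weak* continuous linear functional and a cone closed under pairing against nonnegative $L^1$ functions.
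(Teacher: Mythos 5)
Your proposal is correct and follows essentially the same route as the paper: Banach--Alaoglu for the ball $\{\|\lambda\|_{L^\infty(U)}\le C\}$, then verification that nonnegativity and the unit-mass constraint are preserved under weak* limits by pairing against indicator functions and the constant function $1$. Your explicit remark that separability of $L^1(U)$ makes the weak* topology metrizable on bounded sets, so that sequential arguments are legitimate, is a small point the paper leaves implicit, but it does not change the substance of the argument.
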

We next show that the mapping $\mathcal{F}$ is continuous in the weak* topology. To do so we first require the following Lemmas, the proofs of which can be found in Appendix \ref{Appendix: Proofs}. 

\begin{lemma} \label{Lemma: Lipschitz continuity of Lambda}
    There exist constants $\ell_1, \ell_2 \geq 0$ such that for any $\lambda \in K$ the corresponding $\Lambda$ and $\Lambda'$ are Lipschitz continuous with Lipschitz constants $\ell_1, \ell_2$ respectively. 
\end{lemma}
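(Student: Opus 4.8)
The plan is to transfer the smoothness of $\phi$ (hence of $\varphi$) directly onto $\Lambda$ and $\Lambda'$ through their integral representations \eqref{Eqn: Definition of Lambda} and \eqref{Eqn: Definition of Lambda'}, estimating the increments $\Lambda(x_1)-\Lambda(x_2)$ and $\Lambda'(x_1)-\Lambda'(x_2)$ pointwise inside the integral and then using the normalisation $\|\lambda\|_{L^1(U)}=1$ to integrate out the dependence on $\lambda$. This last point is precisely what will make the Lipschitz constants uniform over $\lambda \in K$.

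First I would record the relevant regularity of $\varphi$. By Assumption \ref{Assumption group: Standard} we have $\phi \in C^2(\mathds{R})$, and since $\varphi(z)=\phi(z)\,z$ it follows that $\varphi \in C^2$ with $\varphi'(z)=\phi'(z)\,z+\phi(z)$, which is exactly the integrand appearing in \eqref{Eqn: Definition of Lambda'}. For $x,y\in U=(-1,1)$ the argument $y-x$ lies in the compact interval $[-2,2]$, so $\varphi'$ and $\varphi''$ are continuous and therefore bounded there. I would then set $\ell_1:=\sup_{z\in[-2,2]}|\varphi'(z)|$ and $\ell_2:=\sup_{z\in[-2,2]}|\varphi''(z)|$; by the mean value theorem $\varphi$ is $\ell_1$-Lipschitz and $\varphi'$ is $\ell_2$-Lipschitz on $[-2,2]$, and both constants depend only on $\phi$ and $U$.

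The estimates then follow immediately. For any $x_1,x_2\in U$,
\[
    |\Lambda(x_1)-\Lambda(x_2)| = \bigg| \int_U \big(\varphi(y-x_1)-\varphi(y-x_2)\big)\,\lambda(y)\,dy \bigg| \leq \ell_1\,|x_1-x_2| \int_U \lambda(y)\,dy = \ell_1\,|x_1-x_2|,
\]
using the Lipschitz bound on $\varphi$, the nonnegativity of $\lambda$, and $\|\lambda\|_{L^1(U)}=1$. Replacing $\varphi$ by $\varphi'$ and invoking \eqref{Eqn: Definition of Lambda'} yields the identical argument for $\Lambda'$, giving $|\Lambda'(x_1)-\Lambda'(x_2)|\leq \ell_2\,|x_1-x_2|$.

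I do not expect any genuine obstacle here. The only subtlety is the claimed uniformity over $K$, but this is automatic: the dependence on $\lambda$ enters only through the fixed normalised mass $\int_U \lambda=1$, so the constants $\ell_1,\ell_2$ do not see $\lambda$ at all. It is worth noting that the $L^\infty$ constraint $\|\lambda\|_{L^\infty(U)}\leq C$ built into the definition of $K$ is not required for this lemma; the $L^1$ normalisation alone suffices, so the same Lipschitz bounds in fact hold uniformly over the larger set $K_1$.
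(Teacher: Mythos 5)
Your proposal is correct and follows essentially the same route as the paper: both arguments deduce Lipschitz continuity of $\varphi$ and $\varphi'$ from $\phi\in C^2$, pull the increment inside the integral, and use $\lambda\geq 0$ with $\|\lambda\|_{L^1(U)}=1$ to obtain constants independent of $\lambda$. Your additional remarks (the explicit choice $\ell_1=\sup|\varphi'|$, $\ell_2=\sup|\varphi''|$ on $[-2,2]$, and the observation that the bound already holds on the larger set $K_1$) are accurate refinements of the paper's argument rather than a different method.
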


\begin{lemma} \label{Lemma: Pointwise plus Lipschitz implies L infty}
    Let $U\subset\mathds{R}$ be a bounded interval and $f^n$ a sequence of functions $f^n:\overline{U}\rightarrow\mathds{R}$ that are uniformly Lipschitz continuous with Lipschitz constant $L$. Assume that $f^n$ converges pointwise everywhere in $\overline{U}$ to a function $f:\overline{U}\rightarrow\mathds{R}$ that is also Lipschitz continuous with Lipschitz constant $L$. Then $f^n$ converges to $f$ in $L^\infty(U)$. 
\end{lemma}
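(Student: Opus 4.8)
The plan is to recognise that a uniform Lipschitz constant $L$ makes the family $\{f^n\}$ (together with the limit $f$) equicontinuous, and to combine this with pointwise convergence on the compact set $\overline{U}$ via a finite-net argument to upgrade to uniform convergence. Since each $f^n$ and $f$ are continuous on $\overline{U}$, convergence in $L^\infty(U)$ is equivalent to uniform convergence on $\overline{U}$: the essential supremum of a continuous function over the open interval $U$ coincides with its genuine supremum over the closure $\overline{U}$. It therefore suffices to show that $\sup_{\overline{U}} |f^n - f| \to 0$ as $n \to \infty$.

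First I would fix $\varepsilon > 0$ and dispose of the trivial case $L = 0$ (in which every function is constant, so pointwise convergence already gives the result). Assuming $L > 0$, set $\delta = \varepsilon/(3L)$. Since $\overline{U}$ is compact, I would choose finitely many points $x_1, \dots, x_m \in \overline{U}$ forming a $\delta$-net, so that every $x \in \overline{U}$ lies within distance $\delta$ of some $x_k$.

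Next, using pointwise convergence at each of the finitely many net points, I would select $N$ so that $|f^n(x_k) - f(x_k)| < \varepsilon/3$ for all $k \in \{1,\dots,m\}$ and all $n \geq N$. Then for an arbitrary $x \in \overline{U}$, picking a net point $x_k$ with $|x - x_k| < \delta$ and applying the triangle inequality together with the uniform Lipschitz bounds on both $f^n$ and $f$, one obtains
\begin{align*}
    |f^n(x) - f(x)| \leq |f^n(x) - f^n(x_k)| + |f^n(x_k) - f(x_k)| + |f(x_k) - f(x)| \leq L\delta + \tfrac{\varepsilon}{3} + L\delta = \varepsilon
\end{align*}
for all $n \geq N$. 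Taking the supremum over $x \in \overline{U}$ gives uniform convergence, and hence convergence in $L^\infty(U)$.

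The argument is essentially the classical fact that equicontinuity plus pointwise convergence on a compact set yields uniform convergence, so I do not anticipate a genuine obstacle. The only point requiring care is the identification of $L^\infty(U)$ convergence with uniform convergence on $\overline{U}$, which hinges on the continuity of all functions involved so that essential suprema agree with true suprema; beyond this the proof amounts to bookkeeping the constants in the standard $\varepsilon/3$ splitting.
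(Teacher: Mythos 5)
Your proposal is correct and follows essentially the same argument as the paper: a finite net on the compact set $\overline{U}$, the $\varepsilon/3$-type triangle inequality splitting using the uniform Lipschitz bounds, and pointwise convergence at the finitely many net points to conclude uniform convergence (the paper uses radius $\varepsilon/(4L)$ rather than $\varepsilon/(3L)$, which is immaterial). Your added remarks on the $L=0$ case and on identifying the essential supremum with the true supremum for continuous functions are minor refinements the paper leaves implicit.
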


\begin{proposition} \label{Prop: F weakly continuous}
    The mapping $\mathcal{F}:K \rightarrow K$ is continuous in the weak* topology on $L^\infty(U)$. 
\end{proposition}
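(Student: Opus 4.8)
The plan is to prove \emph{sequential} weak* continuity, which is equivalent to weak* continuity on $K$: since $K$ is a norm-bounded subset of $L^\infty(U)=(L^1(U))^*$ and $L^1(U)$ is separable, the weak* topology restricted to $K$ is metrizable (and $K$ is weak* compact by Proposition \ref{Prop: K weakly compact and weakly closed}). So I take a sequence $\lambda^n \rightharpoonup^* \lambda$ in $K$, where the limit again lies in $K$ by weak* closedness, and aim to show $\mathcal{F}(\lambda^n)\to\mathcal{F}(\lambda)$; in fact I will obtain the stronger conclusion of norm convergence in $L^\infty(U)$, which immediately implies weak* convergence. The first step transfers the convergence from $\lambda^n$ to the coefficients $\Lambda^n$ and $(\Lambda^n)'$ of the linear problem \eqref{Eqn: lambda steady state}. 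For each fixed $x$, the kernels $y\mapsto\varphi(y-x)$ and $y\mapsto \phi'(y-x)(y-x)+\phi(y-x)$ appearing in \eqref{Eqn: Definition of Lambda} and \eqref{Eqn: Definition of Lambda'} are continuous, hence lie in $L^1(U)$, so weak* convergence of $\lambda^n$ gives the pointwise limits $\Lambda^n(x)\to\Lambda(x)$ and $(\Lambda^n)'(x)\to\Lambda'(x)$ for every $x\in U$. By Lemma \ref{Lemma: Lipschitz continuity of Lambda} the families $\{\Lambda^n\}$ and $\{(\Lambda^n)'\}$ are uniformly Lipschitz and the limits $\Lambda,\Lambda'$ are Lipschitz with the same constants, so Lemma \ref{Lemma: Pointwise plus Lipschitz implies L infty} upgrades pointwise convergence to $\Lambda^n\to\Lambda$ and $(\Lambda^n)'\to\Lambda'$ in $L^\infty(U)$.

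Second, I will show that the solution operator of \eqref{Eqn: lambda steady state} is stable with respect to its coefficient. Let $\rho^n$ and $\rho$ be the unique $W_2^{2,1}(U_A)$ solutions associated with $\Lambda^n$ and $\Lambda$. Since \eqref{Eqn: lambda steady state PDE} is linear in $\rho$, the difference $w^n:=\rho^n-\rho$ solves the same linear parabolic problem with coefficient $\Lambda^n$, zero initial datum, homogeneous no-flux boundary conditions, and the divergence-form source $-\partial_x\big(\rho\,(\Lambda^n-\Lambda)\big)$. I will apply the a priori estimate of Theorem 9.1 of \cite{ladyzhenskaia1968linear} --- this time retaining the $\Phi$-term rather than setting it to zero as in \eqref{Eqn: bound on rho in Sob space} --- to bound $\|w^n\|_{W_2^{2,1}(U_A)}$ by a constant times $\|\rho\,(\Lambda^n-\Lambda)\|_{L^2(U_A)}$. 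Expanding $\partial_x(\rho(\Lambda^n-\Lambda))=(\Lambda^n-\Lambda)\,\partial_x\rho+\rho\,((\Lambda^n)'-\Lambda')$ and using $\rho,\partial_x\rho\in L^2(U_A)$ together with the $L^\infty$ convergence of $\Lambda^n$ and $(\Lambda^n)'$ from the first step, this source tends to zero, whence $\|w^n\|_{W_2^{2,1}(U_A)}\to 0$. The constant can be taken independent of $n$ because, by Proposition \ref{Prop: Uniform bounds on Lambda}, the coefficients $\Lambda^n$ are uniformly bounded on $K$.

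Finally, applying Lemma \ref{Lemma: Bounding lambda in L infinity with the norm of rho in Sob} to $w^n=\rho^n-\rho$ and using $\varM\le\varM_{\max}$ gives
\begin{align*}
    \|\mathcal{F}(\lambda^n)-\mathcal{F}(\lambda)\|_{L^\infty(U)}
    = \Big\| \int_0^A \varM(a)\,\underline{w^n}(a)\,da \Big\|_{L^\infty(U)}
    \leq \varM_{\max}\, c_2\sqrt{A}\,\|w^n\|_{W_2^{2,1}(U_A)} \longrightarrow 0 \,,
\end{align*}
so $\mathcal{F}(\lambda^n)\to\mathcal{F}(\lambda)$ in $L^\infty(U)$, and hence weak*. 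I expect the main obstacle to be the stability estimate of the second step: one must verify that the difference equation genuinely fits the hypotheses of the cited existence/estimate theorem with the source placed in divergence form, and that the resulting constant is uniform in $n$. An equivalent route, should quoting the estimate with an explicit source term prove awkward, is a compactness argument: the uniform bound \eqref{Eqn: bound on rho in Sob space} yields a weakly convergent subsequence $\rho^{n_k}\rightharpoonup\rho^*$ in $W_2^{2,1}(U_A)$, one passes to the limit in the weak formulation \eqref{Eqn: Weak formulation of lambda steady state eq} (the only delicate term $\rho^n\Lambda^n$ converging because strong $L^\infty$ convergence of $\Lambda^n$ pairs with weak $L^2$ convergence of $\rho^n$, and traces passing to the limit by continuity of the trace operator), and uniqueness of the solution for $\lambda$ forces $\rho^*=\rho$ and convergence of the whole sequence.
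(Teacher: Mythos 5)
Your overall strategy coincides with the paper's: establish pointwise convergence of $\Lambda^n$ and $(\Lambda^n)'$ by pairing $\lambda^n$ with the $L^1$ kernels, upgrade to convergence in $L^\infty(U)$ via Lemmas \ref{Lemma: Lipschitz continuity of Lambda} and \ref{Lemma: Pointwise plus Lipschitz implies L infty}, prove a stability estimate for the linear problem \eqref{Eqn: lambda steady state} applied to the difference of solutions using Theorem 9.1 of \cite{ladyzhenskaia1968linear}, and conclude with Lemma \ref{Lemma: Bounding lambda in L infinity with the norm of rho in Sob} that $\mathcal{F}(\lambda^n)\to\mathcal{F}(\lambda)$ in norm, hence weakly. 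The opening remark on metrizability of the weak* topology on the bounded set $K$, which justifies working with sequences, is a worthwhile point that the paper leaves implicit.

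There is, however, one genuine gap in the stability step: your assertion that $w^n=\rho^n-\rho$ satisfies \emph{homogeneous} no-flux boundary conditions is false. Subtracting the Robin-type conditions $\rho^n\Lambda^n-\tfrac{\sigma^2}{2}\partial_x\rho^n=0$ and $\rho\,\Lambda-\tfrac{\sigma^2}{2}\partial_x\rho=0$ at $x=\pm1$ and writing $\rho^n\Lambda^n-\rho\Lambda=w^n\Lambda^n+\rho(\Lambda^n-\Lambda)$ gives
\begin{align*}
    w^n\,\Lambda^n-\frac{\sigma^2}{2}\partial_x w^n=-\rho\,(\Lambda^n-\Lambda) \qquad\text{on } S_A\,,
\end{align*}
so the difference problem carries an inhomogeneous boundary datum, and this is precisely the delicate part of the argument. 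The a priori estimate controls this datum through its $W_2^{3/2,3/4}(S_A)$ norm; bounding that norm requires the trace inequality (Lemma 3.4 of \cite{ladyzhenskaia1968linear}) applied to $\rho(\Lambda^n-\Lambda)\in W^{2,1}_2(U_A)$, whose expansion brings in $(\Lambda^n)''-\Lambda''$. You therefore also need $(\Lambda^n)''\to\Lambda''$ in $L^\infty(U)$ --- obtained by the same duality-plus-Lipschitz argument --- which your proposal never establishes because the homogeneity assumption hides the term that forces it. Your treatment of the interior source and the uniformity of the constant via Proposition \ref{Prop: Uniform bounds on Lambda} are fine. The fallback compactness route you sketch (uniform bound \eqref{Eqn: bound on rho in Sob space}, weak limit in $W^{2,1}_2(U_A)$, passage to the limit in \eqref{Eqn: Weak formulation of lambda steady state eq}, uniqueness) would sidestep the boundary issue entirely and still yields weak* convergence of $\mathcal{F}(\lambda^n)$, which is all the proposition requires; but as the primary argument is written, the stability estimate does not go through.
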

\begin{proof} (Outline)
    The full proof can be found in Appendix \ref{Appendix: Proofs}.

    Let $\lambda^n$ be a sequence in $K$ that converges weakly to $\lambda \in L^\infty(U)$. Since $K$ is weakly closed, $\lambda \in K$. As for any $x\in \overline{U}$ the functions $\xi(y) = \phi(y-x)\,(y-x)$ and $\xi(y) = -\phi'(y-x)\,(y-x) - \phi(y-x)$ are both elements of $L^1(U)$, $\Lambda^n(x)$ and $(\Lambda^n)'(x)$ converge pointwise to $\Lambda(x)$ and $\Lambda'(x)$ respectively everywhere in $\overline{U}$. Applying Lemmas \ref{Lemma: Lipschitz continuity of Lambda} and \ref{Lemma: Pointwise plus Lipschitz implies L infty} gives that $\Lambda^n(x)$ and $(\Lambda^n)'(x)$ converge to $\Lambda(x)$ and $\Lambda'(x)$ respectively in $L^\infty(U)$. 

    For each $n \in \mathds{N}$ let $\rho^n \in W_2^{2,1}(U_A)$ denote the solution to \eqref{Eqn: lambda steady state} using interaction density $\lambda^n$ and similarly let $\rho\in W_2^{2,1}(U_A)$ be the solution using $\lambda$. By forming an initial-boundary value problem for $\rho^n - \rho$ and again applying Theorem 9.1 from Chapter IV Section 9 of \cite{ladyzhenskaia1968linear} we show that the convergence of $\Lambda^n$ and $(\Lambda^n)'$ implies that $\rho^n$ converges to $\rho$ in $W^{2,1}_2(U_A)$. 

    We may then apply Lemma \ref{Lemma: Bounding lambda in L infinity with the norm of rho in Sob} to show that $\mathcal{F}(\lambda^n)$ converges to $\mathcal{F}(\lambda)$ in the norm topology on $L^\infty(U)$ and therefore also in the weak topology on $L^\infty(U)$. 
\end{proof}

This concludes all the necessary steps to show the following Theorem. 
\begin{theorem} \label{Thm: Existence of steady states}
    Let Assumption \ref{Assumption group: Standard} and Assumption \ref{Assumption group: Fixed point} hold and assume that $\mu \in L^2(U)$. Then there exists at least one solution $\rho\in W_2^{2,1}(U_A)$ to the steady state equation \eqref{Eqn: Steady state}. 
\end{theorem}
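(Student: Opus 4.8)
The plan is to assemble the ingredients established in Steps 1--3 and conclude via Schauder's fixed point theorem, in the form valid for locally convex topological vector spaces (the Schauder--Tychonoff theorem). The ambient space is $L^\infty(U)$ equipped with its weak* topology, under which it is a locally convex TVS. Recall the correspondence set up around \eqref{Eqn: lambda consistency equation}: a fixed point $\lambda^\ast = \mathcal{F}(\lambda^\ast)$ corresponds exactly to a solution of the steady state problem \eqref{Eqn: Steady state}. Indeed, given such a $\lambda^\ast$, the associated solution $\tilde\rho$ of \eqref{Eqn: lambda steady state} satisfies $\int_0^A \varM(b)\,\tilde\rho(b,y)\,db = \lambda^\ast(y)$, and substituting this identity into the definition \eqref{Eqn: Definition of Lambda} of $\Lambda$ reinstates the full nonlinear flux, turning \eqref{Eqn: lambda steady state PDE} back into \eqref{Eqn: Steady state PDE}. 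Hence it suffices to produce a fixed point of $\mathcal{F}$ in $K$.

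First I would verify the hypotheses of the fixed point theorem for $\mathcal{F}:K \to K$. Proposition \ref{Prop: K weakly compact and weakly closed} supplies weak* compactness of $K$, so it remains to note that $K$ is convex and nonempty. Convexity is immediate: for $\lambda_1,\lambda_2 \in K$ and $t\in[0,1]$ the combination $t\lambda_1 + (1-t)\lambda_2$ is nonnegative a.e., has $L^1(U)$ norm equal to $t + (1-t) = 1$, and has $L^\infty(U)$ norm at most $tC + (1-t)C = C$. For nonemptiness, observe that $K_1$ contains the uniform density on $U$; applying $\mathcal{F}$ to it and invoking Proposition \ref{Prop: F uniformly bounded} together with the positivity and mass-conservation properties of \eqref{Eqn: lambda steady state} shows $\mathcal{F}$ sends this element into $K$, so $K \neq \emptyset$.

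That $\mathcal{F}$ maps $K$ into itself is the invariance recorded after \eqref{Eqn: Definition of K}, and Proposition \ref{Prop: F weakly continuous} supplies the required weak* continuity of $\mathcal{F}:K \to K$. With all hypotheses in place, the Schauder--Tychonoff theorem yields some $\lambda^\ast \in K$ with $\mathcal{F}(\lambda^\ast) = \lambda^\ast$. Taking $\rho = \tilde\rho$ to be the unique solution of \eqref{Eqn: lambda steady state} associated with $\lambda = \lambda^\ast$, guaranteed by Step 1, we obtain $\rho \in W_2^{2,1}(U_A)$ solving \eqref{Eqn: Steady state}, which finishes the argument.

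I expect no genuine obstacle to remain at this final stage, since the substantive difficulties have been front-loaded into the preceding propositions. The one point worth flagging is that $K$ is in general \emph{not} compact in the $L^\infty(U)$ norm topology, so the classical Schauder theorem does not apply directly; the entire construction is arranged so that both compactness and continuity hold in the weak* topology instead. It is precisely for this reason that Proposition \ref{Prop: F weakly continuous} --- the most delicate ingredient, resting on the Lipschitz estimates of Lemmas \ref{Lemma: Lipschitz continuity of Lambda} and \ref{Lemma: Pointwise plus Lipschitz implies L infty} --- is required rather than mere norm continuity.
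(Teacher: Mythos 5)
Your proposal is correct and follows essentially the same route as the paper: both reduce the steady state problem to a fixed point of $\mathcal{F}$ on the convex set $K$ and apply Schauder's fixed point theorem in the weak* topology on $L^\infty(U)$, citing Propositions \ref{Prop: K weakly compact and weakly closed} and \ref{Prop: F weakly continuous} for compactness and continuity. Your explicit verification of convexity and nonemptiness of $K$, and your remark that the Schauder--Tychonoff form of the theorem is needed since $K$ is not norm-compact, are details the paper leaves implicit but do not constitute a different argument.
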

\begin{proof}
    Each solution to the steady state equation can be uniquely characterised by a fixed point of the mapping $\mathcal{F}$ defined in \eqref{Eqn: Definition of F mapping}. We have shown the set $K\subset L^\infty(U)$, defined in \eqref{Eqn: Definition of K}, is invariant under $\mathcal{F}$. The set $K$ is also convex and, by Proposition \ref{Prop: K weakly compact and weakly closed}, weakly compact and weakly closed. In addition Proposition \ref{Prop: F weakly continuous} shows that $\mathcal{F}$ is weakly continuous on $K$. Hence applying Schauder's Fixed Point Theorem to $\mathcal{F}:K\rightarrow K$ with the weak topology on $L^\infty(U)$ gives the existence of at least one fixed point of $\mathcal{F}$ in the set $K$, and thus the existence of at least one solution to the steady state equation \eqref{Eqn: Steady state}. 
\end{proof}

A natural question to ask next is whether such a steady state is unique. More specifically, when $\tau$ is large individuals are quickly replaced and the opinion distribution cannot deviate significantly from the age-zero distribution $\mu$, so we may expect uniqueness of the steady state. Indeed the following Proposition \ref{Prop: F Lipschitz continuous}, proven in Appendix \ref{Appendix: Proofs}, gives that the mapping $\mathcal{F}$ is Lipschitz continuous on $K$. 

\begin{proposition} \label{Prop: F Lipschitz continuous}
    Under the assumptions of Theorem \ref{Thm: Existence of steady states} the mapping $\mathcal{F}:K\rightarrow K$ is Lipschitz continuous. Specifically there exists some constant $c$ depending on $U$, $\phi$, $A$ and $\tau$ but independent of $\lambda$, such that for all $\lambda_1,\lambda_2 \in K$, 
    \begin{align} \label{Eqn: Lipschitz continuity of F}
        \big\| \mathcal{F}(\lambda_1) - \mathcal{F}(\lambda_2) \big\|_{L^\infty(U)} 
        \leq  c \, \big(A^{1/2}+ A^{-1/4} \big)\, \varM_{\max} \, \big\|\lambda_1 - \lambda_2\big\|_{L^\infty(U)} \,.
    \end{align}
\end{proposition}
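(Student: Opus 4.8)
The plan is to produce a quantitative version of the difference-equation argument already sketched for Proposition~\ref{Prop: F weakly continuous}: instead of merely showing convergence, I would track the dependence of every constant on $\|\lambda_1 - \lambda_2\|_{L^\infty(U)}$. Fix $\lambda_1,\lambda_2 \in K$ and let $\rho_1,\rho_2 \in W_2^{2,1}(U_A)$ be the unique solutions of \eqref{Eqn: lambda steady state} with associated $\Lambda_1,\Lambda_2$ from \eqref{Eqn: Definition of Lambda}. Setting $w = \rho_1 - \rho_2$ and using the identity $\rho_1\Lambda_1 - \rho_2\Lambda_2 = w\,\Lambda_2 + \rho_1(\Lambda_1 - \Lambda_2)$, I would check that $w$ solves an initial-boundary value problem of exactly the form \eqref{Eqn: lambda steady state} with coefficient $\Lambda_2$, zero initial datum $w(0,\cdot) = \mu - \mu = 0$, and an inhomogeneity driven entirely by $\Lambda_1 - \Lambda_2$. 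Concretely, there is a bulk source $f = -\partial_x\big(\rho_1(\Lambda_1 - \Lambda_2)\big)$ together with an inhomogeneous lateral boundary term proportional to the trace $\rho_1(a,\pm1)\big(\Lambda_1(\pm1) - \Lambda_2(\pm1)\big)$; the latter appears precisely because the fluxes $F[\rho_1]$ and $F[\rho_2]$ vanish at $x=\pm1$ for their own, different, coefficients, so their difference does not.

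The first main step is to apply Theorem~9.1 of Chapter~IV, \textsection 9 of \cite{ladyzhenskaia1968linear} to the problem for $w$, just as in Step~1, but now with nonzero free term $f$ and nonzero boundary data $\Phi$. Since the coefficients $\Lambda_2$ and $\Lambda_2'$ are controlled uniformly over $K$ by Proposition~\ref{Prop: Uniform bounds on Lambda}, the constant furnished by the theorem can be chosen independently of $\lambda_1,\lambda_2$, giving an estimate of the form $\|w\|_{W_2^{2,1}(U_A)} \leq C\big(\|f\|_{L^2(U_A)} + \|\Phi\|_{\mathrm{bdry}}\big)$, where $\|\Phi\|_{\mathrm{bdry}}$ denotes the appropriate anisotropic trace norm on the lateral boundary $\{x=\pm1\}$.

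The second step is to estimate both inhomogeneities by $\|\lambda_1 - \lambda_2\|_{L^\infty(U)}$. Expanding $f = -(\partial_x\rho_1)(\Lambda_1 - \Lambda_2) - \rho_1(\Lambda_1' - \Lambda_2')$ and repeating the argument of Proposition~\ref{Prop: Uniform bounds on Lambda}, the differences $\|\Lambda_1 - \Lambda_2\|_{L^\infty(U)}$ and $\|\Lambda_1' - \Lambda_2'\|_{L^\infty(U)}$ are each bounded by a constant times $\|\lambda_1 - \lambda_2\|_{L^1(U)} \leq 2\|\lambda_1 - \lambda_2\|_{L^\infty(U)}$, so $\|f\|_{L^2(U_A)} \leq c\,\|\rho_1\|_{W_2^{2,1}(U_A)}\,\|\lambda_1 - \lambda_2\|_{L^\infty(U)}$, and the a priori bound \eqref{Eqn: bound on rho in Sob space} replaces $\|\rho_1\|_{W_2^{2,1}(U_A)}$ by $c_1\|\mu\|_{L^2(U)}$. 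The boundary term is handled by bounding the trace of $\rho_1$ on $\{x=\pm1\}$ by $\|\rho_1\|_{W_2^{2,1}(U_A)}$ and multiplying by $|\Lambda_1(\pm1) - \Lambda_2(\pm1)| \leq 4\|\lambda_1 - \lambda_2\|_{L^\infty(U)}$. I expect the $A^{1/2}$ term in \eqref{Eqn: Lipschitz continuity of F} to originate from the bulk contribution after the H\"older/Bochner estimates in the age variable (as in \eqref{Eqn: bounding L1(0,T;H1) by L2(0,T;H1)}), and the $A^{-1/4}$ term to come from the parabolic trace inequality on the age interval $(0,A)$.

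Finally, I would return from $w$ to $\mathcal{F}(\lambda_1) - \mathcal{F}(\lambda_2) = \int_0^A \varM(a)\,\underline{w}(a)\,da$ exactly as in Proposition~\ref{Prop: F uniformly bounded}: Lemma~\ref{Lemma: Bounding lambda in L infinity with the norm of rho in Sob} yields $\|\mathcal{F}(\lambda_1) - \mathcal{F}(\lambda_2)\|_{L^\infty(U)} \leq c_2\sqrt{A}\,\varM_{\max}\,\|w\|_{W_2^{2,1}(U_A)}$, and combining this with the preceding bounds gives \eqref{Eqn: Lipschitz continuity of F}, the dependence on $\tau$ entering because, after dividing through by $\tau$, the coefficients of the equation for $w$ (and hence the Theorem~9.1 constant) depend on $\tau$. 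The hard part will be the boundary term: in Step~1 the flux boundary condition was homogeneous, whereas $w$ inherits a genuinely inhomogeneous lateral condition, and estimating it correctly — in particular pinning down the exact fractional power of $A$ it contributes through the anisotropic parabolic trace theorem — is the delicate point of the argument and the place where the bookkeeping of constants must be done most carefully.
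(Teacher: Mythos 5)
Your proposal follows essentially the same route as the paper: form the difference equation for $w=\rho_1-\rho_2$ with coefficient $\Lambda_2$, bulk source $-\partial_x\big(\rho_1(\Lambda_1-\Lambda_2)\big)$ and inhomogeneous lateral flux data, apply Theorem 9.1 of \cite{ladyzhenskaia1968linear} with constants uniform over $K$, bound the inhomogeneities by $\|\lambda_1-\lambda_2\|_{L^\infty(U)}$ via the a priori bound \eqref{Eqn: bound on rho in Sob space}, and finish with Lemma \ref{Lemma: Bounding lambda in L infinity with the norm of rho in Sob}; your attribution of the $A^{1/2}$ and $A^{-1/4}$ factors is also exactly how they arise in the paper. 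The only detail you leave slightly open, the anisotropic boundary norm, is handled in the paper by bounding $\|\Phi\|_{W_2^{3/2,3/4}(S_A)}$ by $\|\Phi\|_{W_2^{2,1}(U_A)}$ (Lemma 3.4 of \cite{ladyzhenskaia1968linear}), which brings in $\|\Lambda_1''-\Lambda_2''\|_\infty$ but is controlled identically to the lower-order differences.
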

However, it is not possible to bound $c_8$ such that this Lipschitz constant is smaller than $1$ as $\tau\rightarrow\infty$. Therefore we cannot conclude that $\mathcal{F}$ is a contraction for sufficiently large $\tau$ (which would allow an application of the Banach Fixed-Point Theorem). Section \ref{Section: non-uniqueness} explores numerically an example in which steady states are and are not unique as $\tau$ varies. 

\subsection{Stationary age-zero density} \label{Section: Stationary age-zero distribution}

We first describe a specific situation in which it is straightforward to identify a steady state. Assume that the age zero distribution $\mu$ is a steady state of the classical mean-field PDE. That is,
\begin{align} \label{Eqn: steady state of classical MFL}
    \partial_x \Bigg( \mu(x) \bigg( \int_U \varphi(y - x) \, \mu(x) \, dy \, \bigg) \Bigg) - \, \frac{\sigma^2}{2} \partial_{x^2} \mu(x) &= 0 \,,
\end{align}
coupled with the appropriate no-flux boundary conditions. Define $\rho^*(a,x) = \mu(x)$ for all $a \in [0,A]$. Then we have,
\begin{align*}
    & \tau \partial_a\rho^*(a,x) + \partial_x \Bigg( \rho^*(a,x) \bigg( \int_U \int_0^A \varM(a,b) \, \varphi(y - x) \, \rho^*(b,y) \, db \, dy \, \bigg) \Bigg) - \, \frac{\sigma^2}{2} \partial_{x^2} \rho^*(a,x) \\
    &= \tau \partial_a\mu(x) + \partial_x \Bigg( \mu(x) \bigg( \int_U \int_0^A \varM(a,b) \, \varphi(y - x) \, \mu(y) \, db \, dy \, \bigg) \Bigg) - \, \frac{\sigma^2}{2} \partial_{x^2} \mu(x) \\
    &= \partial_x \Bigg( \mu(x) \bigg( \int_U \, \varphi(y - x) \, \mu(y) \, dy \bigg) \bigg( \int_0^A \varM(a,b) \, db \, \bigg) \Bigg) - \, \frac{\sigma^2}{2} \partial_{x^2} \mu(x) \\
    &= \partial_x \Bigg( \mu(x) \bigg( \int_U \, \varphi(y - x) \, \mu(y) \, dy \bigg) \Bigg) - \, \frac{\sigma^2}{2} \partial_{x^2} \mu(x) \\
    &= 0 \,,
\end{align*}
hence \eqref{Eqn: Steady state PDE} is satisfied. In addition
\begin{align*}
    \rho^*(0,x) - \mu(x) \, \int_U \rho^*(1,y) \, dy  &= \mu(x) - \mu(x) \, \int_U \rho^*(1,y) \, dy = \mu(x) - \mu(x) = 0\,
\end{align*}
hence \eqref{Eqn: Steady state BC} is satisfied. Finally, the no-flux boundary conditions for $\mu$ directly imply those for $\rho^*$, hence all conditions are satisfied for $\rho^*$ to be a steady state. We now use this context to provide a specific example of non-uniqueness and demonstrate the impact of changing $\tau$. 

\subsection{Non-uniqueness} \label{Section: non-uniqueness}

As discussed at the end of Section \ref{Section: Existence of steady states} we cannot show that the steady state is unique. That is, for given $\mu$ and $\phi$, there may be multiple $\lambda\in K$ such that the corresponding solution $\rho \in W_2^{2,1}(U_A)$ of \eqref{Eqn: lambda steady state} also satisfies $\lambda = \mathcal{F}(\lambda)$. In Figure \ref{fig:nonunique_stationary_distribution} below we give a specific example in the setting discussed in the previous Section \ref{Section: Stationary age-zero distribution}, in which $\mu$ is stationary in the classical mean-field limit. 

Consider an age-zero distribution $\mu^{(2)}$ satisfying \eqref{Eqn: steady state of classical MFL} in which the population is split into two equal clusters (as in e.g. \cite{goddard2022noisy}). As in the previous Section \ref{Section: Stationary age-zero distribution} define $\rho^*(a,x) = \mu^{(2)}(x)$ for all $a\in[0,A]$. As this is a steady state we also have that $\mathcal{F}(\mu^{(2)}) = \mu^{(2)}$. 

Denote by $\mu^{(1)}$ a distribution satisfying \eqref{Eqn: steady state of classical MFL} with only a single cluster. We continue to use the $\mu^{(2)}$ as the age-zero distribution but instead take $\lambda = \mu^{(1)}$. Using a numerical approximation of $\mathcal{F}$, described in Appendix \ref{Appendix: Numerics for steady states}, we begin from $\lambda_0 = \mu^{(1)}$ and iteratively apply this mapping. We find that we do not converge towards $\mu^{(2)}$ but instead reach a different steady state in which the population is encouraged towards consensus at higher ages.

An example is shown in Figure \ref{fig:nonunique_stationary_distribution}, using a smoothed bounded confidence interaction function with $r_1 = 0.5, r_2 = 0.6$, $\tau = 0.15$, $\sigma = 0.05$ and $M\equiv1$. Heatmaps show the stationary distribution over $U_A$, with age on the horizontal axis and opinion on the vertical axis. In the left panel $\lambda_0 = \mu^{(2)}$ and the population remains in these two stable clusters. However in the right panel $\lambda$ is given by the fixed point after iterating from $\lambda_0 = \mu^{(1)}$ and a different stationary distribution is found in which the population merges into a single cluster as individuals age. This shows that in certain cases multiple stable behaviours could be observed on the macroscopic level.  

\begin{figure}[ht!]
    \centering
    \includegraphics[width=1\linewidth]{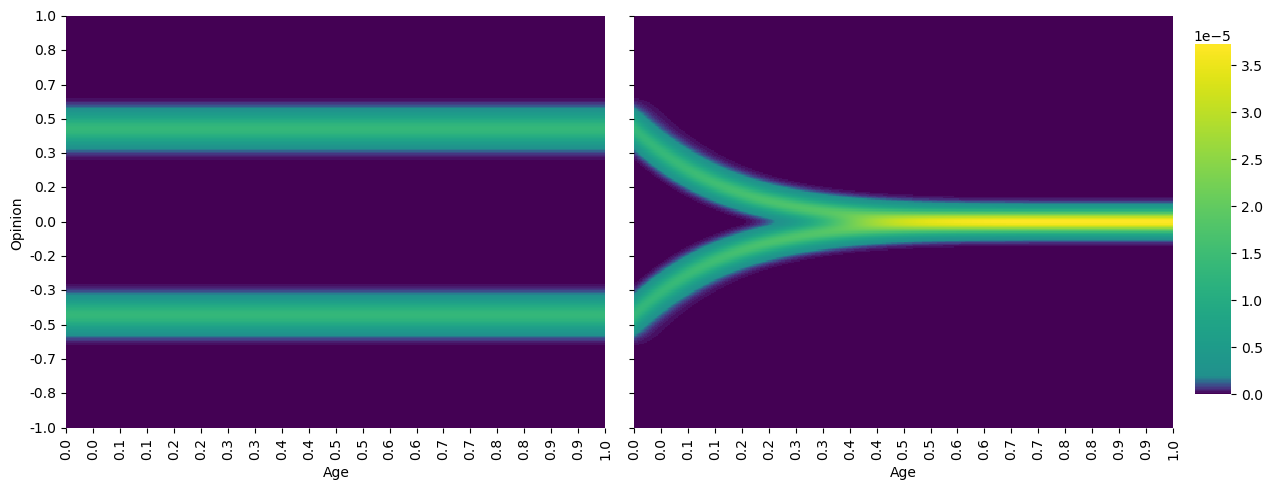}
    \caption{Showing two alternative steady states of \eqref{Eqn: PDE with ageing} over the joint age-opinion space. In both cases the interaction function $\phi$ is a smooth bounded confidence \eqref{eqn: phi for examples} with $r_1 = 0.5, r_2 = 0.6$, the diffusion coefficient $\sigma = 0.05$, the age-interaction kernel $M \equiv 1$, and the age-zero distribution $\mu = \mu^{(2)}$. The first corresponds to a fixed point of the mapping \eqref{Eqn: Definition of F mapping} with $\lambda_0 = \mu^{(2)}$, while the second corresponds to a fixed point in which $\lambda_0= \mu^{(1)}$ has a single cluster.}
    \label{fig:nonunique_stationary_distribution}
\end{figure}

We next investigate the effect of increasing $\tau$. For each value of $\tau$ we begin at $\lambda_0 = \mu^{(1)}$ and apply the numerical approximation of $\mathcal{F}$ until a fixed point is reached. The final value of $\lambda$ is shown for various $\tau$ in Figure \ref{fig: changing tau}. For small values below $\tau = 0.23$ (including the example shown in the right of Figure \ref{fig:nonunique_stationary_distribution}), $\lambda$ has a single peak, indicating that the population merges into a single cluster as individuals age. As $\tau$ is increased the age at which individuals merge increases, causing $\lambda$ to spread out. Eventually this merging occurs extremely close to age $a=1$ and $\lambda$ has two peaks. Finally for $\tau$ above approximately $0.323$ the stationary state in which the clusters merge no longer exists and $\lambda = \mu^{(2)}$ is the unique stationary state.

\begin{figure}[ht!]
    \centering
    \includegraphics[width=.8\linewidth, trim = {1cm 0cm 1.8cm 1cm}, clip]{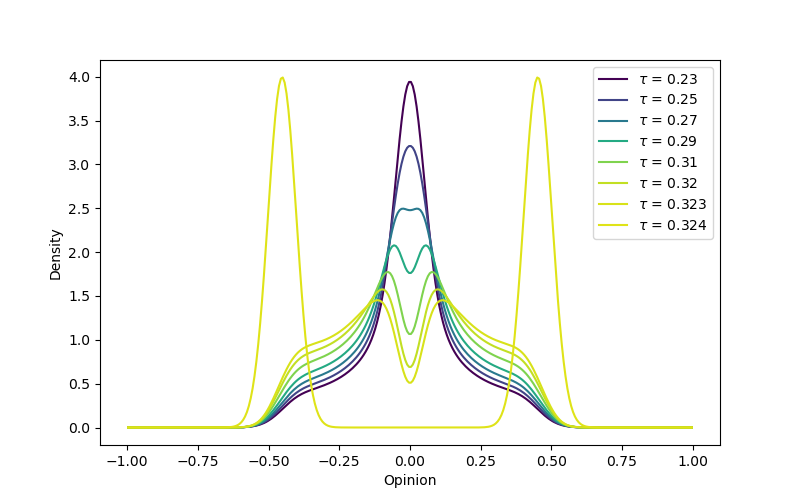}
    \caption{The fixed point $\lambda$ of the mapping $\mathcal{F}$ \eqref{Eqn: Definition of F mapping}, calculated numerically for various values of $\tau$. The interaction function $\phi$ is a smooth bounded confidence \eqref{eqn: phi for examples} with $r_1 = 0.5, r_2 = 0.6$, the diffusion coefficient $\sigma = 0.05$, the age-interaction kernel $M \equiv 1$, and the age-zero distribution $\mu = \mu^{(2)}$. As $\tau$ is increased there is a transition from a density with a single cluster to a density with two nearby peaks, then further to $\lambda = \mu^{(2)}$.}
    \label{fig: changing tau}
\end{figure}

This example raises the interesting question of what clustering behaviours, merges and emergences are possible as the opinion profile changes (in $a$) across a steady state? This forms a part of a broader question about the type of patterns it is possible to observe in the joint age-opinion distribution at a steady state, and which of these are stable. 

\subsection{Symmetry} \label{Section: Symmetry of steady states}

In this Section we show that if $\mu(x)$ is symmetric around zero then so is the steady state. 

Let $\rho(a,x)$ be a solution to \eqref{Eqn: Steady state}. Assume that for all $x\in[-1,1]$ that $\rho(0,x) = \mu(x) = \mu(-x)$. 

Define $\Tilde{\rho}(a,\Tilde{x}) = \rho(a,-x)$. Then
\begin{align*}
    \partial_a \Tilde{\rho}(a,x) &= \partial_a \rho(a,-x) \\
    &= - \partial_x \Bigg( \rho(a,-x) \bigg( \int_U \int_0^A \varM(a,b) \, \varphi(y + x)\, \rho(b,y) \, db \, dy \, \bigg) \Bigg) + \, \frac{\sigma^2}{2} \partial_{x^2} \rho(a,-x) \\
    &= - \partial_x \Bigg( \rho(a,-x) \bigg( \int_U \int_0^A \varM(a,b) \, \varphi(-y + x) \, \rho(b,-y) \, db \, dy \, \bigg) \Bigg) + \, \frac{\sigma^2}{2} \partial_{x^2} \rho(a,-x) \\
    &= - \partial_x \Bigg( -\rho(a,-x) \bigg( \int_U \int_0^A \varM(a,b) \, \varphi(y-x)\, \rho(b,-y) \, db \, dy \, \bigg) \Bigg) + \, \frac{\sigma^2}{2} \partial_{x^2} \rho(a,-x) \\
    &= - \partial_x \Bigg( -\Tilde{\rho}(a,\Tilde{x}) \bigg( \int_U \int_0^A \varM(a,b) \, \varphi(y-x) \, \Tilde{\rho}(b,y) \, db \, dy \, \bigg) \Bigg) + \, \frac{\sigma^2}{2} \partial_{x^2} \Tilde{\rho}(a,\Tilde{x}) \\
    &= - \partial_{\Tilde{x}} \Bigg( \Tilde{\rho}(a,\Tilde{x}) \bigg( \int_U \int_0^A \varM(a,b) \, \varphi(y-x) \, \Tilde{\rho}(b,y) \, db \, dy \, \bigg) \Bigg) + \, \frac{\sigma^2}{2} \partial_{x^2} \Tilde{\rho}(a,\Tilde{x}) \,.
\end{align*}
The condition on $\mu$ also ensures that the boundary conditions are the same. Hence the derivative (in $a$) of $\rho(a,x)-\rho(a,-x)$ is equal to zero everywhere in $U$ and so $ \rho(a,-x) = \Tilde{\rho}(a,\Tilde{x}) = \rho(a,x)$, meaning that \eqref{Eqn: Steady state} preserves the symmetry around $x=0$ present in $\mu(x)$. Hence if $\mu(x)$ is symmetric then the steady state is also symmetric.  

\section{Macroscopic Dynamics} \label{Section: PDE dynamics}

To shed light on the dynamics of the full system \eqref{Eqn: PDE with ageing} we show in this section key properties of the density $\rho$, then consider in Section \ref{Section: Connection to other models} several scenarios in which solutions can be expressed in terms of simpler systems. For the duration of Section \ref{Section: PDE dynamics} and Section \ref{Section: Connection to other models} we assume the solution $\rho(t,a,x)$ of \eqref{Eqn: PDE with ageing} is sufficiently smooth to allow the exchange of integrals and derivatives as required.

\subsection{Age transport and Conservation of Mass} \label{Section: Age transport}

Define the density 
\begin{align*}
    \pi(t,a) = \int_U \rho(t,a,x) \, dx
\end{align*}
which describes the total density of individuals with age $a$ (across all opinions), which formally satisfies
\begin{align*}
    \partial_t \pi(t,a) 
    &= \int_U \partial_t \rho(t,a,x) \, dx \\
    &= - \tau \partial_a \int_U \rho(t,a,x) \, dx - \int_U  \partial_x F[\rho](t,a,x)  \, dx \\
    &= - \tau \partial_a \pi(t,a) - F[\rho](t,a,-1) + F[\rho](t,a,1) \\
    &= - \tau \partial_a \pi(t,a)
\end{align*}
with the final equality holding due to the no-flux boundary conditions. Hence $\pi$ satisfies the transport equation
\begin{equation*}
    \partial_t \pi(t,a) + \tau \partial_a \pi(t,a) = 0 \,,
\end{equation*}
with periodic boundary condition $\pi(t,0) = \pi(t,1)$ arising from the age-zero boundary condition for $\rho$. Therefore
\begin{align*}
    \pi(t,a) = \pi(0, \{a - \tau t \} )
\end{align*}
where here $\{x\}$ denotes the fractional part of $x$. Thus if $\tau(0,a) = 1$ for all $a\in[0,1]$, meaning there is initially a uniform age density, then this will also be preserved. In addition, we can also see that 
\begin{align*}
    \int_U \int_0^A \rho(t,a,x) \, da \, dx 
\end{align*}
is constant in time and so the total mass is conserved. 

We note also that a similar argument to that in Section \ref{Section: Symmetry of steady states} can be applied to the full system to show that if $\rho(0,a,x)=\rho(0,a,-x)$ and $\mu(x) = \mu(-x)$ for all $a\in[0,1]$ and all $x\in[-1,1]$, then $\rho(t,a,x)=\rho(t,a,-x)$ for all $t \geq 0$. That is, the PDE \eqref{Eqn: PDE with ageing} preserves the symmetry around $x=0$.

\subsection{Evolution of the Mean Opinion}

Based on the observation of a shifting consensus in Figure \ref{fig:SDE moving consensus}, we next wish to consider how the mean opinion evolves in time. Define the mean opinion at age $a$ and the overall mean opinion respectively by
\begin{align}
    m(t,a) &= \int_U x \, \rho(t,a,x) \, dx \,, \\
    \Bar{m}(t) &= \int_0^A m(t,a) \, da = \int_0^A \int_U x \, \rho(t,a,x) \, dx \, da \,.
\end{align}
Then we have 
\begin{align*}
    \frac{d\Bar{m}}{dt} 
    &= \int_0^A \int_U x \, \partial_t\rho(t,a,x) \, dx \, da \\
    &= - \int_0^A \int_U x \, \Bigg[ \tau \partial_a\rho + \partial_x F[\rho](t,a,x) \Bigg] \, dx \, da \\
    &= - \tau \int_U x \, \bigg( \int_0^A  \partial_a\rho(t,a,x) \, da \bigg) \, dx  - \int_0^A \bigg( \int_U x \, \partial_x F[\rho](t,a,x) \, dx \bigg) \, da \\
    &= - \tau \int_U x \, \bigg( \rho(t,0,x) - \rho(t,1,x) \bigg) \, dx 
    - \int_0^A \bigg( x \, F[\rho](t,a,x) \Big|_{x=-1}^{x=1} - \int_U F[\rho](t,a,x) \, dx \bigg) \, da \\
    &= \tau \Big( m(t,0) - m(t,1) \Big)
    + \int_0^A \int_U F[\rho](t,a,x) \, dx \, da \,.
\end{align*}
Note that the final equality holds as a result of the no-flux boundary conditions. Looking at the interaction term in the flux we have
\begin{align}
    \int_0^A \int_U & \rho(t,a,x) \bigg( \int_U \int_0^A \varM(a,b) \, \varphi(y - x) \, \rho(t,b,y) \, db \, dy \bigg) \, dx \, da \nonumber\\
    &= \int_0^A \int_0^A \int_U \int_U \varphi(y - x) \, \varM(a,b) \, \rho(t,b,y) \, \rho(t,a,x) \, dx \, dy \, da \, db \label{Eqn: Zero effect on mean for model w ageing}
\end{align}
By comparing the effect of the two transformations $(x,a)\rightarrow(-x,a)$, $(y,b)\rightarrow(-y,b)$ and $(x,a)\rightarrow(-y,b)$, $(y,b)\rightarrow(-x,a)$ we see that the integral \eqref{Eqn: Zero effect on mean for model w ageing} is equal to zero when $\varM(a,b) = \varM(b,a)$ for all $a,b\in[0,A]$, meaning ages affect each other symmetrically. In this case only the noise term remains in the flux and we obtain
\begin{align} \label{Eqn: Evolution of mean}
    \frac{d\Bar{m}}{dt} 
    &= \tau \bigg( \int_U x\,\mu(x)\,dx - m(t,A) \bigg) + \frac{\sigma^2}{2} \Bigg( \int_0^A \rho(t,a,-1) \, da - \int_0^A \rho(t,a,1) \, da \Bigg) \,.
\end{align}
The first term describes evolution of the mean due to deaths/births, which brings the mean closer towards that of the age-zero distribution $\mu$. The second term describes evolution of the mean due to interactions with the no-flux boundary conditions, which arise as individuals with opinions on the boundary can only move their opinions in one direction. These contributions help to explain the movement of clusters towards a more symmetric distribution when the mean of $\mu$ is zero, as is observed in Figure \ref{fig:PDE examples}. 

\section{Connection to other models} \label{Section: Connection to other models}

We next consider several scenarios in which a solution to the full problem \eqref{Eqn: PDE with ageing} can be constructed from solutions of a different, typically simpler, system.  

\subsection{Convergence to Consensus} \label{Section: Convergence to Consensus}

We evaluate in a simple scenario how quickly the population converges to consensus. Assume $\varM\equiv1$ and $\phi\equiv1$, so that all individuals interact equally regardless of age or opinion. Assume also that the initial distribution $\rho_0$ and the age-zero distribution $\mu$ are both symmetric. In this case the PDE \eqref{Eqn: PDE with ageing} reduces to
\begin{align*}
    \partial_t \rho(t,a,x) + \tau \partial_a\rho(t,a,x) + \partial_x \Bigg( \rho(t,a,x) \bigg( \int_U \int_0^A (y - x) \, \rho(t,b,y) \, db \, dy \, \bigg) \Bigg) - \frac{\sigma^2}{2}\partial_{x^2}\rho(t,a,x) &= 0 \,.
\end{align*}
We first calculate
\begin{align*}
    \int_U \int_0^A (y - x) \, \rho(t,b,y) \, db \, dy 
    &= \int_0^A \int_U y \, \rho(t,b,y) \, dy \, db - x \,  \int_U \int_0^A \rho(t,b,y) \, db \, dy \\
    &= \Bar{m}(t) - x \,.
\end{align*}
As $\rho_0$ and $\mu$ are both symmetric the solution will remain symmetric and so $\Bar{m}(t) = 0$ for all $t\geq 0$. The PDE therefore reduces further to
\begin{align} \label{eqn: reduced PDE for convergence to consensus scenario}
    \partial_t \rho(t,a,x) + \tau \partial_a\rho(t,a,x) + \partial_x \big( -x\,\rho(t,a,x) \big) - \frac{\sigma^2}{2}\partial_{x^2}\rho(t,a,x) &= 0 \,.
\end{align}

Note that we have now effectively decoupled each age, as interactions between different ages arose only via the mean $\Bar{m}$ which we know to be zero due to symmetry. Therefore the only role played by age is to transport the density forwards in $a$. If we momentarily ignore this age transport term in \eqref{eqn: reduced PDE for convergence to consensus scenario} we obtain the Fokker-Planck equation of an Ornstein–Uhlenbeck process \citep{gardiner1985handbook} (with reflecting boundary conditions). We use this connection to construct a solution to \eqref{eqn: reduced PDE for convergence to consensus scenario}. 

For a given density $\varrho(x)$, let $\mathcal{S}_t [\varrho](x)$ denote the weak solution at time $t\geq0$ of the following initial-boundary value problem
\begin{subequations} \label{Eqn: OU problem}
\begin{align} 
    \partial_t p(t,x) + \partial_x \big( -x\,p(t,a,x) \big) - \frac{\sigma^2}{2}\partial_{x^2}p(t,a,x) &= 0 & \text{ in } U \,,\\[0.5em]
    x\,p(t,a,x) + \frac{\sigma^2}{2}\partial_{x}p(t,a,x) &= 0 & \text{ on } \partial U = \{-1,1\} \,,\\[0.5em]
    p(0,x) &= \varrho(x) & \text{ in } U \,.
\end{align}
\end{subequations}
Then the solution to \eqref{eqn: reduced PDE for convergence to consensus scenario} is given by
\begin{align} \label{Eqn: Solution of reduced PDE for convergence to consensus scenario}
    \rho(t,a,x) = 
    \begin{cases}
        \mathcal{S}_{a\tau^{-1}} [\mu](x) & \text{ for } 0 \leq a \leq \tau t \,,\\
        \mathcal{S}_t [\rho_0(a-\tau t,\cdot)](x) & \text{ for } \tau t \leq a \leq A \,.
    \end{cases}
\end{align}
For `large' values of $a$ in the second case, $\rho(t,a,x)$ is given by the solution to \eqref{Eqn: OU problem} at the same time $t$ but applied to $\rho_0(a-\tau t,\cdot)$ to account for the age transport. For `smaller' values of $a$, $a-\tau t$ would be negative, meaning $\rho(t,a,x)$ is in fact the evolution of the age-zero distribution $\mu$ under \eqref{Eqn: OU problem} until time $a\tau^{-1}$ (since it has been this length of time since this distribution was at $a=0$). Requiring $\rho_0(0,x) = \mu(x)$ ensures that there is no jump at $t = a\tau^{-1}$. Note that we can construct the solution in each section separately and concatenate them because the evolution at each age is indepedent. 

For $t\geq \tau^{-1} A$ all values of $a\in[0,A]$ fall into the first case. As $\mathcal{S}_a [\mu](x)$ is independent of $t$ the solution is therefore stationary for $t\geq \tau^{-1} A$, with the stationary distribution being entirely independent of the initial distribution $\rho_0$. 

Using this solution we can also examine the evolution of the opinion variance by studying 
\begin{align*}
    v(t) = \int_U x^2 \, \mathcal{S}_t [\varrho](x) \, dx \,.
\end{align*}
Differentiating in time, then integrating by parts twice, we have
\begin{align*}
    \frac{dv}{dt} 
    &= \int_U x^2 \, \partial_x \Big( x \, \mathcal{S}_t [\varrho](x) + \frac{\sigma^2}{2} \partial_{x}\mathcal{S}_t [\varrho](x) \Big)\, dx \,,\\
    &= -2 \int_U x \, \Big( x \, \mathcal{S}_t [\varrho](x) + \frac{\sigma^2}{2} \partial_{x}\mathcal{S}_t [\varrho](x) \Big)\, dx \,,\\
    &= -2 v(t) - \sigma^2 \bigg( \mathcal{S}_t [\varrho](1) - \mathcal{S}_t [\varrho](-1)  -  \int_U \mathcal{S}_t [\varrho](x) \, dx \bigg) \,.
\end{align*}
As \eqref{Eqn: OU problem} also preserves symmetry, if $\varrho$ is symmetric then $\mathcal{S}_t [\varrho](1) = \mathcal{S}_t [\varrho](-1)$. Moreover the mass is preserved so if $\varrho$ has mass 1 then so does $\mathcal{S}_t [\varrho]$. Note that both these properties are true of the choices for $\varrho$ in \eqref{Eqn: Solution of reduced PDE for convergence to consensus scenario}. In such a case we conclude that $v(t)$ satisfies
\begin{align*}
    \frac{dv}{dt} 
    &= \sigma^2 -2v \quad\Rightarrow\quad
    v(t) = \frac{\sigma^2}{2} + \bigg( v(0) - \frac{\sigma^2}{2} \bigg) e^{-2t} \,.
\end{align*}
Hence we conclude that the opinion density $\rho(t,a,x)$ has mean zero and variance
\begin{align} \label{eqn: convergence to consensus variance}
    \int_U x^2 \, \rho(t,a,x) \, dx = 
    \begin{cases}
        \dfrac{\sigma^2}{2} + \Bigg( \displaystyle\int_U x^2 \, \mu(x) \, dx - \dfrac{\sigma^2}{2} \Bigg) e^{-2a\tau^{-1}} & \text{ for } 0 \leq a \leq \tau t \,,\\[1.5em]
        \dfrac{\sigma^2}{2} + \Bigg( \displaystyle\int_U x^2 \, \rho_0(a - \tau t,x) \, dx - \dfrac{\sigma^2}{2} \Bigg) e^{-2t} & \text{ for } \tau t \leq a \leq A \,.
    \end{cases}
\end{align} 

\begin{figure}[ht!]
    \centering
    \includegraphics[width=0.9\linewidth]{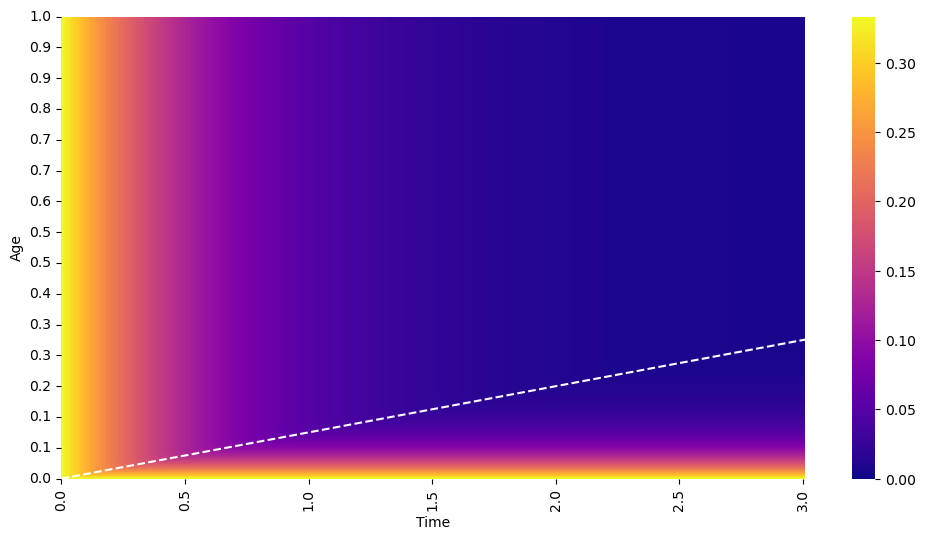}
    \caption{Opinion variance at each age over time, calculated from the numerical solution of \eqref{Eqn: PDE with ageing} with $\varM \equiv 1$, $\phi \equiv 1$, $\tau = \sigma = 0.1$ and both $\rho_0$ and $\mu$ uniformly distributed. A white dashed line shows when $a = \tau t$, which marks the transition between the two parts of the solution \eqref{Eqn: Solution of reduced PDE for convergence to consensus scenario}. }
    \label{fig: convergence to consensus variance}
\end{figure}

Figure \ref{fig: convergence to consensus variance} shows the opinion variance at each age over time, calculated from the numerical solution of \ref{Eqn: PDE with ageing} with $\varM \equiv 1$, $\phi \equiv 1$, $\tau = \sigma = 0.1$ and both $\rho_0$ and $\mu$ uniformly distributed. This matches exactly with the theoretical variance \eqref{eqn: convergence to consensus variance}. Initially there is a high variance at all ages due to the uniform initial condition but as the population moves towards consensus the only source of variance is from $\mu$, which decays as individuals age. 

\subsection{No inter-age interactions} \label{Section: No inter-age interactions}

We now consider the other extreme and assume that the age-interaction kernel $M(a,b) = \delta_a(b)$, that is individuals only interact with others of exactly the same age. Note that this violates Assumption \ref{Assumption group: Standard} and is not a reasonable choice for the microscopic model as all individuals almost surely have distinct ages, therefore there would simply be no opinion evolution. However, in the macroscopic setting, we will show that a solution for \eqref{Eqn: PDE with ageing} can still be constructed. 

For $M(a,b) = \delta_a(b)$ the flux becomes,
\begin{align*}
    F[\rho](t,a,x) 
    &= \rho(t,a,x) \bigg( \int_U \, \varphi(y - x) \, \rho(t,a,y) \, dy\bigg) - \, \frac{\sigma^2}{2} \partial_{x}\rho(t,a,x) \,,
\end{align*}
as each age $a$ only interacts with the density $\rho(t,a,x)$ at that same age. We therefore expect that the solution will match that of the standard mean-field model, transported forwards in age. We proceed in much the same way as in the previous Section \ref{Section: Convergence to Consensus}.

For a given density $\varrho(x)$, let $\mathcal{S}^{(2)}_t [\varrho](x)$ denote the weak solution at time $t\geq0$ of the classical mean-field limit, given by the following initial-boundary value problem
\begin{subequations}
\begin{align} 
    \partial_t p(t,x) + \partial_x G[p](t,x) &= 0 \\[0.5em]
    G[p](t,-1) = G[p](t,1) &= 0 \\[0.5em]
    p(0,x) &= \varrho(x)
\end{align}
\end{subequations}
with flux term
\begin{align}
    G[p](t,x) = p(t,x) \bigg( \int_U \, \varphi(y - x) \, p(t,y) \, dy\bigg) - \, \frac{\sigma^2}{2} \partial_{x}p(t,x) \,.
\end{align}
Then, similarly to the construction in Section \ref{Section: Convergence to Consensus}, the solution to \eqref{Eqn: PDE with ageing} for $M(a,b) = \delta_a(b)$ is given by
\begin{align}
    \rho(t,a,x) = 
    \begin{cases}
        \mathcal{S}^{(2)}_{a\tau^{-1}} [\mu](x) & \text{ for } 0 \leq a \leq \tau t \,,\\[0.5em]
        \mathcal{S}^{(2)}_t [\rho_0(a-\tau t,\cdot)](x) & \text{ for } \tau t \leq a \leq A \,.
    \end{cases}
\end{align}
Again the distribution is stationary for $t\geq \tau^{-1} A$ and independent of the initial distribution $\rho_0$. 

In both this example, where individuals interact only with those of exactly the same age, and the previous example of Section \ref{Section: Convergence to Consensus}, where all individuals interact at all times, we exploit the decoupling of ages to construct the solution of \eqref{Eqn: PDE with ageing} from the solution to simpler problems. 

\subsection{No ageing} \label{Section: No ageing}

In our next scenario we consider how the full problem \eqref{Eqn: PDE with ageing} can again be reduced in the case that $\tau = 0$. While this may appear to remove the ageing effect entirely it still allows for differing opinion distributions at different ages and certain non-trivial age interactions (specifically we may still reduce the problem in the case that age-interactions have the form $M(a,b) = M(b)$). 

Setting $\tau = 0$ in \eqref{Eqn: PDE with ageing} we instead have
\begin{subequations} \label{Eqn: tau=0 system}
\begin{align} 
    \partial_t \rho(t,a,x) +  \partial_x F[\rho](t,a,x) &= 0
\end{align}
\end{subequations}
with the same boundary conditions as in \eqref{Eqn: PDE with ageing}. 
Assume that $\varM(a,b) = \varM(b)$, in which case the flux \eqref{Eqn: Flux} reduces to
\begin{align*}
    F[\rho](t,a,x) = \rho(t,a,x) \Bigg( \int_U \varphi(y - x) \, \bigg( \int_0^A \varM(b) \, \rho(t,b,y) \, db \bigg) \, dy \Bigg) - \, \frac{\sigma^2}{2} \partial_{x}\rho(t,a,x)
\end{align*}
We first solve for
\begin{align*}
    u(t,x) = \int_0^A \varM(b) \, \rho(t,b,x) \, db \,,
\end{align*}
and use this to decouple the ages in \eqref{Eqn: tau=0 system}. Formally we have
\begin{align*}
    \partial_t u(t,x) 
    &= \int_0^A \varM(b) \,\partial_t \rho(t,b,x) \, db \\
    &= -\int_0^A \varM(b) \, \partial_x \Bigg( \rho(t,b,x) \bigg( \int_U \varphi(y - x) \, u(t,y) \, dy \bigg) \Bigg) - \, \varM(b) \frac{\sigma^2}{2} \partial_{x^2}\rho(t,b,x) \, db \,,\\
    &= - \partial_x \Bigg( \int_0^A \varM(b) \, \rho(t,b,x) \bigg( \int_U \varphi(y - x) \, u(t,y) \, dy \bigg) \, db  \Bigg) + \frac{\sigma^2}{2} \partial_{x^2} \bigg( \int_0^A \varM(b) \rho(t,b,x) \, db \bigg) \,.
\end{align*}
It is at this point crucial that $u(t,y)$ is independent of age, as we may now factorise the following integral,
\begin{align*}
    \int_0^A \varM(b) \, \rho(t,b,x) \bigg( \int_U \varphi(y - x) \, u(t,y) \, dy \bigg) \, db = \bigg( \int_0^A \varM(b) \, \rho(t,b,x) \, db \bigg) \bigg( \int_U \varphi(y - x) \, u(t,y) \, dy \bigg) \,.
\end{align*}
If $\varM(a,b)$ depended on both $a$ and $b$, rather than only on $b$, we would require $u$ to be a function of $(t,a,x)$ and the above factorisation would not be possible. Using this factorisation we conclude, 
\begin{align}
    \partial_t u(t,x) + \partial_x \Bigg( u(t,x) \bigg( \int_U \varphi(y - x) \, u(t,y) \, dy \bigg)  \Bigg) - \frac{\sigma^2}{2} \partial_{x^2} u(t,x) = 0 \,,
\end{align}
with appropriate no-flux boundary conditions. This is again the standard mean-field limit of the model without age structure. Using the notation of Section \ref{Section: No inter-age interactions} we have 
\begin{align*}
    u(t,x) = \mathcal{S}_t^{(2)}[u(0,\cdot)](x) = \mathcal{S}_t^{(2)}\bigg[\int_0^A M(b) \, \rho_0(a,\cdot)\bigg](x) \,.
\end{align*}
Since $u(t,x)$ is now known we may replace the flux term in \eqref{Eqn: tau=0 system} with
\begin{align*}
    F_u[\rho](t,a,x) = \rho(t,a,x) \bigg( \int_U \varphi(y - x) \, u(t,y) \, dy \bigg) - \, \frac{\sigma^2}{2} \partial_{x}\rho(t,a,x) \,.
\end{align*}
Using this flux term, the evolution of $\rho(t,a,x)$ is now independent of the value of $\rho$ at any other age. Hence we can instead specify a family of problems: for each $a \in (0,1]$ find a solution $\rho^{(a)}(t,x)$ of 
\begin{subequations} \label{Eqn: Family of problems for tau = 0}
\begin{align} 
    \partial_t \rho^{(a)}(t,x) +  \partial_x F_u\big[\rho^{(a)}\big](t,x)&= 0 \\[0.5em]
    F_u\big[\rho^{(a)}\big](t,-1) = F_u\big[\rho^{(a)}\big](t,1) &= 0 \\
    \rho^{(a)}(0,x) &= \rho_0(a,x)
\end{align}
\end{subequations}
while for $a = 0$ we have the constant
\begin{align*}
    \rho^{(0)}(t,x) &= \mu(x) \, \int_U \rho(0,1,y) \, dy \,.
\end{align*}
Similarly to the fixed point argument used to show the existence of steady states in Section \ref{Section: steady states} we have again reduced the problem to a (family of) linear second-order parabolic PDE(s). Since the problems \eqref{Eqn: Family of problems for tau = 0} differ only in their initial conditions $\rho_0(a,x)$ they will be continuous in $a$ if $\rho_0$ is, except at $a=0$ where the constant solution may create a discontinuity.  

Note that a similar analysis is not possible when $\tau > 0$ since the evolution of $u$ would then be influenced by the boundary at $a=A$. Specifically it would require knowledge of the opinion density at age $A$ as this would be replaced continuously by $\mu$ as individuals die, in the same way that the mean opinion at age $A$ is continuously replaced by the mean of $\mu$ in \eqref{Eqn: Evolution of mean}.

\subsection{McKendrick Ageing} \label{Section: MK ageing}

To conclude our analysis of the dynamics of \eqref{Eqn: PDE with ageing} we now compare against the ageing structure utilised in the McKendrick (MK) equation, a commonly used model for age-structured population dynamics in the life-sciences \citep{m1925applications,inaba2017age,keyfitz1997mckendrick}. The MK equation allows for more complex population dynamics than the simple age-transport in \eqref{Eqn: PDE with ageing}, therefore we are interested in cases where \eqref{Eqn: PDE with ageing} can still incorporate some of these effects. 

For a population of animals/individuals, the MK equation describes the density $\pi(t,a)$ of an age $a\in[0,\infty)$ at time $t\geq0$. The initial-boundary value problem reads
\begin{subequations} \label{MK-VF equation}
\begin{align}
    \partial_t \pi(t,a) + \tau \partial_a \pi(t,a) &= -\tau \, \vard(a) \, \pi(t,a) \,,\\
    \pi(t,0) &= \int_0^\infty \vard(a) \, \pi(t,a) \, da \,,\label{MK-VF equation: age zero BC}\\
    \pi(0,a) &= \pi_0(a) \,,
\end{align}
\end{subequations}
where $\vard:[0,\infty)\rightarrow\mathds{R}$ is the age-dependent death rate and $\pi_0\in L^\infty (\mathds{R}^+)$ is the initial age profile. Note that in the standard MK model the constant $\tau$ could be removed using a time rescaling, it is retained here to simplify the comparison with the opinion formation model \eqref{Eqn: PDE with ageing}. The boundary condition \eqref{MK-VF equation: age zero BC} is chosen to ensure a constant total population size. In addition we assume that 
\begin{align*}
    \int_0^\infty \pi_0(a) \, da = 1 \,,
\end{align*}
so that 
\begin{align*}
    \int_0^\infty \pi(t,a) \, da = 1 \,,
\end{align*}
for all $t\geq 0$. 

In the same way that \eqref{Eqn: PDE with ageing} can be viewed as an extension of the age transport equation to include opinion dynamics, this setup can now be similarly extended to give
\begin{subequations} \label{Eqn: PDE with MK ageing v1}
\begin{align} 
    \partial_t \rho(t,a,x) + \tau \partial_a \rho(t,a,x) + \, \partial_x \Tilde{F}[\rho](t,a,x) &= -\tau \, \vard(a) \, \rho(t,a,x)  \,,\\[0.5em]
    \Tilde{F}[\rho](t,a,-1) = \Tilde{F}[\rho](t,a,1) &= 0 \,,\\
    \rho(t,0,x) &= \mu(x) \int_U \int_0^\infty \vard(a) \, \rho(t,a,x) \, da \, dx \label{Eqn: PDE with MK ageing age-zero BC v1} \,,\\
    \rho(0,a,x) &= \rho_0(a,x) \,,
\end{align}
\end{subequations}
where the definition of $\Tilde{F}[\rho]$ is adapted slightly from \eqref{Eqn: Flux} as 
\begin{align} \label{Eqn: fluxes for MK ageing}
        \Tilde{F}[\rho](t,a,x) = \rho(t,a,x) \bigg( \int_U \int_0^\infty \varM(a,b) \, \varphi(y - x) \, \rho(t,b,y) \, db \, dy - \, \frac{\sigma^2}{2} \partial_{x}\log\big(\rho(t,a,x)\big) \bigg) \,.
\end{align}
In addition, for a solution $\rho$ of \eqref{Eqn: PDE with MK ageing v1}, the age density
\begin{align*}
    \pi(t,a) = \int_U \rho(t,a,x) \, dx
\end{align*}
satisfies the MK equation \eqref{MK-VF equation} with initial condition
\begin{align*}
    \pi_0(a) = \int_U \rho_0(a,x) \, dx \,,
\end{align*}
and $\pi$ can thus be solved independently of $\rho$ (e.g. using the method of characteristics). Using this solution we can replace update the boundary condition \eqref{Eqn: PDE with MK ageing age-zero BC v1} to obtain
\begin{subequations} \label{Eqn: PDE with MK ageing}
\begin{align} 
    \partial_t \rho(t,a,x) + \tau \partial_a \rho(t,a,x) +  \, \partial_x \Tilde{F}[\rho](t,a,x) &= -\tau \, \vard(a) \, \rho(t,a,x)  \,,\\[0.5em]
    \Tilde{F}[\rho](t,a,-1) = \Tilde{F}[\rho](t,a,1) &= 0 \,,\\
    \rho(t,0,x) &= \mu(x) \bigg( \int_0^\infty \vard(a) \, \pi(t,a) \, da \bigg) \label{Eqn: PDE with MK ageing age-zero BC} \,,\\
    \rho(0,a,x) &= \rho_0(a,x) \,.
\end{align}
\end{subequations}

This approach allows for greater flexibility in specifying the population's age profile, with the drawback that ages can become arbitrarily large. Indeed steady states of the MK equation \eqref{MK-VF equation} have the form 
\begin{align*}
    \pi(a) = \pi(0) \, \exp\bigg( -\int_0^a \vard(b) \, db \bigg) \,.
\end{align*}
So $\pi(a)$ will be compactly supported (on $[0,A]$) if and only if there exists some $A > 0$ such that 
\begin{align*}
    \lim_{a \rightarrow A^-} d(a) = \infty \,,
\end{align*}
that is the death rate diverges. The following Proposition \ref{Prop: Correspondence with MK}, proven in Appendix \ref{Appendix: Proofs}, shows that in such cases a solution to \eqref{Eqn: PDE with MK ageing} can be constructed from a solution to \eqref{Eqn: PDE with ageing}. That is, the original model \eqref{Eqn: PDE with ageing} with a uniform age profile can also capture the behaviour of the model \eqref{Eqn: PDE with MK ageing} which uses the more realistic MK ageing. 

\begin{proposition} \label{Prop: Correspondence with MK}
    Assume that $\mu, \vard, \rho_0$ and $\varM$ are given and let $\pi(a)$ be the corresponding stationary age profile of \eqref{MK-VF equation}, which is assumed to have compact support in $[0,1]$. Let $q$ be a solution of \eqref{Eqn: PDE with ageing} with age interactions given by $\varM'(a,b) = \varM(a,b) \, \pi(b)$. Then $\rho$ defined by 
    \begin{align} \label{Eqn: construction of MK solution}
        \rho(t,a,x) = q(t,a,x) \, \pi(a) 
    \end{align}
    is a solution of \eqref{Eqn: PDE with MK ageing}. 
\end{proposition}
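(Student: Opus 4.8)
The plan is to verify directly, by substitution, that $\rho(t,a,x) = q(t,a,x)\,\pi(a)$ satisfies each equation of \eqref{Eqn: PDE with MK ageing}. The argument rests on two structural facts: that $\pi$ is independent of $x$ and supported in $[0,1]$, and that the stationary age profile satisfies $\tau\partial_a\pi = -\tau\,\vard(a)\,\pi$ together with the age-zero identity $\pi(0) = \int_0^\infty \vard(a)\,\pi(a)\,da$ inherited from \eqref{MK-VF equation}.

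First I would establish the key flux identity $\Tilde{F}[\rho] = \pi\, F[q]$, where $F$ denotes the original flux \eqref{Eqn: Flux} computed with the modified kernel $\varM'$. Because $\pi$ does not depend on $x$, the diffusive part transforms as $\partial_x\rho = \pi\,\partial_x q$. In the interaction term the substitution $\rho(t,b,y)=q(t,b,y)\,\pi(b)$ together with $\mathrm{supp}\,\pi\subseteq[0,1]$ collapses $\int_0^\infty \varM(a,b)\,\pi(b)(\cdots)\,db$ to $\int_0^A \varM'(a,b)(\cdots)\,db$, which is precisely the kernel driving $q$. Factoring out $\pi(a)$ then gives $\Tilde{F}[\rho] = \pi\,F[q]$, and hence $\partial_x\Tilde{F}[\rho] = \pi\,\partial_x F[q]$. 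This also immediately yields the no-flux boundary conditions at $x=\pm1$, since $F[q](t,a,\pm1)=0$ for $q$.

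Next I would handle the transport part. Differentiating gives $\partial_t\rho = \pi\,\partial_t q$ and, using the stationary relation $\pi' = -\vard\,\pi$, $\partial_a\rho = \pi\,\partial_a q - \vard\,\pi\, q$. Combining these with the flux identity produces
\[
\partial_t\rho + \tau\partial_a\rho + \partial_x\Tilde{F}[\rho] = \pi\big(\partial_t q + \tau\partial_a q + \partial_x F[q]\big) - \tau\,\vard(a)\,\pi\, q = -\tau\,\vard(a)\,\rho,
\]
where the bracketed term vanishes because $q$ solves \eqref{Eqn: PDE with ageing} with kernel $\varM'$. This is exactly the MK-ageing PDE with its source term.

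The step requiring most care, and the main obstacle, is the age-zero boundary condition. Evaluating $\rho(t,0,x) = q(t,0,x)\,\pi(0)$ and inserting the age-zero condition $q(t,0,x)=\mu(x)\int_U q(t,1,y)\,dy$ gives $\rho(t,0,x)=\mu(x)\,\pi(0)\int_U q(t,1,y)\,dy$. To match the required \eqref{Eqn: PDE with MK ageing age-zero BC} I would invoke the stationary identity $\int_0^\infty \vard(a)\,\pi(a)\,da = \pi(0)$, reducing the task to showing $\int_U q(t,1,y)\,dy = 1$. This follows from the age-transport and mass analysis of Section \ref{Section: Age transport}: under the uniform age profile for $q$ guaranteed by Assumption \ref{Assumption: Initial age profile uniform.}, one has $\int_U q(t,a,y)\,dy\equiv1$, so the resetting volume at $a=1$ is exactly one. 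I would close by checking the compatibility of initial data, noting that $\rho$ inherits the age density $\int_U\rho(t,a,x)\,dx = \pi(a)$, which is stationary for \eqref{MK-VF equation} and hence consistent with the $\pi(t,a)$ appearing in \eqref{Eqn: PDE with MK ageing age-zero BC}; the prescribed initial datum is then recovered from the corresponding choice of initial condition for $q$.
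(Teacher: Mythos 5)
Your proposal is correct and follows essentially the same route as the paper's own proof: direct substitution, the flux identity $\Tilde{F}[\rho]=\pi\,F[q]$ obtained by folding $\pi(b)$ into the kernel $\varM'$ and using the compact support of $\pi$, the stationary relation $\pi'=-\vard\,\pi$ for the transport term, and the identities $\pi(0)=\int_0^\infty \vard(a)\,\pi(a)\,da$ together with $\int_U q(t,1,y)\,dy=1$ (from the uniform age profile of $q$) for the age-zero boundary condition. No gaps; your treatment of the age-zero condition is, if anything, slightly more explicit than the paper's.
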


Therefore, by incorporating the stationary age profile $\pi$ into the age interaction term $\varM'$ the original model \eqref{Eqn: PDE with ageing} can capture some effects from the model using McKendrick ageing \eqref{Eqn: PDE with MK ageing}, in particular a non-uniform age profile. It should be noted that this only applies in the case that $\pi$ is compactly supported, although in the context of opinion dynamics this is a realistic assumption. 

Note that if $\varM(a,b)$ is independent of $a$ then so is $\varM'(a,b)$, hence Assumption \ref{Assumption group: Fixed point} still holds and the analysis of steady states in Section \ref{Section: steady states} is still applicable. 

This raises the question of how a non-stationary age density $\pi(t,a)$ might affect the opinion formation process. For example, we observe in many numerical solutions to \eqref{Eqn: PDE with ageing} that the population becomes increasingly clustered as higher ages. Therefore a population with an increasing proportion of older individuals might become more influenced by these clusters, while a population with an increasing proportion of younger individuals might be increasingly influenced by the age-zero distribution $\mu$. 

\section{Conclusion}  \label{Section: Conclusion} 

Throughout this paper we have observed in both the microscopic SDE model as well as the macroscopic PDE model for opinion dynamics that introducing an explicit continuous age structure allows for a richer variety of opinion dynamics. The non-linearilty introduced by the birth and death process opens the possibility of a shifting consensus, the emergence of new clusters and periodic behaviour. Such new possibilities naturally generate several open questions and challenges. 

To begin, while the similarity in structure with the Cucker-Smale model and derivation performed in Section \ref{Section: Formal Derivation} indicate the convergence of the SDE model \eqref{Eqn: SDE model} to the PDE model \eqref{Eqn: PDE with ageing}, it remains to show this rigorously for this specific model. This would also establish the existence (and uniqueness) of the solution to \eqref{Eqn: PDE with ageing} in an appropriate space, which would allow the formal computations made in Section \ref{Section: PDE dynamics} and Section \ref{Section: Connection to other models} to be made rigorous. 

In demonstrating the existence of steady states and discussing their uniqueness we have begun to classify the model's behaviours, however there is still significant work to be done in this regard. For instance, to determine precise conditions for the uniqueness of steady states of \eqref{Eqn: PDE with ageing}. Moreover it would be useful to define an equivalence between steady states that describes the various patterns in Figure \ref{fig:PDE examples}. Finally it remains to prove that the repeated patterns observed in Figure \ref{fig:PDE wiggles} and Figure \ref{fig:PDE periodic consensus} are indeed periodic solutions, with the broader goal of determining when and what periodic solutions are possible. 

In considering the applicability of this model and its ability to predict real population-level age-structured opinion formation, there are various sources of data that may be useful. For example, the POLYMOD data set has been widely used as a measure of contact frequency between individuals of different ages \citep{mossong2008social,Mossong2017data}. As described in Section \ref{Section: MK ageing}, this age-interaction kernel could also be used to incorporate a fitted age distribution. In addition, even in the case that age-structured data is not available, model outputs could be aggregated over ages as in Figure \ref{fig:PDE examples} to provide a population-level opinion distribution that could be compared against survey data.  

Lastly, one could consider the introduction of noise on the PDE level, giving a stochastic partial differential equation (SPDE). Such an approach was considered in \cite{wehlitz2024approximating} to mimic the movement of whole clusters, as observed in Figure \ref{fig:merging and emerging clusters}, that is normally lost on the PDE level (as the stochasticity in the SDE becomes deterministic diffusion in the PDE). By using an SPDE one could recreate the movement of clusters and study their merging and re-emergence. 

While there are several open directions for future research, it is already clear that the introduction of a continuous age structure leads to novel opinion dynamics that are interesting from both an applied and mathematical perspective. We hope that this framework will contribute to the development of more realistic opinion formation models that capture, and begin to explain, the persistent heterogeneity of opinions seen in the real world. 

\section{Acknowledgements}

AN was supported by the Engineering and Physical Sciences Research Council through the Mathematics of Systems II Centre for Doctoral Training at the University of Warwick (reference EP/S022244/1). MTW acknowledges partial support from the EPSRC Small Grant EPSRC EP/X010503/1.

For the purpose of open access, the authors have applied a Creative Commons Attribution (CC-BY) license to Any Author Accepted Manuscript version arising from this submission.

The authors declare no competing interests. 

\bibliography{bibliography}

@book{evans2022partial,
  title={Partial differential equations},
  author={Evans, Lawrence C},
  volume={19},
  year={2022},
  publisher={American Mathematical Society}
}

@article{keyfitz1997mckendrick,
  title={The McKendrick partial differential equation and its uses in epidemiology and population study},
  author={Keyfitz, Barbara Lee and Keyfitz, Nathan},
  journal={Mathematical and Computer Modelling},
  volume={26},
  number={6},
  pages={1--9},
  year={1997},
  publisher={Elsevier}
}

@book{ladyzhenskaia1968linear,
  title={Linear and quasi-linear equations of parabolic type},
  author={Ladyzhenskaia, Olga Aleksandrovna and Solonnikov, Vsevolod Alekseevich and Ural'tseva, Nina N},
  volume={23},
  year={1968},
  publisher={American Mathematical Soc.}
}

@book{brezis2011functional,
  title={Functional analysis, Sobolev spaces and partial differential equations},
  author={Brezis, Haim and Br{\'e}zis, Haim},
  year={2011},
  publisher={Springer}
}

@book{friedman2008partial,
  title={Partial differential equations of parabolic type},
  author={Friedman, Avner},
  year={2008},
  publisher={Courier Dover Publications}
}

@book{diestel1977vector,
  title={Vector Measures},
  author={Diestel, J. and Uhl, J.J.},
  isbn={9780821815151},
  lccn={lc77009625},
  series={Mathematical surveys and monographs},
  url={https://books.google.co.uk/books?id=fEKZAwAAQBAJ},
  year={1977},
  publisher={American Mathematical Society}
}

@book{clarke2013functional,
  title={Functional analysis, calculus of variations and optimal control},
  author={Clarke, Francis},
  volume={264},
  year={2013},
  publisher={Springer}
}

@article{erban2007reactive,
  title={Reactive boundary conditions for stochastic simulations of reaction--diffusion processes},
  author={Erban, Radek and Chapman, S Jonathan},
  journal={Physical Biology},
  volume={4},
  number={1},
  pages={16},
  year={2007},
  publisher={IOP Publishing}
}

@article{gardiner1985handbook,
  title={Handbook of stochastic methods for physics, chemistry and the natural sciences},
  author={Gardiner, Crispin W},
  journal={Springer series in synergetics},
  year={1985},
  publisher={Springer Berlin Heidelberg}
}

@book{holden2010splitting,
  title={Splitting methods for partial differential equations with rough solutions: Analysis and MATLAB programs},
  author={Holden, Helge},
  volume={11},
  year={2010},
  publisher={European Mathematical Society}
}

@article{goddard2022noisy,
  title={Noisy bounded confidence models for opinion dynamics: the effect of boundary conditions on phase transitions},
  author={Goddard, Benjamin D and Gooding, Beth and Short, H and Pavliotis, GA},
  journal={IMA Journal of Applied Mathematics},
  volume={87},
  number={1},
  pages={80--110},
  year={2022},
  publisher={Oxford University Press}
}

@article{jabin2017mean,
  title={Mean field limit for stochastic particle systems},
  author={Jabin, Pierre-Emmanuel and Wang, Zhenfu},
  journal={Active Particles, Volume 1: Advances in Theory, Models, and Applications},
  pages={379--402},
  year={2017},
  publisher={Springer}
}

@article{cucker2007emergent,
  title={Emergent behavior in flocks},
  author={Cucker, Felipe and Smale, Steve},
  journal={IEEE Transactions on automatic control},
  volume={52},
  number={5},
  pages={852--862},
  year={2007},
  publisher={IEEE}
}

@article{sznitman1991topics,
  title={Topics in propagation of chaos},
  author={Sznitman, Alain-Sol},
  journal={Ecole d’{\'e}t{\'e} de probabilit{\'e}s de Saint-Flour XIX—1989},
  volume={1464},
  pages={165--251},
  year={1991},
  publisher={Springer}
}

@article{graham1996asymptotic,
  title={Asymptotic behaviour of some interacting particle systems; McKean-Vlasov and Boltzmann models},
  author={Graham, Carl and Kurtz, Thomas G and M{\'e}l{\'e}ard, Sylvie and Protter, Philip E and Pulvirenti, Mario and Talay, Denis and M{\'e}l{\'e}ard, Sylvie},
  journal={Probabilistic Models for Nonlinear Partial Differential Equations: Lectures given at the 1st Session of the Centro Internazionale Matematico Estivo (CIME) held in Montecatini Terme, Italy, May 22--30, 1995},
  pages={42--95},
  year={1996},
  publisher={Springer}
}

@article{hegselmann2002opinion,
  title={Opinion dynamics and bounded confidence models, analysis, and simulation},
  author={Hegselmann, Rainer and Krause, Ulrich and others},
  journal={Journal of artificial societies and social simulation},
  volume={5},
  number={3},
  year={2002}
}

@article{pineda2009noisy,
  title={Noisy continuous-opinion dynamics},
  author={Pineda, Miguel and Toral, Raul and Hernandez-Garcia, Emilio},
  journal={Journal of Statistical Mechanics: Theory and Experiment},
  volume={2009},
  number={08},
  pages={P08001},
  year={2009},
  publisher={IOP Publishing}
}

@article{grauwin2012opinion,
  title={Opinion group formation and dynamics: Structures that last from nonlasting entities},
  author={Grauwin, S{\'e}bastian and Jensen, Pablo},
  journal={Physical Review E—Statistical, Nonlinear, and Soft Matter Physics},
  volume={85},
  number={6},
  pages={066113},
  year={2012},
  publisher={APS}
}

@article{carletti2008birth,
  title={Birth and death in a continuous opinion dynamics model: The consensus case},
  author={Carletti, Timoteo and Fanelli, Duccio and Guarino, Alessio and Bagnoli, Franco and Guazzini, Andrea},
  journal={The European Physical Journal B},
  volume={64},
  pages={285--292},
  year={2008},
  publisher={Springer}
}

@article{garnier2017consensus,
  title={Consensus convergence with stochastic effects},
  author={Garnier, Josselin and Papanicolaou, George and Yang, Tzu-Wei},
  journal={Vietnam Journal of Mathematics},
  volume={45},
  pages={51--75},
  year={2017},
  publisher={Springer}
}

@article{condie2021stochastic,
  title={Stochastic events can explain sustained clustering and polarisation of opinions in social networks},
  author={Condie, Scott A and Condie, Corrine M},
  journal={Scientific reports},
  volume={11},
  number={1},
  pages={1355},
  year={2021},
  publisher={Nature Publishing Group UK London}
}

@article{pineda2011diffusing,
  title={Diffusing opinions in bounded confidence processes},
  author={Pineda, Miguel and Toral, Ra{\'u}l and Hern{\'a}ndez-Garc{\'\i}a, Emilio},
  journal={The European Physical Journal D},
  volume={62},
  pages={109--117},
  year={2011},
  publisher={Springer}
}

@article{llabres2024aging,
  title={Aging in some opinion formation models: A comparative study},
  author={Llabr{\'e}s, Jaume and Oliver-Bonafoux, Sara and Anteneodo, Celia and Toral, Ra{\'u}l},
  journal={Physics},
  volume={6},
  number={2},
  pages={515--528},
  year={2024},
  publisher={MDPI}
}

@article{stark2008decelerating,
  title={Decelerating microdynamics can accelerate macrodynamics in the voter model},
  author={Stark, Hans-Ulrich and Tessone, Claudio J and Schweitzer, Frank},
  journal={Physical review letters},
  volume={101},
  number={1},
  pages={018701},
  year={2008},
  publisher={APS}
}

@article{axelrod1997dissemination,
  title={The dissemination of culture: A model with local convergence and global polarization},
  author={Axelrod, Robert},
  journal={Journal of conflict resolution},
  volume={41},
  number={2},
  pages={203--226},
  year={1997},
  publisher={Sage Periodicals Press 2455 Teller Road, Thousand Oaks, CA 91320}
}

@article{deffuant2000mixing,
  title={Mixing beliefs among interacting agents},
  author={Deffuant, Guillaume and Neau, David and Amblard, Frederic and Weisbuch, G{\'e}rard},
  journal={Advances in Complex Systems},
  volume={3},
  number={01n04},
  pages={87--98},
  year={2000},
  publisher={World Scientific}
}

@article{lorenz2007continuous,
  title={Continuous opinion dynamics under bounded confidence: A survey},
  author={Lorenz, Jan},
  journal={International Journal of Modern Physics C},
  volume={18},
  number={12},
  pages={1819--1838},
  year={2007},
  publisher={World Scientific}
}

@article{hegselmann2015opinion,
  title={Opinion dynamics under the influence of radical groups, charismatic leaders, and other constant signals: A simple unifying model},
  author={Hegselmann, Rainer and Krause, Ulrich},
  journal={Networks \& Heterogeneous Media},
  volume={10},
  number={3},
  pages={477},
  year={2015},
  publisher={American Institute of Mathematical Sciences}
}

@article{motsch2014heterophilious,
  title={Heterophilious dynamics enhances consensus},
  author={Motsch, Sebastien and Tadmor, Eitan},
  journal={SIAM review},
  volume={56},
  number={4},
  pages={577--621},
  year={2014},
  publisher={SIAM}
}

@article{proskurnikov2018tutorial,
  title={A tutorial on modeling and analysis of dynamic social networks. Part II},
  author={Proskurnikov, Anton V and Tempo, Roberto},
  journal={Annual Reviews in Control},
  volume={45},
  pages={166--190},
  year={2018},
  publisher={Elsevier}
}

@inproceedings{blondel20072r,
  title={On the 2R conjecture for multi-agent systems},
  author={Blondel, Vincent D and Hendrickx, Julien M and Tsitsiklis, John N},
  booktitle={2007 European Control Conference (ECC)},
  pages={874--881},
  year={2007},
  organization={IEEE}
}

@article{blondel2010continuous,
  title={Continuous-time average-preserving opinion dynamics with opinion-dependent communications},
  author={Blondel, Vincent D and Hendrickx, Julien M and Tsitsiklis, John N},
  journal={SIAM Journal on Control and Optimization},
  volume={48},
  number={8},
  pages={5214--5240},
  year={2010},
  publisher={SIAM}
}

@article{nugent2024bridging,
  title={Bridging the gap between agent based models and continuous opinion dynamics},
  author={Nugent, Andrew and Gomes, Susana N and Wolfram, Marie-Therese},
  journal={Physica A: Statistical Mechanics and its Applications},
  pages={129886},
  year={2024},
  publisher={Elsevier}
}

@article{wang2017noisy,
  title={Noisy {H}egselmann-{K}rause systems: Phase transition and the 2R-conjecture},
  author={Wang, Chu and Li, Qianxiao and Chazelle, Bernard and others},
  journal={Journal of Statistical Physics},
  volume={166},
  number={5},
  pages={1209--1225},
  year={2017},
  publisher={Springer}
}

@article{during2015opinion,
  title={Opinion dynamics: inhomogeneous {B}oltzmann-type equations modelling opinion leadership and political segregation},
  author={D{\"u}ring, Bertram and Wolfram, Marie-Therese},
  journal={Proceedings of the Royal Society A: Mathematical, Physical and Engineering Sciences},
  volume={471},
  number={2182},
  pages={20150345},
  year={2015},
  publisher={The Royal Society Publishing}
}

@article{zhao2016bounded,
  title={Bounded confidence opinion dynamics with opinion leaders and environmental noises},
  author={Zhao, Yiyi and Zhang, Libin and Tang, Mingfeng and Kou, Gang},
  journal={Computers \& Operations Research},
  volume={74},
  pages={205--213},
  year={2016},
  publisher={Elsevier}
}

@article{amblard2004role,
  title={The role of network topology on extremism propagation with the relative agreement opinion dynamics},
  author={Amblard, Fr{\'e}d{\'e}ric and Deffuant, Guillaume},
  journal={Physica A: Statistical Mechanics and its Applications},
  volume={343},
  pages={725--738},
  year={2004},
  publisher={Elsevier}
}

@article{gabbay2007effects,
  title={The effects of nonlinear interactions and network structure in small group opinion dynamics},
  author={Gabbay, Michael},
  journal={Physica A: Statistical Mechanics and its Applications},
  volume={378},
  number={1},
  pages={118--126},
  year={2007},
  publisher={Elsevier}
}

@article{kan2023adaptive,
  title={An adaptive bounded-confidence model of opinion dynamics on networks},
  author={Kan, Unchitta and Feng, Michelle and Porter, Mason A},
  journal={Journal of Complex Networks},
  volume={11},
  number={1},
  pages={415--444},
  year={2023},
  publisher={OUP}
}

@article{nugent2023evolving,
  title={On evolving network models and their influence on opinion formation},
  author={Nugent, Andrew and Gomes, Susana N and Wolfram, Marie-Therese},
  journal={Physica D: Nonlinear Phenomena},
  volume={456},
  pages={133914},
  year={2023},
  publisher={Elsevier}
}

@article{jacobmeier2005multidimensional,
  title={Multidimensional consensus model on a Barab{\'a}si--Albert network},
  author={Jacobmeier, Dirk},
  journal={International Journal of Modern Physics C},
  volume={16},
  number={04},
  pages={633--646},
  year={2005},
  publisher={World Scientific}
}

@article{fortunato2005vector,
  title={Vector opinion dynamics in a bounded confidence consensus model},
  author={Fortunato, Santo and Latora, Vito and Pluchino, Alessandro and Rapisarda, Andrea},
  journal={International Journal of Modern Physics C},
  volume={16},
  number={10},
  pages={1535--1551},
  year={2005},
  publisher={World Scientific}
}

@article{rodriguez2016collective,
  title={Collective dynamics of belief evolution under cognitive coherence and social conformity},
  author={Rodriguez, Nathaniel and Bollen, Johan and Ahn, Yong-Yeol},
  journal={PLoS one},
  volume={11},
  number={11},
  pages={e0165910},
  year={2016},
  publisher={Public Library of Science San Francisco, CA USA}
}

@article{steiglechner2024noise,
  title={Noise and opinion dynamics: how ambiguity promotes pro-majority consensus in the presence of confirmation bias},
  author={Steiglechner, Peter and Keijzer, Marijn A and E. Smaldino, Paul and Moser, Deyshawn and Merico, Agostino},
  journal={Royal Society Open Science},
  volume={11},
  number={4},
  pages={231071},
  year={2024},
  publisher={The Royal Society}
}

@article{wehlitz2024approximating,
  title={Approximating particle-based clustering dynamics by stochastic {PDE}s},
  author={Wehlitz, Nathalie and Sadeghi, Mohsen and Montefusco, Alberto and Sch{\"u}tte, Christof and Pavliotis, Grigorios A and Winkelmann, Stefanie},
  journal={arXiv preprint arXiv:2407.18952},
  year={2024}
}

@article{cahill2024modified,
  title={A modified Hegselmann-Krause model for interacting voters and political parties},
  author={Cahill, Patrick H and Gottwald, Georg A},
  journal={arXiv preprint arXiv:2410.13378},
  year={2024}
}

@book{inaba2017age,
  title={Age-structured population dynamics in demography and epidemiology},
  author={Inaba, Hisashi},
  year={2017},
  publisher={Springer}
}

@article{de2023approximating,
  title={Approximating reproduction numbers: a general numerical method for age-structured models},
  author={De Reggi, Simone and Scarabel, Francesca and Vermiglio, Rossana},
  journal={arXiv preprint arXiv:2312.13477},
  year={2023}
}

@article{m1925applications,
  title={Applications of mathematics to medical problems},
  author={M'kendrick, AG},
  journal={Proceedings of the Edinburgh Mathematical Society},
  volume={44},
  pages={98--130},
  year={1925},
  publisher={Cambridge University Press}
}

@book{perthame2006transport,
  title={Transport equations in biology},
  author={Perthame, Beno{\^\i}t},
  year={2006},
  publisher={Springer Science \& Business Media}
}

@book{leveque2002finite,
  title={Finite volume methods for hyperbolic problems},
  author={LeVeque, Randall J},
  volume={31},
  year={2002},
  publisher={Cambridge university press}
}

@book{ikeda2014stochastic,
  title={Stochastic differential equations and diffusion processes},
  author={Ikeda, Nobuyuki and Watanabe, Shinzo},
  volume={24},
  year={2014},
  publisher={Elsevier}
}

@misc{Mossong2017data,
  author = {Joël Mossong and Niel Hens and Mark Jit and Philippe Beutels and Kari Auranen and Rafael Mikolajczyk and Marco Massari and Stefania Salmaso and Gianpaolo Scalia Tomba and Jacco Wallinga and Janneke Heijne and Malgorzata Sadkowska-Todys and Magdalena Rosinska and W. John Edmunds},
  title = {{POLYMOD social contact data (1.1)}},
  year = {2017},
  publisher = {Zenodo},
  doi = {10.5281/zenodo.3746312},
  note = {[Data set]}
}

@article{mossong2008social,
  title={Social contacts and mixing patterns relevant to the spread of infectious diseases},
  author={Mossong, Jo{\"e}l and Hens, Niel and Jit, Mark and Beutels, Philippe and Auranen, Kari and Mikolajczyk, Rafael and Massari, Marco and Salmaso, Stefania and Tomba, Gianpaolo Scalia and Wallinga, Jacco and others},
  journal={PLoS medicine},
  volume={5},
  number={3},
  pages={e74},
  year={2008},
  publisher={Public Library of Science}
}
\bibliographystyle{agsm}

\clearpage
\appendix

\section{Proofs} \label{Appendix: Proofs}

\begin{lemma} \label{Lemma: underline well defined}
    For any $u \in W_2^{2,1}(U_A)$ the function $\underline{u}:[0,A]\rightarrow H^1(U)$ defined by $\underline{u}(a) = u(a,\cdot)$ is an element of $L^2(0,A;H^1(U))$. In addition 
    \begin{align} 
        \|\underline{u}\|_{L^2(0,A;H^1(U))} \leq \|u\|_{W_2^{2,1}(U_A)} \,.
    \end{align}
\end{lemma}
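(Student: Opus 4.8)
The plan is to reduce the estimate to a one-line application of Tonelli's theorem, once two measure-theoretic points have been dispatched: that almost every slice $u(a,\cdot)$ genuinely lies in $H^1(U)$ with the weak $x$-derivative one expects, and that $\underline{u}$ is strongly measurable as an $H^1(U)$-valued map (so that it is a legitimate element of the Bochner space). Granting these, the inequality is immediate, since the $W_2^{2,1}(U_A)$-norm contains exactly the two terms appearing in $\|\underline{u}\|_{L^2(0,A;H^1(U))}$ together with two additional nonnegative terms.

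First I would identify the weak derivative of each slice, showing that for almost every $a\in(0,A)$ the function $u(a,\cdot)$ has weak $x$-derivative in $L^2(U)$ equal to $(D_x u)(a,\cdot)$. The idea is to insert product test functions $\zeta(a,x)=\alpha(a)\,\beta(x)$, with $\alpha\in C_c^\infty(0,A)$ and $\beta\in C_c^\infty(U)$, into the defining identity for $D_x u$. Separating the $a$- and $x$-integrals by Fubini and using that $\alpha$ is arbitrary yields, for each fixed $\beta$, the weak-derivative relation in $x$ for almost every $a$. Running this over a countable family of $\beta$ that is dense in the relevant sense produces a single null set outside of which $u(a,\cdot)\in H^1(U)$ with weak derivative $(D_x u)(a,\cdot)$, so that $\|\underline{u}(a)\|_{H^1(U)}^2=\|u(a,\cdot)\|_{L^2(U)}^2+\|(D_x u)(a,\cdot)\|_{L^2(U)}^2$.

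Next I would address measurability. Since $L^2(U_A)$ is canonically isometric to $L^2(0,A;L^2(U))$, the slice maps $a\mapsto u(a,\cdot)$ and $a\mapsto (D_x u)(a,\cdot)$ are strongly measurable into $L^2(U)$. The embedding $H^1(U)\hookrightarrow L^2(U)\times L^2(U)$ given by $\nu\mapsto(\nu,D_x\nu)$ is an isometry onto a closed subspace, so strong measurability of both components gives strong measurability of $\underline{u}$ into $H^1(U)$, and in particular $a\mapsto\|\underline{u}(a)\|_{H^1(U)}$ is measurable. Tonelli's theorem then gives
\[
\int_0^A \|\underline{u}(a)\|_{H^1(U)}^2 \, da = \int_0^A \int_U |u(a,x)|^2\,dx\,da + \int_0^A \int_U |D_x u(a,x)|^2\,dx\,da = \|u\|_2^2 + \|D_x u\|_2^2 .
\]
Both terms are bounded by $\|u\|_{W_2^{2,1}(U_A)}^2$, so the integral is finite and $\underline{u}\in L^2(0,A;H^1(U))$; discarding the nonnegative terms $\|D_x^2 u\|_2^2$ and $\|D_a u\|_2^2$ from the definition of the $W_2^{2,1}(U_A)$-norm and taking square roots yields the claimed estimate.

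I expect the slicewise identification of the weak derivative in the first step to be the only genuine obstacle, as it requires the product-test-function argument together with a careful density and null-set reduction to obtain a common exceptional set; the measurability follows routinely from the $L^2(U_A)\cong L^2(0,A;L^2(U))$ identification, and the final estimate is essentially trivial.
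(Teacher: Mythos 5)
Your proof is correct and follows essentially the same route as the paper's: identify the slice-wise weak derivative, apply Tonelli to rewrite the Bochner norm as $\|u\|_2^2+\|D_x u\|_2^2$, and bound this by the $W_2^{2,1}(U_A)$-norm, with you supplying the measurability and product-test-function details that the paper takes for granted. Your final step (discarding the nonnegative terms $\|D_x^2 u\|_2^2$ and $\|D_a u\|_2^2$ before taking the square root) is in fact the cleaner one: the paper instead first splits the square root by subadditivity, and its resulting intermediate bound $\|u\|_2+\|D_x u\|_2\leq\|u\|_{W_2^{2,1}(U_A)}$ is not valid in general, whereas your direct monotonicity argument gives the stated estimate without that detour.
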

\begin{proof}
    As $u \in W_2^{2,1}(U_A)$, the required weak derivative $D_x \underline{u}(a)$ exists for almost all $a$. We then compute directly
    \begin{align*}
        \| \underline{u} \|_{L^2(0,A;H^1(U))} 
        &= \bigg( \int_0^A \|\underline{u}(a)\|_{H^1(U)}^2 \, da \bigg)^{1/2} \\
        &= \bigg( \int_0^A \|\underline{u}(a)\|_{L^2(U)}^2 
        + \|D_x \underline{u}(a)\|_{L^2(U)}^2 \, da \bigg)^{1/2} \\
        &= \bigg( \int_0^A \int_U u(a,x)^2 + \big(D_x u(a,x)\big)^2 \, dx \, da \bigg)^{1/2} \\
        &\leq \bigg( \int_0^A \int_U u(a,x)^2 \, dx \, da \bigg)^{1/2} + \bigg( \int_0^A \int_U \big(D_x u(a,x)\big)^2 \, dx \, da \bigg)^{1/2} \\
        &\leq \|u\|_{W_2^{2,1}(U_A)} 
    \end{align*}
    Hence $\underline{u} \in L^2(0,A;H^1(U))$ and we have shown \eqref{Eqn: bound on u in $L^2(0,T;H1(U))$}.
\end{proof}

\textbf{Proof of Proposition \ref{Prop: K weakly compact and weakly closed}}

\begin{proof}
    The set $K$ is a subset of 
    \begin{align*}
        K' = \{ \lambda \in L^\infty(U) : \| \lambda \|_{L^\infty(U)}\leq C \} \,,
    \end{align*}
    which itself is weakly compact. This is a consequence of the Banach-Alaoglu Theorem, see for example Theorem 3.14 and Corollary 3.15 in \cite{clarke2013functional}.

    Denote by $\langle \xi, u \rangle$ the pairing of $\xi \in L^1(U)$ with $u \in L^\infty(U)$. Let $\lambda^n$ be a sequence of elements of $K$ converging weakly in $L^\infty(U)$ to some $\lambda \in L^\infty(U)$. We wish to show that $\lambda \in K$. 
     
    Assume that $\lambda$ is not positive almost everywhere, then there exists some set $V \subset U$ with strictly positive Lebesgue measure on which $\lambda$ is strictly negative. Let $\chi_V$ denote the indicator function on this set, then $\chi_V \in L^1(U)$ and so $\langle \chi_V, \lambda^n \rangle \rightarrow \langle \chi_V, \lambda \rangle$. However for every $n \in \mathds{N}$ the integral $\langle \chi_V, \lambda^n \rangle \geq 0$,     since $\chi_V$ is positive and $\lambda^n$ is positive almost everywhere, while the limit $\langle \chi_V, \lambda \rangle < 0$ by construction. This is a contradiction, hence $\lambda \geq 0$ a.e. 
    
    As $\lambda \geq 0$ a.e. we have that 
    \begin{align*}
        \| \lambda \|_{L^1(U)} = \int_U |\lambda(x)| \, dx =  \int_U \lambda(x) \, dx = \langle \mathbf{1}, \lambda \rangle
    \end{align*}
    where $\mathbf{1}\in L^1(U)$ denotes the constant function with value $1$. Since $\lambda^n$ converges weakly to $\lambda$, 
    \begin{align*}
        \langle \mathbf{1}, \lambda \rangle 
        = \lim_{n\rightarrow\infty} \langle \mathbf{1}, \lambda^n \rangle
        = \lim_{n\rightarrow\infty} \| \lambda^n \|_{L^1(U)}
        = 1 \,.
    \end{align*}
    The final property required of $\lambda$ is $\| \lambda \|_{L^\infty(U)}\leq C$. This follows from the fact that the unit ball in $L^\infty(U)$, and furthermore any dilation of the unit ball, is weakly closed. Thus $\lambda$ satisfies all requirements to be an element of $K$ and so $K$ is weakly closed. 
\end{proof}

\textbf{Proof of Lemma \ref{Lemma: Lipschitz continuity of Lambda}}
\begin{proof}
    Recall that $\varphi(y-x):=\phi(y-x)\,(y-x)$. Since $\phi\in C^2(U)$ both $\varphi$ and $\varphi'$ are Lipschitz continuous with constants denoted $\ell_1$ and $\ell_2$ respectively. 
    
    Now for any $\lambda \in K$ and any $x,z\in \overline{U}$ consider,
    \begin{align*}
        |\Lambda(x) - \Lambda(z)| 
        &= \bigg| \int_U \varphi(y-x)\,\lambda(y)\,dy - \int_U \varphi(y-z)\,\lambda(y)\,dy \bigg| \,,\\
        &\leq \int_U |\varphi(y-x) - \varphi(y-z)|\,\lambda(y)\,dy \,,\\
        &\leq \int_U \ell_1 |x-z|\,\lambda(y)\,dy \,,\\
        &= \ell_1 |x-z|\,\int_U \lambda(y)\,dy \,,\\
        &= \ell_1 |x-z| \,.
    \end{align*}
    An identical arguments holds for $\Lambda'$ with Lipschitz constant $\ell_2$. 
\end{proof}

\textbf{Proof of Lemma \ref{Lemma: Pointwise plus Lipschitz implies L infty}}
\begin{proof}
    We show that $f^n$ converges to $f$ uniformly, from which convergence in $L^\infty(U)$ follows. Fix any $\varepsilon > 0$. For each $y \in \overline{U}$ let $B(y,\frac{\varepsilon}{4L})$ be the open ball of radius $\frac{\varepsilon}{4L}$ centred at $y$. As $\overline{U}$ is compact there exists a finite collection of points $y_1,\dots,y_m$ such that the union $\cup_{i=1}^m B(y_i,\frac{\varepsilon}{4L})$ covers $\overline{U}$. 

    For each $i=1,\dots,m$ take $x\in B(y_i,\frac{\varepsilon}{4L})$ and consider
    \begin{align*}
        |f^n(x) - f(x)| 
        &\leq |f^n(x) - f^n(y_i)| + |f^n(y_i) - f(y_i)| + |f(y_i) - f(x)| \\
        &\leq L|x - y_i| + |f^n(y_i) - f(y_i)| + L|y_i - x| \\
        &\leq \frac{\varepsilon}{2} + |f^n(y_i) - f(y_i)| 
    \end{align*}
    Since $f^n(y_i)$ converges to $f(y_i)$ there exists $N_i \in \mathds{N}$ such that for all $n\geq N_i$, $|f^n(y_i) - f(y_i)| \leq \frac{\varepsilon}{2}$. So for $n \geq N_i$ we have $|f^n(x) - f(x)| \leq \varepsilon$ for all $x\in B(y_i,\frac{\varepsilon}{4L})$.

    Define $N = \max_{i=1,\dots,N}$. Then for $n\geq N$ we have $|f^n(x) - f(x)| \leq \varepsilon$ for all $x\in \cup_{i=1}^m B(y_i,\frac{\varepsilon}{4L}) \supset \overline{U}$. Hence $f^n$ converges to $f$ uniformly in $\overline{U}$ and therefore also in $L^\infty(U)$. 
\end{proof}

\textbf{Proof of Proposition \ref{Prop: F weakly continuous}}
\begin{proof}
    Fix some $\lambda \in K$ and consider a sequence $\lambda^n \in K$ converging weakly to $\lambda$ in $L^\infty(U)$, meaning that for any $\xi \in L^1(U)$, 
    \begin{align*}
        \int_U \lambda^n(y) \, \xi(y) \, dy \rightarrow \int_U \lambda(y) \, \xi(y) \, dy \,.
    \end{align*}
    Therefore, by fixing some $x\in \overline{U}$ and taking $\xi(y) = \phi(y-x)\,(y-x)$ and $\xi(y) = -\phi'(y-x)\,(y-x) - \phi(y-x)$, we have that $\Lambda^n(x)$ and $(\Lambda^n)'(x)$ converge pointwise to $\Lambda(x)$ and $\Lambda'(x)$ respectively everywhere in $\overline{U}$. Applying Lemmas \ref{Lemma: Lipschitz continuity of Lambda} and \ref{Lemma: Pointwise plus Lipschitz implies L infty} gives that $\Lambda^n(x)$ and $(\Lambda^n)'(x)$ converge to $\Lambda(x)$ and $\Lambda'(x)$ respectively in $L^\infty(U)$. We now show that the corresponding solutions to \eqref{Eqn: lambda steady state} converge. 

    For each $n \in \mathds{N}$ let $\rho^n \in W_2^{2,1}(U_A)$ denote the solution to \eqref{Eqn: lambda steady state} using interaction density $\lambda^n$ and similarly let $\rho\in W_2^{2,1}(U_A)$ be the solution using $\lambda$. We will next show that $\rho^n$ converges to $\rho$ in $W_2^{2,1}(U_A)$. Define $q^n = \rho^n - \rho \in W_2^{2,1}(U_A)$. Combining the corresponding versions of \eqref{Eqn: lambda steady state} gives that $q^n$ is a weak solution to the following initial-boundary value problem:
    \begin{subequations} \label{Eqn: PDE for q^n}
    \begin{align}
        \tau \partial_a q^n(a,x) + \partial_x \Big( q^n(a,x) \Lambda^n(x) \Big) - \, \frac{\sigma^2}{2} \partial_{x^2} q^n(a,x) &= -\partial_x \bigg( \rho(a,x) \Big( \Lambda^n(x) - \Lambda(x) \Big) \bigg) \,, \\
        q^n(0,x) &= 0 \,,\\
        q^n(a,1) \, \Lambda^n(1) - \, \frac{\sigma^2}{2} \partial_{x} q^n(a,1) &= -\rho(a,1) \Big( \Lambda^n(1) - \Lambda(1) \Big) \,,\\
        q^n(a,-1) \, \Lambda^n(-1) - \, \frac{\sigma^2}{2} \partial_{x} q^n(a,-1) &= -\rho(a,-1) \Big( \Lambda^n(-1) - \Lambda(-1) \Big) \,.
    \end{align}    
    \end{subequations}
    We wish to apply Theorem 9.1 from Chapter IV Section 9 of \cite{ladyzhenskaia1968linear} to obtain a bound on $\| q^n \|_{W_2^{2,1}(U_A)}$ of the form 
    \begin{align*}
        \| q^n \|_{W_2^{2,1}(U_A)} \leq c_3 \Big( \| f^n \|_2 + \| \Phi^n \|_{W_2^{3/2,3/4}(S_A)} + A^{-3/4} \| \Phi^n \|_{L^2(S_A)} \Big) \,,
    \end{align*}
    where
    \begin{align*}
        f^n(a,x) &= -\partial_x \bigg( \frac{1}{\tau}\rho(a,x) \Big( \Lambda^n(x) - \Lambda(x) \Big) \bigg) \,, \\
        \Phi^n(a,x) &= -\rho(a,x) \Big( \Lambda^n(x) - \Lambda(x) \Big) \,.
    \end{align*}
    First consider $f^n$. Using H\"older's inequality we have the following bound,
    \begin{align}
        \| f^n \|_2 &= \frac{1}{\tau}\,\bigg\| \, \partial_x \rho(a,x) \Big( \Lambda^n(x) - \Lambda(x) \Big) +  \rho(a,x) \Big( (\Lambda^n)'(x) - \Lambda'(x) \Big) \bigg\|_2 \nonumber\\
        &\leq \frac{1}{\tau}\,\big\| \, \partial_x \rho \big\|_2 \, \big\| \Lambda^n - \Lambda \big\|_\infty +\frac{1}{\tau}\, \big\| \rho \big\|_2 \, \big\| (\Lambda^n)' - \Lambda' \big\|_\infty \, \\
        &\leq \frac{1}{\tau}\,\big\| \rho \big\|_{W_2^{2,1}(U_A)} \, \Big( \big\| \Lambda^n - \Lambda \big\|_\infty + \big\| (\Lambda^n)' - \Lambda' \big\|_\infty \Big) \,.\label{Eqn: bound on f^n for q^n}
    \end{align}
    We have already established that $\Lambda^n$ and $(\Lambda^n)'$ converge to $\Lambda$ and $\Lambda'$ respectively in $L^\infty(U)$ and therefore in $L^\infty(U_A)$ (as they are constant in $a$). Moreover, $\big\| \rho \big\|_{W_2^{2,1}(U_A)}$ is a finite constant since the solution $\rho \in W_2^{2,1}(U_A)$. Hence $\| f^n \|_2$ is finite and converges to zero as $n\rightarrow\infty$.
    
    Next we require that $\Phi^n \in W_2^{3/2,3/4}(S_A)$. Applying Lemma 3.4 from Chapter 2 Section 3 of \cite{ladyzhenskaia1968linear} gives the existence of a constant $c_4$ such that for any $u \in W^{2,1}_2(U_A)$ 
    \begin{align*}
        \| u \|_{W^{3/2,3/4}_2(S_A)} \leq c_4 \| u \|_{W^{2,1}_2(U_A)} \,.
    \end{align*}
    Since $\Lambda^n$ and $\Lambda$ are continuous and $\rho \in W_2^{2,1}(U_A)$, $\Phi^n \in W_2^{2,1}(U_A)$ for all $n \in \mathds{N}$. Hence $\Phi^n \in W_2^{3/2,3/4}(S_A)$ and, 
    \begin{align*}
        \| \Phi^n\|_{W^{3/2,3/4}_2(S_A)} 
        &\leq c_4 \| \Phi^n \|_{W^{2,1}_2(U_A)} \,,\\
        &\leq c_4 \bigg( \| \Phi^n \|_2 + \| D_a\Phi^n \|_2 + \| D_x \Phi^n \|_2 + \| D_{x^2} \Phi^n \|_2 \bigg) \,, \\
        &\leq c_5 \| \rho \|_{W^{2,1}_2(U_A)} \bigg( 
        \| \Lambda^n - \Lambda \|_\infty + 
        \| (\Lambda^n)' - \Lambda' \|_\infty + 
        \| (\Lambda^n)'' - \Lambda'' \|_\infty \bigg) \,, 
    \end{align*}
    with the final inequality obtained by applying H\"older's inequality to each component of $\| \Phi^n \|_{W^{2,1}_2(U_A)}$ and using $\| \rho \|_{W^{2,1}_2(U_A)}$ as an upper bound for the $L^2$ norm of $\rho$ and its weak derivatives. Using an identical argument as that for $\Lambda^n$ and $(\Lambda^n)'$, $(\Lambda^n)''$ converges to $\Lambda''$ in $L^\infty(U)$. Hence $\| \Phi^n\|_{W^{3/2,3/4}_2(S_A)} \rightarrow 0$. This argument also shows that $\| \Phi^n \|_{L^2(S_A)} \rightarrow 0 $. 
    
    Hence we can apply Theorem 9.1 from Chapter IV Section 9 of \cite{ladyzhenskaia1968linear} to give the existence of a unique solution $q^n \in W_2^{2,1}$ to \eqref{Eqn: PDE for q^n}. Moreover there exists a constant $c_3$ such that,
    \begin{align} \label{Eqn: bound on q^n}
        \| q^n \|_{W_2^{2,1}(U_A)} \leq c_3 \Big( \| f^n \|_2 + \| \Phi^n \|_{W_2^{3/2,3/4}(S_A)} + A^{-3/4} \| \Phi^n \|_{L^2(S_A)} \Big) \,,
    \end{align}
    so $\| q^n \|_{W_2^{2,1}(U_A)}\rightarrow0$ as $n\rightarrow\infty$, meaning that $\rho^n$ converges to $\rho$ in $W_2^{2,1}$. Applying Lemma \ref{Lemma: Bounding lambda in L infinity with the norm of rho in Sob} to $\rho^n - \rho$ we then have that 
    \begin{align*}
        \big\| \mathcal{F}(\lambda^n) - \mathcal{F}(\lambda) \big\|_{L^\infty(U)} 
        &\leq c_2 \sqrt{A} \varM_{\max}\, \| \rho^n - \rho \|_{W_2^{2,1}(U_A)} \rightarrow 0 \,.
    \end{align*}
    Hence $\mathcal{F}(\lambda^n)$ converges (in norm) to $\mathcal{F}(\lambda)$ in $L^\infty(U)$, and so also converges weakly in $L^\infty(U)$. Thus the mapping $\mathcal{F}:K\rightarrow K$ is continuous in the weak topology on $L^\infty(U)$. 
\end{proof}

\textbf{Proof of Proposition \ref{Prop: F Lipschitz continuous}}
\begin{proof}
    Consider $\lambda_1, \lambda_2 \in K$ and let $\rho_1$ and $\rho_2$ be the corresponding solutions to \eqref{Eqn: lambda steady state}. Applying Lemma \ref{Lemma: Bounding lambda in L infinity with the norm of rho in Sob} to $\rho_1 - \rho_2$ we have that 
    \begin{align*}
        \big\| \mathcal{F}(\lambda_1) - \mathcal{F}(\lambda_2) \big\|_{L^\infty(U)} 
        &\leq c_2 \sqrt{A} \varM_{\max} \, \| \rho_1 - \rho_2 \|_{W_2^{2,1}(U_A)} \,.
    \end{align*} 
    We wish to bound the terms on the right hand side so that we may compare this directly to $\big\| \lambda_1 - \lambda_2 \big\|_{L^\infty(U)}$.
    
    Following the same approach as in the proof of Proposition \ref{Prop: F weakly continuous}, the equivalent of \eqref{Eqn: bound on q^n} gives 
    \begin{align} \label{Eqn: bound on rho1 - rho2}
        \| \rho_1 - \rho_2 \|_{W_2^{2,1}(U_A)} \leq c_3 \Big( \| f \|_2 + \| \Phi \|_{W_2^{3/2,3/4}(S_A)} + A^{-3/4} \| \Phi \|_{L^2(S_A)} \Big) 
    \end{align}
    where
    \begin{align*}
        f(a,x) &= -\partial_x \bigg( \rho_1(a,x) \Big( \Lambda_1(x) - \Lambda_2(x) \Big) \bigg) \,, \\
        \Phi(a,x) &= -\rho_1(a,x) \Big( \Lambda_1(x) - \Lambda_2(x) \Big) \,.
    \end{align*}
    Recall from the proof of Proposition \ref{Prop: F weakly continuous} that each of the norms appearing in the right hand side of \eqref{Eqn: bound on rho1 - rho2} can be bounded by a constant multiplied by 
    \begin{align*}
        \max\{1,\tau^{-1}\}\,\| \rho_1 \|_{W^{2,1}_2(U_A)} \bigg( 
        \| \Lambda_1 - \Lambda_2 \|_\infty + 
        \| (\Lambda_1)' - (\Lambda_2)' \|_\infty + 
        \| (\Lambda_1)'' - (\Lambda_2)'' \|_\infty \bigg) \,.
    \end{align*}
    As $|\varphi|$ is bounded above by 2 we have
    \begin{align*}
        \big\| \Lambda_1 - \Lambda_2 \big\|_\infty  
        &= \bigg| \int_U \varphi(y-x)\,\lambda_1(y)\,dy - \int_U \varphi(y-x)\,\lambda_2(y)\,dy \bigg| \,,\\
        &\leq \int_U |\varphi(y-x)|\,|\lambda_1(y) - \lambda_2(y)|\,dy \,,\\
        &\leq 2 \int_U |\lambda_1(y) - \lambda_2(y)| \,dy \,,\\
        &\leq 4 \, \big\|\lambda_1 - \lambda_2\big\|_{L^\infty(U)} \,.
    \end{align*}
    Similarly $\big\| \Lambda_1' - \Lambda_2' \big\|_\infty \leq c_6 \, \big\|\lambda_1 - \lambda_2\big\|_{L^\infty(U)} $ and $\big\| \Lambda_1'' - \Lambda_2'' \big\|_\infty \leq c_7 \, \big\|\lambda_1 - \lambda_2\big\|_{L^\infty(U)} $ for some constants $c_6,c_7$ depending only upon $U$ and $\phi$ (and its derivatives). 
        
    As the bound \eqref{Eqn: bound on rho in Sob space} given by 
    \begin{align*}
        \|\rho\|_{W_2^{2,1}(U_A)} \leq c_1 \, \| \mu \|_{L^2(U)} \,,
    \end{align*}
    is independent of $\lambda$ we can therefore bound each of the norms appearing in the right hand side of \eqref{Eqn: bound on rho1 - rho2} by a constant multiplied by 
    \[
    \| \mu \|_{L^2(U)} \, \big\|\lambda_1 - \lambda_2\big\|_{L^\infty(U)} \,.
    \]
    Therefore there exists a constant $c_8$, depending upon $U$, $\phi$, $A$ and $\tau$, such that 
    \begin{align*} 
        \big\| \mathcal{F}(\lambda_1) - \mathcal{F}(\lambda_2) \big\|_{L^\infty(U)} 
        \leq  c_8 \sqrt{A} \varM_{\max} \, \big(1+ A^{-3/4} \big)\, \big\|\lambda_1 - \lambda_2\big\|_{L^\infty(U)} \,.
    \end{align*}
    Hence we see that $\mathcal{F}$ is Lipschitz continuous and satisfies \eqref{Eqn: Lipschitz continuity of F}. 
\end{proof}

\textbf{Proof of Proposition \ref{Prop: Correspondence with MK}}
\begin{proof}
    From \eqref{Eqn: construction of MK solution} we have the following 
    \begin{align*}
        \partial_t \rho(t,a,x) 
        &= \partial_t q(t,a,x) \, \pi(a) \\
        &= -\Big( \tau \partial_a q(t,a,x) + \partial_x F[q](t,a,x) \Big) \, \pi(a) \\
        \partial_a \rho(t,a,x) 
        &= \partial_a q(t,a,x) \, \pi(a)  + q(t,a,x) \, \partial_a \pi(a) \\
        &= \partial_a q(t,a,x) \, \pi(a)  - q(t,a,x) \, d(a) \, \pi(a) \\
        &= \Big( \partial_a q(t,a,x)  - q(t,a,x) \, d(a) \Big) \, \pi(a) \\
        \partial_x \rho(t,a,x) 
        &= \partial_x q(t,a,x) \, \pi(a) 
    \end{align*}
    Next we compare the flux term in \eqref{Eqn: PDE with MK ageing} with that in \eqref{Eqn: PDE with ageing}, 
    \begin{align*}
        & \Tilde{F}[\rho](t,a,x) \\
        \quad& = \rho(t,a,x) \int_U \varphi(y - x) \, \bigg( \int_0^\infty \varM(a,b) \, \rho(t,b,y) \, db \bigg) \, dy - \, \frac{\sigma^2}{2} \partial_{x}\rho(t,a,x) \,, \\
        & = q(t,a,x) \, \pi(a) \int_U \varphi(y - x) \, \bigg( \int_0^\infty \varM(a,b) \, q(t,b,y) \, \pi(b) \, db \bigg) \, dy - \, \frac{\sigma^2}{2} \partial_{x} q(t,a,x) \, \pi(a) \\
        & = \pi(a) \Bigg( q(t,a,x) \, \int_U \varphi(y - x) \, \bigg( \int_0^{1} \varM(a,b) \, q(t,b,y) \, \pi(b) \, db \bigg) \, dy - \, \frac{\sigma^2}{2} \partial_{x} q(t,a,x) \Bigg) \\
        & = \pi(a) \Bigg( q(t,a,x) \int_U \varphi(y - x) \, \bigg( \int_0^1 \varM(a,b) \, \pi(b) \, q(t,b,y) \, db \bigg) \, dy - \, \frac{\sigma^2}{2} \partial_{x} q(t,a,x) \Bigg) \\
        & = \pi(a) \Bigg( q(t,a,x) \int_U \varphi(y - x) \, \bigg( \int_0^1 \varM'(a,b) \, q(t,b,y) \, db \bigg) \, dy - \, \frac{\sigma^2}{2} \partial_{x} q(t,a,x) \Bigg) \\
        & = \pi(a) \, F[q](t,a,x) \,.
    \end{align*}
    Combining this with the derivatives above gives 
    \begin{align*}
        \partial_t \rho(t,a,x) + \tau \partial_a \rho(t,a,x) +  \, \partial_x \Tilde{F}[\rho](t,a,x) 
        &= \pi(a) \bigg( -\tau \partial_a q(t,a,x) - \partial_x F[q](t,a,x) \\
        &\hspace{1.5cm} + \tau \partial_a q(t,a,x) - \tau \, q(t,a,x) \, d(a) \\[0.4em]
        &\hspace{1.5cm} +  \, \partial_x F[q](t,a,x) \bigg) \\[0.2em]
        &= - \tau \, p(t,a,x) \, d(a) 
    \end{align*}
    hence we have 
    \begin{align*}
        \partial_t \rho(t,a,x) + \partial_a \rho(t,a,x) +  \, \partial_x \Tilde{F}[\rho](t,a,x) = -\tau \, \rho(t,a,x) \, d(a) 
    \end{align*}
    as required. In addition the correspondence $\Tilde{F}[\rho](t,a,x) = \pi(a) \, F[q](t,a/A,x) $ immediately ensures that the no-flux boundary conditions at $x=\pm1$ are met. Furthermore both equations share the same initial ($t=0$) condition. The last condition to check is the age-zero distribution. From \eqref{Eqn: construction of MK solution} we have
    \begin{align*}
        \rho(t,0,x) &= q(t,0,x) \, \pi(0) \\
        &= \mu(x) \, \pi(0) \, \int_U q(t,1,y) \, dy \\
        &= \mu(x) \, \pi(0) \,,
    \end{align*}
    since the age profile for $q$ is uniform. Since $\pi$ is stationary it maintains a constant population size and so satisfies
    \begin{align*}
        \pi(0) = \int_0^\infty \vard(a) \, \pi(a)  \, da \,,
    \end{align*}
    which gives
    \begin{align*}
        \rho(t,0,x) = \mu(x) \bigg( \int_0^\infty \vard(a) \, \pi(a) \, da \bigg) \,,
    \end{align*}
    as required. 
\end{proof}

\section{Numerical Schemes}  \label{Section: Numerical Schemes} 

\subsection{Full System} \label{Appendix: Numerics}

The overall approach is to appply Strang Splitting \citep{holden2010splitting} to allow separate numerical schemes for ageing and opinion formation. At each timestep we apply a half-step in age transport, a full step in opinion formation, then a further half-step in age transport. For ageing we solve
\begin{align*}
    \partial_t\rho + \tau \partial_a\rho  = 0 \,,
\end{align*}
while for opinion formation we solve
\begin{align*}
    \partial_t\rho + \partial_x \Bigg( \rho(t,a,x) \bigg( \int_U \int_0^A \varM(a,b) \, \varphi(y - x) \, \rho(t,b,y) \, db \, dy \, \bigg) \Bigg) - \, \frac{\sigma^2}{2} \partial_{x^2} \rho = 0 \,.
\end{align*}

In the following we assume $M\equiv1$ for simplicity, but note where an adaptation would be made to account for more complex age interaction kernels. 

Fix $J_t,J_x,J_a\geq 1$ the number of discretisation points for time, opinion and age respectively. 

Let $\Delta t = J_t^{-1}, \, \Delta x = 2J_x^{-1}$ and $\Delta a = J_a^{-1}$. Denote $x_i = i \, \Delta_x$ for $i=0,\dots,J_x$. 

\textbf{Half age step:}

Denote by $\mu^{J_x}$ the discretised version of the age zero distribution $\mu$ into $J_x$ opinion sections, given by 
\begin{align*}
    \mu^{J_x}_i = \int_{x_{i-1}}^{x_i} \mu(x) \, dx \,,
\end{align*}
for $i=1,\dots,J_x$.

Let $\kappa = \frac{\tau}{2}\frac{\Delta t}{\Delta a}$ and define the matrices $B^{1}\in\mathds{R}^{J_a \times J_a}$ and $B^{2}\in\mathds{R}^{J_x \times J_a}$ by 
\begin{align*}
    B^{1} = 
    \begin{pmatrix}
        0 & \kappa & 0 & 0 & 0 & 0\\
        0 & 1-\kappa & \kappa & \dots & 0 & 0\\
        0 & 0 & 0 & \dots & 0 & 0\\
        \vdots & \vdots & \vdots & \ddots & \vdots & \vdots \\
        0 & 0 & 0 & \dots & 1-\kappa & \kappa \\
        0 & 0 & 0 & \dots & 0 & 1-\kappa
    \end{pmatrix} 
    \quad \text{and} \quad
    B^{2} = 
    \begin{pmatrix}
        \mu^{J_x}_1 & 0 & 0 & 0 & 0 & 0\\
        \mu^{J_x}_2 & 0 & 0 & \dots & 0 & 0\\
        \mu^{J_x}_3 & 0 & 0 & \dots & 0 & 0\\
        \vdots & \vdots & \vdots & \ddots & \vdots & \vdots \\
        \mu^{J_x}_{{J_x}-1} & 0 & 0 & \dots & 0 & 0 \\
        \mu^{J_x}_{J_x} & 0 & 0 & \dots & 0 & 0
    \end{pmatrix}  \,,
\end{align*}
then we have that a half update step corresponds to 
\begin{equation*}
    S^a(\rho) = \rho B^{1} + \big( \mathbf{1}_{1\times {J_x}} \, \rho \, e_{J_a} \big) B^{2}
\end{equation*}
where $\mathbf{1}_{n\times m}$ is an $n\times m$ matrix of ones. The matrix $B^{1}$ handles the transport in age for ages strictly above zero, while $B^{2}$ sets the age-zero distribution. The factor $\big( \mathbf{1}_{1\times {J_x}} \, \rho \, e_{J_a} \big)$ simply sums the age-one distribution to ensure the total mass of $\rho$ is preserved. 

\textbf{Opinion Step:}

The opinion step applied a finite volumes scheme \cite{leveque2002finite}. Define the constant matrix $\Phi\in\mathds{R}^{({J_x}-1) \times {J_x}}$, which describes opinion interactions, by
\begin{equation*}
    \Phi_{ij} = \int_{x_{j-1}}^{x_j} \phi(y - x_i) \, (y - x_i) \, dy \,.
\end{equation*}
for $i=1,\dots,{J_x}-1$ and $j=1,\dots,{J_x}$. 

To help approximate the fluxes define $F^{1}, F^{2}\in\mathds{R}^{({J_x}-1) \times {J_x}}$ as follows 
\begin{align*}
    F^{1} = -\frac{1}{2}
    \begin{pmatrix}
        1 & 1 & 0 & \dots & 0 & 0\\
        0 & 1 & 1 & \dots & 0 & 0\\
        \vdots & \vdots & \vdots & \ddots & \vdots & \vdots \\
        0 & 0 & 0 & \dots & 1 & 0 \\
        0 & 0 & 0 & \dots & 1 & 1
    \end{pmatrix}
    \quad , \quad
    F^{2} = \frac{1}{\Delta x}
    \begin{pmatrix}
        -1 & 1 & 0 & \dots & 0 & 0\\
        0 & -1 & 1 & \dots & 0 & 0\\
        \vdots & \vdots & \vdots & \ddots & \vdots & \vdots \\
        0 & 0 & 0 & \dots & 1 & 0 \\
        0 & 0 & 0 & \dots & -1 & 1
    \end{pmatrix} \,.
\end{align*}
The matrix $F^{1}$ approximates the value of $\rho$ in the midpoint of each section (over which finite volumes are taken), while $F^{2}$ approximates the spatial derivative appearing in the flux. 

We now define the matrix $C\in\mathds{R}^{{J_x} \times ({J_x}-1)}$ that combines the fluxes (accounting for the no-flux boundary conditions)
\begin{align*}
    C = 
    \begin{pmatrix}
        1 & 0 & 0 & \dots & 0 & 0\\
        -1 & 1 & 0 & \dots & 0 & 0\\
        \vdots & \vdots & \vdots & \ddots & \vdots & \vdots \\
        0 & 0 & 0 & \dots & -1 & 1 \\
        0 & 0 & 0 & \dots & 0 & -1
    \end{pmatrix} \,.
\end{align*}

An opinion update step then corresponds to 
\begin{equation} \label{Eqn: Numerical opinion step}
    S^o(\rho) = \rho + \frac{\Delta t}{\Delta x} C \, \bigg( \Big(F^{1} \rho\Big)\Big( \Phi \rho \mathbf{1}_{M\times M}\Big) \,+\, \frac{\sigma^2}{2} F^{2} \rho \bigg) \,.
\end{equation}
If $\varM$ is not always $1$ then $\mathbf{1}_{{J_a}\times {J_a}}$ would be replaced with an approximation of $\varM$. 

\textbf{Complete Step}

A full step is given by 
\begin{equation*}
    \rho(t+\Delta t) = \Big( S^a \circ S^o \circ S^a \Big) \big( \rho(t) \big) \,,
\end{equation*}
where $\circ$ denotes function composition.

\subsection{For steady states} \label{Appendix: Numerics for steady states}

We now adapt the scheme for a known interaction density $\lambda$. Recall that in \eqref{Eqn: lambda steady state}, age plays the role of `time'. 

As for $\mu$, denote by $\lambda^{J_x}$ the discretised version of $\lambda$ into $J_x$ opinion sections, given by 
\begin{align*}
    \lambda^{J_x}_i = \int_{x_{i-1}}^{x_i} \lambda(x) \, dx \,,
\end{align*}
for $i=1,\dots,J_x$. We then replace \eqref{Eqn: Numerical opinion step} with
\begin{equation} 
    S^o(\rho) = \rho + \frac{\Delta a}{\Delta x} C \, \bigg( \Big(F^{1} \rho\Big)\star \Big( \Phi \lambda^{J_x} \Big) \,+\, \frac{\sigma^2}{2} F^{2} \rho \bigg) \,.
\end{equation}
where $\star$ denotes elementwise multiplication. 

This can be simplified by defining
\begin{align*}
    \Lambda^{J_x} = \text{diag}\Big( \Phi \lambda^{J_x} \Big) \,,
\end{align*}
which gives
\begin{align*}
    S^o(\rho) 
    &= \rho + \frac{\Delta a}{\tau\Delta x} C \, \bigg( \Big(\Lambda^{J_x} F^{1} \rho\Big)\,+\, \frac{\sigma^2}{2} F^{2} \rho \bigg) \,, \\
    &= \rho + \frac{\Delta a}{\tau\Delta x} C \, \bigg( \Lambda^{J_x} F^{1} \rho \,+\, \frac{\sigma^2}{2} F^{2} \rho \bigg) \,, \\
    &= \rho + \frac{\Delta a}{\tau\Delta x} C \, \bigg( \Lambda^{J_x} F^{1} \,+\, \frac{\sigma^2}{2} F^{2} \bigg) \,\rho \,, \\
    &= \Bigg( \text{Id} + \frac{\Delta a}{\tau\Delta x} C \, \bigg( \Lambda^{J_x} F^{1} \,+\, \frac{\sigma^2}{2} F^{2} \bigg) \Bigg) \,\rho \,.
\end{align*}
where $\text{Id}$ is the ${J_x}\times {J_x}$ identity matrix. Hence the approximation $\rho_k = \rho(k \, \Delta a)$ satisfies 
\begin{align*}
    \rho_{k} = \Bigg( \text{Id} + \frac{\Delta a}{\tau \Delta x} C \, \bigg( \Lambda^{J_x} F^{1} \,+\, \frac{\sigma^2}{2} F^{2} \bigg) \Bigg)^k \, \mu^{J_x} \,.
\end{align*}
For ease of notation define 
\begin{align*}
    \Omega = \frac{1}{\tau \Delta x} C \, \bigg( \Lambda^{J_x} F^{1} \,+\, \frac{\sigma^2}{2} F^{2} \bigg)
\end{align*}
then we have 
\begin{align*}
    \rho_k = \Bigg( \text{Id} + \frac{\Omega}{J_a} \Bigg)^k \mu \approx \Bigg( \text{Id} + \frac{\Omega}{J_a} \Bigg)^{a\, J_a} \mu \approx e^{a\Omega} \mu
\end{align*}
for $a\in[0,1]$. 

This expression can be used to rapidly solve \eqref{Eqn: lambda steady state} and therefore iterate a numerical approximation of $\mathcal{F}$ to find steady states. Due to the simpler nature of \eqref{Eqn: lambda steady state} and the corresponding numerical scheme, this is significantly faster and more stable than solving the PDE for a large $T$. In addition, since the PDE may exhibit periodic solutions, solving for large $T$ may never yield a steady state. 

\end{document}